\renewcommand{\algocf@captiontext}[2]{#1\algocf@typo. \AlCapFnt{}#2} 
\def\@algocf@capt@plain{top}
\renewcommand{\algocf@makecaption}[2]{%
  \addtolength{\hsize}{\algomargin}%
  \sbox\@tempboxa{\algocf@captiontext{#1}{#2}}%
  \ifdim\wd\@tempboxa >\hsize
    \hskip .5\algomargin%
    \parbox[t]{\hsize}{\algocf@captiontext{#1}{#2}}
  \else%
    \global\@minipagefalse%
    \hbox to\hsize{\box\@tempboxa}
  \fi%
  \addtolength{\hsize}{-\algomargin}%
}
\def\T{{ \mathrm{\scriptscriptstyle T} }}
\def\var{\textnormal{var}}
\def\cov{\textnormal{cov}}
\def\sumN{\sum_{i=1}^N}
\def\sumJ{\sum_{j=1}^J}
\def\asim{\overset{.}{\sim}}
\def\obs{\textnormal{obs}}
\def\digamma{\text{digamma}}
\def\-{\mbox{-}}
\def\SSTre{\textup{SSTre}}
\def\SSRes{\textup{SSRes}}
\def\MSTre{\textup{MSTre}}
\def\MSRes{\textup{MSRes}}
\def\FRT{\text{Fisher randomization test~}}
\def\pr{\textup{pr}}
\def\HN{H_0(\textup{Neyman})}
\def\HF{H_0(\textup{Fisher})}
\begin{document}

\jname{Biometrika}
\jyear{}
\jvol{}
\jnum{}
\copyrightinfo{\Copyright\ 2012 Biometrika Trust\goodbreak {\em Printed in Great Britain}}


\markboth{Peng Ding \and Tirthankar Dasgupta}{A randomization-based perspective of analysis of variance}

\title{A randomization-based perspective of analysis of variance: a test statistic robust to treatment effect heterogeneity}

\author{Peng Ding}
\affil{Department of Statistics, University of California, Berkeley, 425 Evans Hall, Berkeley, California 94720, U.S.A. \email{pengdingpku@berkeley.edu}}

\author{\and Tirthankar Dasgupta}
\affil{Department of Statistics and Biostatistics, Rutgers University, 110 Frelinghuysen Road, Piscataway, New Jersey 08901, U.S.A. \email{tirthankar.dasgupta@rutgers.edu}}

\maketitle

\begin{abstract}
Fisher randomization tests for Neyman's null hypothesis of no average treatment effects are considered in a finite population setting associated with completely randomized experiments with more than two treatments. The consequences of using the $F$ statistic to conduct such a test are examined both theoretically and computationally, and it is argued that under treatment effect heterogeneity, use of the $F$ statistic in the Fisher randomization test can severely inflate the type I error under Neyman's null hypothesis. An alternative test statistic is proposed, its asymptotic distributions under Fisher's and Neyman's null hypotheses are derived, and its advantages demonstrated.
\end{abstract}

\begin{keywords}
Additivity; Fisher randomization test; Null hypothesis; One-way layout
\end{keywords}

\vspace{-.5 in}

\section{Introduction}
\label{sec:introduction}
One-way analysis of variance \citep{fisher1925statistical} is arguably the most commonly used tool to analyze completely randomized experiments with more than two treatments. The standard $F$ test for testing equality of mean treatment effects can be justified either by assuming a linear additive super population model with identically and independently distributed normal error terms, or by using the asymptotic randomization distribution of the $F$ statistic. As observed by many experts, units in most real-life experiments are rarely random samples from a super population, making a finite population randomization-based perspective on inference important \citep[e.g.][]{rosenbaum2010design, imbens::2015book, dasgupta::2015}. Fisher randomization tests are useful tools for such inference, because they pertain to a finite population of units, and assess the statistical significance of treatment effects without making any assumptions about the underlying distribution of the outcome. 

In causal inference from finite population, two types of hypotheses are of interest: Fisher's sharp null hypothesis of no treatment effect on any experimental unit \citep{fisher::1935, rubin::1980}, and Neyman's null hypothesis of no average treatment effect \citep{neyman::1923, neyman::1935}. These hypotheses are equivalent without treatment effect heterogeneity \citep{ding2016randomization} or equivalently under the assumption of strict additivity of treatment effects, i.e., the same treatment effect for each unit \citep{kempthorne::1952}. In the context of a multi-treatment completely randomized experiment, Neyman's null hypothesis allows for treatment effect heterogeneity, which is weaker than Fisher's null hypothesis and is of greater interest. We find that the Fisher randomization test using the $F$ statistic can inflate the type I error under Neyman's null hypothesis, when the sample sizes and variances of the outcomes under different treatment levels are negatively associated. We propose to use the $X^2$ statistic defined in \S \ref{sec::newstatistic}, a statistic robust to treatment effect heterogeneity, because the resulting Fisher randomization test is exact under Fisher's null hypothesis and controls asymptotic type I error under Neyman's null hypothesis.


\section{Completely randomized experiment with $J$ treatments}
\label{sec:basic}

Consider a finite population of $N$ experimental units, each of which can be exposed to any one of $J$ treatments. 
Let $Y_i(j)$ denote the potential outcome \citep{neyman::1923} of unit $i$ when assigned to treatment level $j$ ($i=1,\ldots, N;j=1,\ldots, J).$ For two different treatment levels $j$ and $j'$, we define the unit-level treatment effect as
$\tau_i(j,j') = Y_i(j) - Y_i(j')$, 
and the population-level treatment effect as
\begin{equation*}
\tau(j,j') = N^{-1}\sumN \tau_i(j,j')  = N^{-1} \sumN \{  Y_i(j) - Y_i(j')\}  \equiv \bar{Y}_{\cdot}(j) - \bar{Y}_{\cdot}(j'), 
\end{equation*}
where $\bar{Y}_{\cdot}(j) = N^{-1} \sumN Y_i(j)$ is the average of the $N$ potential outcomes for treatment $j$.

The treatment assignment mechanism can be represented by the binary random variable $W_i(j),$ which equals $1$ if the $i$th unit is assigned to treatment $j$, and $0$ otherwise. Equivalently, it can be represented by the discrete random variable $W_i = \sumJ j W_i(j) \in \{ 1,\ldots,J\}$, the treatment received by unit $i$.
Let $  (W_1, \ldots, W_N)$ be the treatment assignment vector, and let $  (w_1,\ldots, w_N)$ denote its realization. For the $N=\sumJ N_j$ units, $(N_1,\ldots, N_J)$ are assigned at random to treatments $(1,\ldots, J)$ respectively, the treatment assignment mechanism satisfies
$\pr\{  (W_1, \ldots, W_N) =   (w_1,\ldots, w_N) \} = \prod_{j=1}^J N_j!/N!$
if $\sumN W_i(j) = N_j$, and $0$ otherwise. The observed outcomes are deterministic functions of the treatment received and the potential outcomes, given by
$ Y_i^\obs = \sumJ W_i(j) Y_i(j) \ (i=1, \ldots, N).$

\section{The Fisher randomization test under the sharp null hypothesis}
\label{sec:FRT}

\citet{fisher::1935} was interested in testing the following sharp null hypothesis of zero individual treatment effects:
\begin{equation*}
\HF: Y_i(1)   = \cdots = Y_i(J), \quad (i=1,\ldots, N).  
\end{equation*}
Under $\HF$, all the $J$ potential outcomes $Y_i(1), \ldots, Y_i(J)$ are equal to the observed outcome $Y_i^\obs$, for all units $i = 1, \ldots, N$. Thus any possible realization of the treatment assignment vector would generate the same vector of observed outcomes. This means, under $\HF$ and given any realization $(W_1,\ldots, W_N) = (w_1, \ldots, w_N)$, the observed outcomes are fixed. Consequently, the randomization distribution or null distribution of any test statistic, which is a function of the observed outcomes and treatment assignment vector, is its distribution over all possible realizations of the treatment assignment. The $p$-value is the tail probability measuring the extremeness of the test statistic with respect to its randomization distribution.
Computationally, we can enumerate or simulate a subset of all possible randomizations to obtain this randomization distribution of any test statistic and thus perform the Fisher randomization test \citep{fisher::1935, imbens::2015book}. \citet{fisher1925statistical} suggested using the $F$ statistic to test the departure from $\HF$.
Define $\bar{Y}_{\cdot}^\obs(j) = N_j^{-1} \sum_{i=1}^N W_i(j)Y_i^\obs $ as the sample average of the observed outcomes within treatment level $j$, and $\bar{Y}_{\cdot}^\obs = N^{-1} \sum_{i=1}^N Y_i^\obs $ as the sample average of all the observed outcomes.
Define $ s^2_{\obs}(j) =   ( N_j - 1 )^{-1}   \sumN W_i(j)  \{ Y_i^\obs - \bar{Y}_{\cdot}^\obs (j) \}^2 $ and $s^2_\obs = (N-1)^{-1} \sumN (Y_i^\obs - \bar{Y}_{\cdot}^\obs)^2 $ as the corresponding sample variances with divisors $N_j-1$ and $N-1$, respectively.
Let 
\begin{equation*}
\SSTre = \sumJ N_j \{ \bar{Y}_{\cdot}^\obs(j)  - \bar{Y}_{\cdot}^\obs \}^2  
\end{equation*}
be the treatment sum of squares, and let
\begin{equation*}
\SSRes = \sumJ \sum_{i:W_i(j) = 1}  \{ Y_i^\obs - \bar{Y}_{\cdot}^\obs (j) \}^2
=  \sumJ  (N_j - 1) s^2_{\obs}(j)  
\end{equation*}
be the residual sum of squares. The treatment and residual sums of squares sum up to the total sum of squares $\sumN (Y_i^\obs - \bar{Y}_{\cdot}^\obs)^2 = (N-1)s^2_\obs $.
The $F$ statistic
\begin{equation}
F = \frac{   \SSTre / (J-1)  }{   \SSRes / (N-J)    }  \equiv  \frac{ \MSTre }{ \MSRes} \label{eq:F}
\end{equation}
 is defined as the ratio of the mean squares of treatment $ \MSTre = \SSTre / (J-1) $ to the mean squares of residual $\MSRes=  \SSRes / (N-J)  $.

The distribution of (\ref{eq:F}) under $\HF$ can be well approximated by an $F_{J-1, N-J}$ distribution with degrees of freedom $J-1$ and $N-J$, as is often used in the analysis of variance table obtained from fitting a normal linear model. Whereas it is relatively easy to show that (\ref{eq:F}) follows $F_{J-1, N-J}$ if the observed outcomes follows a normal linear model drawn from a super population, arriving at such a result using a purely randomization-based argument is non-trivial. Below, we state a known result on the approximate randomization distribution of (\ref{eq:F}), in which we use the notation $A_N \asim B_N$ to represent two sequences of random variables $\{A_N\}_{N=1}^\infty$ and $\{B_N\}_{N=1}^\infty$ that have the same asymptotic distribution as $N \rightarrow \infty$. Throughout our discussion, we assume the following regularity conditions required by the finite population central limit theorem for causal inference \citep{li2017general}.

\begin{condition}
As $N\rightarrow \infty$, for all $j$, $N_j/N$ has a positive limit, $\bar{Y}_{\cdot}(j)$ and $S_{\cdot}^2(j)$ have finite limits, and $N^{-1}\max_{1\leq i\leq N} | Y_i(j) - \bar{Y}_{\cdot}(j) |^2 \rightarrow 0.$
\end{condition}

\begin{theorem}
\label{thm::fisher-sharp-null}
Assume $\HF$. Over repeated sampling of $(W_1,\ldots, W_N)$, the expectations of the residual and treatment sums of squares are $E(\SSTre) = (J-1) s^2_\obs$ and $E(\SSRes) = (N-J) s^2_{\obs}$, and as $N\rightarrow \infty$, the asymptotic distribution of (\ref{eq:F}) is
$$
F\asim \frac{  \chi^2_{J-1}/ (J-1) }{   \{  (N-1) - \chi^2_{J-1}  \}/(N-J) } \asim F_{J-1, N-J}.
$$
\end{theorem}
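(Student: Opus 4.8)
The plan is to exploit the fact that under $\HF$ every potential outcome of unit $i$ equals the observed value $Y_i^\obs$, so $\bar{Y}_{\cdot}^\obs$, $s^2_\obs$, and the total sum of squares $(N-1)s^2_\obs$ are all \emph{nonrandom} constants; the only remaining randomness is the allocation of the $N$ units, carrying the fixed values $Y_1^\obs,\ldots,Y_N^\obs$, into the $J$ treatment groups of sizes $N_1,\ldots,N_J$, which is a finite-population simple-random-sampling scheme. Within this scheme $s^2_\obs(j)$ is the sample variance of a size-$N_j$ sample drawn without replacement from the population $\{Y_i^\obs\}_{i=1}^N$, whose finite-population variance is exactly $s^2_\obs$. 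Unbiasedness of the sample variance under such sampling gives $E\{s^2_\obs(j)\} = s^2_\obs$, so $E(\SSRes) = \sumJ (N_j-1) E\{s^2_\obs(j)\} = (N-J)s^2_\obs$; and since $\SSTre + \SSRes = (N-1)s^2_\obs$ is fixed, $E(\SSTre) = (J-1)s^2_\obs$.

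For the asymptotic distribution I would first obtain the joint limit law of the treatment-group means. Standard finite-population calculations give $\var\{\bar{Y}_{\cdot}^\obs(j)\} = s^2_\obs(N_j^{-1}-N^{-1})$ and $\cov\{\bar{Y}_{\cdot}^\obs(j),\bar{Y}_{\cdot}^\obs(j')\} = -s^2_\obs/N$ for $j\neq j'$. Writing $\bar{\bm{Y}}^\obs = \{\bar{Y}_{\cdot}^\obs(1),\ldots,\bar{Y}_{\cdot}^\obs(J)\}^\T$, $\bm{D} = \mathrm{diag}(N_1^{1/2},\ldots,N_J^{1/2})$ and $\bm{p} = (N_1^{1/2},\ldots,N_J^{1/2})^\T/N^{1/2}$, which is a unit vector, the vector $\bm{V} = \bm{D}(\bar{\bm{Y}}^\obs - \bar{Y}_{\cdot}^\obs\bm{1})$ satisfies $\SSTre = \bm{V}^\T\bm{V}$ and $\cov(\bm{V}/s_\obs) = \bm{I}_J - \bm{p}\bm{p}^\T$, a rank-$(J-1)$ orthogonal projection. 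Condition 1, with $s^2_\obs$ having a positive limit $\sigma^2_\infty$ in the nondegenerate case, lets us invoke the finite-population central limit theorem of \citet{li2017general} to conclude that $\bm{V}/s_\obs$ is asymptotically $N(\bm{0}, \bm{I}_J - \bm{p}_\infty\bm{p}_\infty^\T)$, with $\bm{p}_\infty = \lim \bm{p}$. By the continuous mapping theorem $\SSTre/s^2_\obs = \|\bm{V}/s_\obs\|^2$ converges in distribution to $\|\bm{Z}\|^2$ with $\bm{Z}\sim N(\bm{0},\bm{I}_J - \bm{p}_\infty\bm{p}_\infty^\T)$; because the covariance is a rank-$(J-1)$ projection, $\|\bm{Z}\|^2\sim\chi^2_{J-1}$, i.e.\ $\SSTre\asim s^2_\obs\chi^2_{J-1}$. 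Substituting into $\SSRes = (N-1)s^2_\obs - \SSTre$ gives $\SSRes\asim s^2_\obs\{(N-1)-\chi^2_{J-1}\}$, and forming the ratio (\ref{eq:F}), in which $s^2_\obs$ cancels, yields the first displayed equivalence. For the second, $\{(N-1)-\chi^2_{J-1}\}/(N-J)\to1$ in probability since $\chi^2_{J-1}=O_p(1)$, so by Slutsky $F$ is asymptotically $\chi^2_{J-1}/(J-1)$, which is also the limit of an $F_{J-1,N-J}$ variate because its denominator $\chi^2_{N-J}/(N-J)\to1$.

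I expect the main obstacle to be the joint asymptotic normality of the $J$ group means together with the covariance bookkeeping: one must verify the Lindeberg-type requirement embedded in Condition 1 and correctly identify the limiting covariance of the standardized vector as a rank-$(J-1)$ projection, which is what produces the $\chi^2_{J-1}$. It is also worth flagging in the write-up that, unlike the super-population $F$ test, the numerator and denominator of $F$ are perfectly negatively dependent under $\HF$ because their sum is the fixed quantity $(N-1)s^2_\obs$; this explains why the same $\chi^2_{J-1}$ appears in both the numerator and the denominator of the middle expression, and why the reduction to $F_{J-1,N-J}$ holds only asymptotically rather than exactly.
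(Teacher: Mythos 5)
Your proposal is correct and follows essentially the same route as the paper's own proof: unbiasedness of the group-wise sample mean and variance under simple random sampling (plus the fixed total sum of squares) for the expectations, and then the finite population central limit theorem of \citet{li2017general} applied to the vector $N_j^{1/2}\{\bar{Y}_{\cdot}^\obs(j)-\bar{Y}_{\cdot}^\obs\}$, whose limiting covariance is $s^2_\obs$ times a rank-$(J-1)$ projection, yielding $\SSTre \asim s^2_\obs\chi^2_{J-1}$ and the stated limit for $F$. Your added observations (the Slutsky step and the exact negative dependence of $\SSTre$ and $\SSRes$) are consistent with, and slightly more explicit than, the paper's argument.
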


\begin{remark} \label{remark::theorem1_1}
As $N\rightarrow \infty$, both the statistic $F$ and random variable $F_{J-1, N-J}$ are asymptotically $\chi^2_{J-1}/(J-1)$. The original $F$ approximation for randomization inference for a finite population was derived by cumbersome moment matching between statistic (\ref{eq:F}) and the corresponding $F_{J-1, N-J}$ distribution \citep{welch::1937, pitman::1938, kempthorne::1952}. Similar to \citet{silvey1954asymptotic}, we provide a simpler proof based on the finite population central limit theorem in the Supplementary Material. 
\end{remark}

\begin{remark} \label{remark::theorem1_2}
Under $\HF$, the total sum of squares is fixed, but its components $\SSTre$ and $\SSRes$ are random through the treatment assignment $(W_1,\ldots,W_N)$, and their expectations are calculated with respect to the distribution of the treatment assignment. Also, the ratio of expectations of the numerator $\MSTre$ and denominator $\MSRes$ of (\ref{eq:F}) is $1$ under $\HF$.
\end{remark}

\section{Sampling properties of the F statistic under Neyman's null hypothesis}
\label{sec::Neyman}

In Section \ref{sec:FRT}, we discussed the randomization distribution, i.e., the sampling distribution under $\HF$, of the $F$ statistic in (\ref{eq:F}). However, the sampling distribution of the $F$ statistic under Neyman's null hypothesis of zero average treatment effect \citep{neyman::1923, neyman::1935}, i.e.,
\begin{equation*}
\HN: \bar{Y}_{\cdot}(1) = \cdots = \bar{Y}_{\cdot}(J),  
\end{equation*}
is often of major interest but is under-investigated \citep{imbens::2015book}. $\HN$ imposes weaker restrictions on the potential outcomes than $\HF$, making it impossible to compute the exact, or even approximate distribution of the $F$ statistic under $\HN$. However, analytical expressions for $E(\SSTre)$ and $E(\SSRes)$ can be derived under $\HN$ along the lines of Theorem \ref{thm::fisher-sharp-null}, and can be used to gain insights about the consequences of testing $\HN$ using the \FRT with the $F$ statistic in (\ref{eq:F}).

For treatment level $j = 1, \ldots, J$, define
$
p_j = N_j/N
$
as the proportion of the units, and $S_{\cdot}^2(j) = (N -1)^{-1} \sumN \{   Y_i(j) - \bar{Y}_{\cdot}(j) \}^2$ as the finite population variances of potential outcomes. Let $\bar{Y}_{\cdot}(\cdot) = \sumJ p_j  \bar{Y}_{\cdot}(j) $ and
$
S^2 = \sumJ p_j S_{\cdot}^2(j)
$
be the weighted averages of the finite population means and variances.
The sampling distribution of the $F$ statistic in (\ref{eq:F}) depends crucially on the finite population variance of the unit-level treatment effects
$$
S_{\cdot}^2(j\-j')  = (N-1)^{-1} \sumN  \{   \tau_i(j, j') - \tau(j, j')  \}^2 .
$$

\begin{definition} \label{def:strict_additivity}
The potential outcomes $\{ Y_i(j):  i=1, \ldots, N, \  j = 1, \ldots, J \} $ have strictly additive treatment effects if for all $j \ne j' $, the unit-level treatment effects $\tau_i(j, j')$ are the same for $i=1, \ldots, N$, or equivalently, $ S_{\cdot}^2(j\-j')  = 0$ for all $j \ne j'$.
\end{definition}

\cite{kempthorne1955randomization} obtained the following result on the sampling expectations of $\SSRes$ and $\SSTre$ for balanced designs with $p_j=1/J$ under the assumption of strict additivity:
\begin{eqnarray}
E(\SSRes) =  (N-J )S^2,\quad
E(\SSTre)  =  \frac{N}{J } \sumJ  \{  \bar{Y}_{\cdot}(j) - \bar{Y}_{\cdot}(\cdot)  \}^2 +   (J-1) S^2.
\label{eq::kempthorne}
\end{eqnarray}
This result implies that with balanced treatment assignments and strict additivity, $E(\MSRes - \MSTre) = 0$ under $\HN$, and provides a heuristic justification for testing $\HN$ using the \FRT with the $F$ statistic. However, strict additivity combined with $\HN$ implies $\HF$, for which this result is already known by Theorem \ref{thm::fisher-sharp-null}. We will now derive results that do not require the assumption of strict additivity, and thus are more general than those in \cite{kempthorne1955randomization}. For this purpose, we introduce a measure of deviation from additivity. Let
\begin{equation*}
 \Delta =  \mathop{\sum\sum}_{j  <  j'} p_j p_{j'}   S_{\cdot}^2(j\-j') 
\end{equation*}
be a weighted average of the variances of unit-level treatment effects. By Definition \ref{def:strict_additivity}, $\Delta = 0$ under strict additivity. If strict additivity does not hold, i.e., there is treatment effect heterogeneity \citep{ding2016randomization}, then $ \Delta \neq 0$. Thus $\Delta$ is a measure of deviation from additivity and plays a crucial role in the following results on the sampling distribution of the $F$ statistic.

\begin{theorem}
\label{thm::sampling-F-Neyman}
Over repeated sampling of $(W_1, \ldots, W_N)$, the expectation of the residual sum of squares is
$
E(\SSRes)
= \sumJ (N_j - 1)  S_{\cdot}^2(j) ,
$
and the expectation of the treatment sum of squares is 
$$
E(\SSTre) =
\sumJ N_j  \left\{ \bar{Y}_{\cdot}(j)  - \bar{Y}_{\cdot}(\cdot)  \right\}^2+ \sumJ (1-p_j) S_{\cdot}^2(j) - \Delta,
$$
which reduces to $E(\SSTre) =   \sumJ (1-p_j) S_{\cdot}^2(j) - \Delta$ under $\HN$. 
\end{theorem}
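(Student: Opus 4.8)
The plan is to establish the two expectations by separate arguments, the one for $E(\SSRes)$ being immediate and the one for $E(\SSTre)$ requiring a reduction to pairwise comparisons. For $E(\SSRes)$, I would use the identity $\SSRes = \sumJ (N_j-1) s^2_\obs(j)$ together with the fact that, under complete randomization, the $N_j$ units receiving treatment $j$ form a simple random sample without replacement of size $N_j$ from the finite population $\{Y_1(j), \ldots, Y_N(j)\}$. Since the sample variance of a simple random sample is unbiased for the finite-population variance, $E\{s^2_\obs(j)\} = S_{\cdot}^2(j)$, and linearity of expectation gives $E(\SSRes) = \sumJ (N_j - 1) S_{\cdot}^2(j)$. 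This step uses neither $\HF$ nor $\HN$.

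For $E(\SSTre)$, I would first rewrite the treatment sum of squares as a weighted sum of squared pairwise contrasts,
\[
\SSTre = \frac{1}{N}\mathop{\sum\sum}_{j<j'} N_j N_{j'}\,\{\bar{Y}_{\cdot}^\obs(j) - \bar{Y}_{\cdot}^\obs(j')\}^2 ,
\]
which follows from the elementary identity $\sum_j w_j (a_j - \bar a)^2 = (\sum_j w_j)^{-1}\mathop{\sum\sum}_{j<j'} w_j w_{j'} (a_j - a_{j'})^2$ with weights $w_j = N_j$ and $a_j = \bar{Y}_{\cdot}^\obs(j)$. Taking expectations term by term and using $E\{\bar{Y}_{\cdot}^\obs(j)\} = \bar{Y}_{\cdot}(j)$ gives $E[\{\bar{Y}_{\cdot}^\obs(j) - \bar{Y}_{\cdot}^\obs(j')\}^2] = \{\bar{Y}_{\cdot}(j) - \bar{Y}_{\cdot}(j')\}^2 + \var\{\bar{Y}_{\cdot}^\obs(j) - \bar{Y}_{\cdot}^\obs(j')\}$.

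The crucial input is Neyman's classical variance formula for a two-arm contrast under complete randomization, $\var\{\bar{Y}_{\cdot}^\obs(j) - \bar{Y}_{\cdot}^\obs(j')\} = S_{\cdot}^2(j)/N_j + S_{\cdot}^2(j')/N_{j'} - S_{\cdot}^2(j\-j')/N$. It applies here because this variance depends only on the first- and second-order assignment probabilities $\pr\{W_i(j)=1\} = N_j/N$ and $\pr\{W_i(j)=1, W_{i'}(j')=1\} = N_j N_{j'}/\{N(N-1)\}$ for $i\neq i'$, which coincide for the $J$-arm and the two-arm designs; one derives it by writing the contrast as $\sumN \{W_i(j) Y_i(j)/N_j - W_i(j') Y_i(j')/N_{j'}\}$ and computing its variance from these probabilities, together with the decomposition $S_{\cdot}^2(j\-j') = S_{\cdot}^2(j) + S_{\cdot}^2(j') - 2 S_{\cdot}(j,j')$ relating the variance of the unit-level effects to the potential-outcome variances and their covariance $S_{\cdot}(j,j')$. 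Substituting and collecting: by the weighted-contrast identity run in reverse, $N^{-1}\mathop{\sum\sum}_{j<j'} N_j N_{j'}\{\bar{Y}_{\cdot}(j) - \bar{Y}_{\cdot}(j')\}^2 = \sumJ N_j\{\bar{Y}_{\cdot}(j) - \bar{Y}_{\cdot}(\cdot)\}^2$; the sampling-variance contributions give $N^{-1}\mathop{\sum\sum}_{j<j'}\{N_{j'} S_{\cdot}^2(j) + N_j S_{\cdot}^2(j')\} = N^{-1}\sumJ (N - N_j) S_{\cdot}^2(j) = \sumJ (1 - p_j) S_{\cdot}^2(j)$; and the heterogeneity contributions give $N^{-2}\mathop{\sum\sum}_{j<j'} N_j N_{j'} S_{\cdot}^2(j\-j') = \Delta$, entering with a minus sign. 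This produces the stated $E(\SSTre)$, and under $\HN$ all the $\bar{Y}_{\cdot}(j)$ equal $\bar{Y}_{\cdot}(\cdot)$, so the first term drops out.

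I expect the main obstacle to be the two ingredients feeding the $\SSTre$ calculation: first, the two-arm Neyman variance identity, whose derivation is a covariance computation over random inclusion indicators in which one must track the finite-population correction factors and recognize the decomposition of $S_{\cdot}^2(j\-j')$; and second, the bookkeeping that reorganizes the double sums into the three clean pieces above. An alternative route avoids pairwise contrasts by writing $\SSTre = (N-1) s^2_\obs - \SSRes$ and computing $E\{(N-1) s^2_\obs\}$ from $E\{(Y_i^\obs)^2\} = \sumJ p_j Y_i(j)^2$ and $\var(\bar{Y}_{\cdot}^\obs)$ via the second-order assignment probabilities; this is more self-contained but the algebra for $\var(\bar{Y}_{\cdot}^\obs)$ is heavier and the outcome must still be rearranged into the form involving $\Delta$.
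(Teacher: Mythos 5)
Your proposal is correct, and for the $E(\SSTre)$ part it takes a genuinely different route from the paper. The $E(\SSRes)$ argument is identical to the paper's: unbiasedness of the sample variance under simple random sampling, no null hypothesis needed. For $E(\SSTre)$, the paper works directly with $\sumJ N_j\{\bar{Y}_{\cdot}^\obs(j)-\bar{Y}_{\cdot}^\obs\}^2$, computing $\var\{\bar{Y}_{\cdot}^\obs(j)\}$, $\cov\{\bar{Y}_{\cdot}^\obs(j),\bar{Y}_{\cdot}^\obs(j')\}$, $\var(\bar{Y}_{\cdot}^\obs)$ and $\cov\{\bar{Y}_{\cdot}^\obs(j),\bar{Y}_{\cdot}^\obs\}$ from its Lemmas S1--S2 and then assembling $\var\{\bar{Y}_{\cdot}^\obs(j)-\bar{Y}_{\cdot}^\obs\}$ term by term (picking up the pleasant intermediate fact $\var(\bar{Y}_{\cdot}^\obs)=\Delta/N$); you instead decompose $\SSTre$ into weighted squared pairwise contrasts and feed in Neyman's variance formula for each contrast, which makes $\Delta$ emerge directly as the weighted average of the $S_{\cdot}^2(j\-j')$ and keeps the bookkeeping to three clean sums. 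Both routes ultimately rest on the same covariance structure, since the contrast variance
$S_{\cdot}^2(j)/N_j+S_{\cdot}^2(j')/N_{j'}-S_{\cdot}^2(j\-j')/N$
is exactly what the paper's Lemma S2 delivers via $\cov\{\bar{Y}_{\cdot}^\obs(j),\bar{Y}_{\cdot}^\obs(j')\}=-\{S_{\cdot}^2(j)+S_{\cdot}^2(j')-S_{\cdot}^2(j\-j')\}/(2N)$. One small caution: your appeal to ``the two-arm design'' as justification is loosely worded --- in a genuine two-arm completely randomized experiment on $N_j+N_{j'}$ units the third term would be divided by $N_j+N_{j'}$, not $N$ --- but the formula you actually use, with $N$ in the denominator, is the correct one for the $J$-arm design, and the direct derivation you sketch (writing the contrast as $\sumN\{W_i(j)Y_i(j)/N_j - W_i(j')Y_i(j')/N_{j'}\}$ and using the assignment-indicator moments) is precisely what validates it; making that the primary justification, rather than the two-arm analogy, would tighten the argument.
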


\begin{corollary}
\label{coro::1}
Under $\HN$ with strict additivity in Definition \ref{def:strict_additivity}, or, equivalently, under $\HF$, the above results reduce to
$
E(\SSRes) = (N-J) S^2
$
and
$
E(\SSTre) =  (J-1) S^2,
$
which coincide with Theorem \ref{thm::fisher-sharp-null}.
\end{corollary}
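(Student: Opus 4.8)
The plan is to substitute the strict-additivity conditions directly into the formulas of Theorem~\ref{thm::sampling-F-Neyman}. First I would extract two structural consequences of Definition~\ref{def:strict_additivity}. Since $S_{\cdot}^2(j\-j') = 0$ for all $j \ne j'$, the weighted average $\Delta = \mathop{\sum\sum}_{j<j'} p_j p_{j'} S_{\cdot}^2(j\-j')$ vanishes. Moreover, strict additivity says $Y_i(j) - Y_i(j') = \tau(j,j')$ is free of $i$, so the vector $\{Y_i(j)\}_{i=1}^N$ is a constant shift of $\{Y_i(j')\}_{i=1}^N$; a constant shift leaves a variance unchanged, hence $S_{\cdot}^2(1) = \cdots = S_{\cdot}^2(J)$, and since $S^2 = \sumJ p_j S_{\cdot}^2(j)$ is a convex combination of these equal quantities, each $S_{\cdot}^2(j)$ equals $S^2$.

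Substituting $S_{\cdot}^2(j) = S^2$ into $E(\SSRes) = \sumJ (N_j - 1) S_{\cdot}^2(j)$ from Theorem~\ref{thm::sampling-F-Neyman} gives $E(\SSRes) = S^2 \sumJ (N_j - 1) = (N - J) S^2$. For $E(\SSTre)$ I would start from the $\HN$ reduction already recorded in Theorem~\ref{thm::sampling-F-Neyman}, namely $E(\SSTre) = \sumJ (1 - p_j) S_{\cdot}^2(j) - \Delta$, where the leading term drops because $\HN$ forces $\bar{Y}_{\cdot}(j) = \bar{Y}_{\cdot}(\cdot)$ for every $j$. Plugging in $\Delta = 0$ and $S_{\cdot}^2(j) = S^2$, and using $\sumJ (1 - p_j) = J - \sumJ p_j = J - 1$, yields $E(\SSTre) = (J-1) S^2$.

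To finish, I would verify the equivalence claimed in the statement together with its last clause. Combining $\HN$, so that $\tau(j,j') = 0$, with strict additivity, so that $\tau_i(j,j') = \tau(j,j')$, forces $Y_i(j) = Y_i(j')$ for all $i$ and all $j \ne j'$, which is exactly $\HF$; conversely $\HF$ implies both $\HN$ and strict additivity, so the two scenarios coincide. Finally, under $\HF$ we have $Y_i^\obs = Y_i(j)$ for every $j$, whence $s^2_\obs = S_{\cdot}^2(j) = S^2$, so the expressions $(N-J)S^2$ and $(J-1)S^2$ are identical to the $E(\SSRes) = (N-J)s^2_\obs$ and $E(\SSTre) = (J-1)s^2_\obs$ of Theorem~\ref{thm::fisher-sharp-null}. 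There is essentially no obstacle here — the one step worth stating carefully is the observation that strict additivity equalizes the within-treatment variances $S_{\cdot}^2(j)$, which is precisely what collapses the sums into multiples of $S^2$.
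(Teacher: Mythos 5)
Your proof is correct and follows essentially the same route as the paper, which simply notes that additivity implies $S_{\cdot}^2(j) = S^2$ for all $j$ and $\Delta = 0$ and then substitutes into Theorem \ref{thm::sampling-F-Neyman}; you merely spell out the substitution and the equivalence with $\HF$ (and the identification $s^2_\obs = S^2$) that the paper leaves implicit.
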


\begin{corollary}
\label{coro::2}
For a balanced design with $p_j = 1/J$,
$$
E(\SSRes) =  (N-J )S^2,\quad
E(\SSTre)  =  \frac{N}{J  } \sumJ  \{  \bar{Y}_{\cdot}(j) - \bar{Y}_{\cdot}(\cdot)  \}^2 +   (J-1) S^2 - \Delta.
$$
Furthermore, under $\HN$, 
$
E(\SSRes) =  (N-J )S^2
$
and
$
E(\SSTre)  = (J-1) S^2 - \Delta,
$
implying that the difference between the mean squares of the residual and the treatment is
$
E ( \MSRes - \MSTre )  =  \Delta  / (J - 1)  \geq 0.
$
\end{corollary}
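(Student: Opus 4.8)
The plan is to obtain Corollary \ref{coro::2} as a direct specialization of Theorem \ref{thm::sampling-F-Neyman} to the balanced case $p_j = 1/J$, i.e. $N_j = N/J$ for every $j$. First I would record the elementary identity $\sumJ S_{\cdot}^2(j) = J S^2$, which follows from the definition $S^2 = \sumJ p_j S_{\cdot}^2(j) = J^{-1}\sumJ S_{\cdot}^2(j)$; this is the only bookkeeping device needed to convert the sums $\sumJ (N_j-1)S_{\cdot}^2(j)$ and $\sumJ (1-p_j) S_{\cdot}^2(j)$ appearing in Theorem \ref{thm::sampling-F-Neyman} into multiples of $S^2$.

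Substituting $N_j - 1 = N/J - 1$ into the expression for $E(\SSRes)$ from Theorem \ref{thm::sampling-F-Neyman} and using the identity above gives $E(\SSRes) = (N/J - 1)\sumJ S_{\cdot}^2(j) = (N-J) S^2$. Substituting $N_j = N/J$ and $1 - p_j = 1 - 1/J$ into the expression for $E(\SSTre)$ gives $E(\SSTre) = (N/J)\sumJ \{\bar{Y}_{\cdot}(j) - \bar{Y}_{\cdot}(\cdot)\}^2 + (1-1/J)\sumJ S_{\cdot}^2(j) - \Delta = (N/J)\sumJ \{\bar{Y}_{\cdot}(j) - \bar{Y}_{\cdot}(\cdot)\}^2 + (J-1) S^2 - \Delta$, the claimed unconditional formula. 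Under $\HN$ the common value of the $\bar{Y}_{\cdot}(j)$ equals $\bar{Y}_{\cdot}(\cdot)$, so the first term vanishes and $E(\SSTre) = (J-1) S^2 - \Delta$, while $E(\SSRes)$ is unchanged.

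It then remains to divide by the respective degrees of freedom: $E(\MSRes) = E(\SSRes)/(N-J) = S^2$ and $E(\MSTre) = E(\SSTre)/(J-1) = S^2 - \Delta/(J-1)$, so $E(\MSRes - \MSTre) = \Delta/(J-1)$. Finally, $\Delta = \mathop{\sum\sum}_{j<j'} p_j p_{j'} S_{\cdot}^2(j\-j') \geq 0$ because it is a nonnegative linear combination, with positive weights $p_j p_{j'}$, of the variances $S_{\cdot}^2(j\-j') \geq 0$ of the unit-level treatment effects; dividing by $J-1 > 0$ preserves the sign, giving $E(\MSRes - \MSTre) \geq 0$.

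There is essentially no substantive obstacle here: granting Theorem \ref{thm::sampling-F-Neyman}, the corollary is pure algebra together with the trivial sign observation for $\Delta$. The only point requiring a moment's care is that it is the weighted average $S^2$, not the raw sum $\sumJ S_{\cdot}^2(j)$, that appears in the target expressions, so the factor $J$ must be tracked correctly through both computations. I would also flag the interpretive content of $E(\MSRes - \MSTre) = \Delta/(J-1) \ge 0$: under $\HN$ the residual mean square is on average no smaller than the treatment mean square, and the gap is governed by the heterogeneity measure $\Delta$, which is exactly the mechanism behind the type I error inflation of the $F$-statistic-based Fisher randomization test discussed in the surrounding text.
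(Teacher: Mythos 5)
Your proposal is correct and follows essentially the same route as the paper's own proof: specialize Theorem \ref{thm::sampling-F-Neyman} to $p_j = 1/J$ using $\sumJ S_{\cdot}^2(j) = J S^2$, drop the mean-difference term under $\HN$, divide by the degrees of freedom, and note $\Delta \geq 0$ as a nonnegative weighted sum of variances. No gaps or differences worth flagging.
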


The result in \eqref{eq::kempthorne} is a special case of Corollary \ref{coro::2} for $\Delta=0$. Corollary \ref{coro::2} implies that, for balanced designs, if the assumption of strict additivity does not hold, then testing $\HN$ using the \FRT with the $F$ statistic may be conservative, in a sense that it may reject a null hypothesis less often than the nominal level. However, for unbalanced designs, the conclusion is not definite, as will be seen from the following result.

\begin{corollary}
\label{coro::unbalanced}
Under $\HN$, the difference between the mean squares of the residual and the treatment is
$$
E( \MSRes - \MSTre)  =
\frac{  (N-1)J }{  (J-1)(N-J) } \sumJ (p_j - J^{-1})S_{\cdot}^2(j) +  \frac{ \Delta} { J-1}.
$$
\end{corollary}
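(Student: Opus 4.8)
The plan is to derive this directly from Theorem \ref{thm::sampling-F-Neyman}. Under $\HN$ that theorem gives $E(\SSRes) = \sumJ (N_j - 1) S_{\cdot}^2(j)$ and $E(\SSTre) = \sumJ (1 - p_j) S_{\cdot}^2(j) - \Delta$. Dividing by the degrees of freedom $N - J$ and $J - 1$ respectively expresses $E(\MSRes)$ and $E(\MSTre)$ as linear combinations of the population variances $S_{\cdot}^2(j)$, together with the term $\Delta/(J-1)$ inherited from $\MSTre$. So the whole problem reduces to reading off, and simplifying, the coefficient of each $S_{\cdot}^2(j)$ in $E(\MSRes - \MSTre)$.

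First I would write $N_j = N p_j$, so that the coefficient of $S_{\cdot}^2(j)$ in $E(\MSRes)$ is $(N p_j - 1)/(N - J)$ and its coefficient in $E(\MSTre)$ is $(1 - p_j)/(J - 1)$. I would then put the difference of these coefficients over the common denominator $(J-1)(N-J)$ and expand the numerator $(N p_j - 1)(J - 1) - (1 - p_j)(N - J)$. The cross terms $-N p_j$ and $+p_j N$ cancel, and collecting what remains yields $p_j J (N-1) - (N-1) = (N-1)(p_j J - 1) = (N-1) J (p_j - J^{-1})$, so the coefficient is $(N-1) J (p_j - J^{-1}) / \{(J-1)(N-J)\}$. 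Summing over $j$ and carrying along the untouched $+\Delta/(J-1)$ term (note the sign flips relative to its appearance in $E(\SSTre)$) gives the stated identity.

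Since this is a purely algebraic consequence of Theorem \ref{thm::sampling-F-Neyman}, there is no conceptual obstacle; the only thing requiring care is the bookkeeping in the numerator expansion and the sign conventions on $\Delta$. As consistency checks I would verify that putting $p_j = J^{-1}$ collapses the first sum to zero and recovers Corollary \ref{coro::2}, and that adding strict additivity ($\Delta = 0$ with all $S_{\cdot}^2(j) = S^2$) makes the entire right-hand side vanish, in agreement with Corollary \ref{coro::1}.
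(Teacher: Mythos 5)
Your proposal is correct and follows essentially the same route as the paper: it takes the expectations from Theorem \ref{thm::sampling-F-Neyman} under $\HN$, divides by the degrees of freedom, and simplifies the per-$j$ coefficient $\left(\frac{N_j-1}{N-J}-\frac{1-p_j}{J-1}\right)$ to $\frac{(N-1)J}{(J-1)(N-J)}(p_j-J^{-1})$, with the $+\Delta/(J-1)$ term handled exactly as you describe. Your algebraic simplification and consistency checks against Corollaries \ref{coro::1} and \ref{coro::2} are all valid.
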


Corollary \ref{coro::unbalanced} shows that the mean square of the residual may be bigger or smaller than that of the treatment, depending on the balance or lack thereof of the experiment and the variances of the potential outcomes. Under $\HN$, when the $p_j$'s and $S_{\cdot}^2(j)$'s are positively associated, the \FRT using $F$ tends to be conservative; when the $p_j$'s and $S_{\cdot}^2(j)$'s are negatively associated, the \FRT using $F$ may not control correct type I error.

\section{A test statistic that controls type I error more precisely than $F$}
\label{sec::newstatistic}

To address the failure of the $F$ statistic to control type I error of the Fisher randomization test under $\HN$ in unbalanced experiments, we propose to use the following $X^2$ test statistic for the Fisher randomization test. Define $\hat{Q}_j = N_j /   s^2_{\obs}(j) $,
and define the weighted average of the sample means as
$
\bar{Y}^\obs_w = \sumJ  \hat{Q}_j  \bar{Y}_{\cdot}^\obs(j) /    \sumJ \hat{Q}_j.
$
Define the $X^2$ test statistic as
\begin{equation}
X^2 = \sumJ   \hat{Q}_j  \left\{      \bar{Y}_{\cdot}^\obs(j) -    \bar{Y}^\obs_w   \right\}^2 , \label{eq:X^2}
\end{equation}
which can be obtained from weighted least squares. This test statistic has been exploited in the classical analysis of variance literature \citep[e.g.,][]{ james1951comparison, welch1951comparison, johansen1980welch, rice1989one, weerahandi1995anova, krishnamoorthy2007parametric} based on the normal linear model with heteroskedasticity, and a similar idea called studentization has been adopted in the permutation test literature \citep[e.g.,][]{neuhaus1993conditional, janssen1997studentized, janssen1999testing, janssen2003bootstrap, chung2013exact,  pauly2015asymptotic}.

Clearly, replacing the $F$ statistic by the $X^2$ statistic does not affect the validity of the \FRT for testing $\HF$, because we always have an exact test for $\HF$ no matter which test statistic we use. Moreover, we derive a new result showing that the \FRT using $X^2$ as the test statistic can also control the asymptotic type I error for testing $\HN$. This means that the \FRT using $X^2$ as the test statistic can control the type I error under both $\HF$ and $\HN$ asymptotically, making $X^2$ a more attractive choice than the classical $F$ statistic for conducting the Fisher randomization test. Below, we formally state this new result.

\begin{theorem}
\label{thm::chisq-neyman}
Under $\HF$, the asymptotic distribution of $X^2$ is $\chi^2_{J-1}$ as $N\rightarrow \infty$.
Under $\HN$, the asymptotic distribution of $X^2$ is stochastically dominated by $\chi^2_{J-1}$, i.e., for any constant $a>0$,
$ \lim_{N\rightarrow \infty} \pr(X^2 \ge a) \le \pr(\chi^2_{J-1} \ge a) .$
\end{theorem}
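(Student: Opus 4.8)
\emph{Proof strategy.} The plan is to write $X^2$ as a quadratic form in the vector of observed arm means, apply the finite population central limit theorem, and then compare the limiting quadratic form with a $\chi^2_{J-1}$ random variable via a monotonicity property of Gaussian quadratic forms. Set $\hat{\bm\mu} = \{\bar{Y}_{\cdot}^\obs(1),\ldots,\bar{Y}_{\cdot}^\obs(J)\}^\T$ and $\bm\mu = \{\bar{Y}_{\cdot}(1),\ldots,\bar{Y}_{\cdot}(J)\}^\T$. Expanding the square shows $X^2 = \hat{\bm\mu}^\T \hat{B}_N \hat{\bm\mu}$ with $\hat{B}_N = \mathrm{diag}(\hat{Q}_j) - (\sum_j \hat{Q}_j)^{-1}\,\mathrm{diag}(\hat{Q}_j)\,\bm 1\bm 1^\T\,\mathrm{diag}(\hat{Q}_j)$; since $\hat{B}_N\bm 1 = 0$ and, under both $\HF$ and $\HN$, $\bm\mu = \mu_0\bm 1$ for a scalar $\mu_0$, we may recentre and write $X^2 = (\hat{\bm\mu}-\bm\mu)^\T \hat{B}_N (\hat{\bm\mu}-\bm\mu)$. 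By the finite population central limit theorem for completely randomized experiments under Condition~1 \citep{li2017general}, $\sqrt{N}(\hat{\bm\mu}-\bm\mu)\converged\mathcal N(0,V^*)$ with $V^* = \lim_N N\,\cov(\hat{\bm\mu})$; the standard finite-population identities $\var\{\bar{Y}_{\cdot}^\obs(j)\} = (N_j^{-1}-N^{-1})S_{\cdot}^2(j)$ and $\cov\{\bar{Y}_{\cdot}^\obs(j),\bar{Y}_{\cdot}^\obs(j')\} = -N^{-1}S_{\cdot}(j,j')$ give $V^* = D^* - \Sigma^*$, where $D^* = \lim_N \mathrm{diag}\{S_{\cdot}^2(j)/p_j\}$ is diagonal and positive definite, and $\Sigma^* = \lim_N\Sigma$ is the limit of the finite population covariance matrix $\Sigma$ of $\{Y_i(1),\ldots,Y_i(J)\}$, hence positive semidefinite. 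The single structural fact behind the whole result is $V^* = D^* - \Sigma^* \preceq D^*$ in the Loewner order.

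Next I would identify the limit of the weight matrix. Under Condition~1 the within-arm sample variances are design-consistent, so $N^{-1}\hat{Q}_j = p_j/s^2_{\obs}(j)$ converges in probability to a positive constant $q_j$, whence $N^{-1}\hat{B}_N$ converges in probability to $B^* := \mathrm{diag}(q_j) - (\sum_j q_j)^{-1}\,q q^\T$ with $q = (q_1,\ldots,q_J)^\T$. The key point is $q = (D^*)^{-1}\bm 1$, so $B^* = (D^*)^{-1} - \{\bm 1^\T(D^*)^{-1}\bm 1\}^{-1}(D^*)^{-1}\bm 1\bm 1^\T(D^*)^{-1}$ is exactly the weighted least squares residual projection associated with covariance $D^*$; in particular $B^*\bm 1 = 0$, and a one-line computation gives $B^* D^* B^* = B^*$ and $\mathrm{tr}(B^* D^*) = J - 1$. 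By Slutsky's theorem, $X^2 = \{\sqrt{N}(\hat{\bm\mu}-\bm\mu)\}^\T (N^{-1}\hat{B}_N)\{\sqrt{N}(\hat{\bm\mu}-\bm\mu)\} \converged Z^\T B^* Z$ with $Z\sim\mathcal N(0,V^*)$.

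It remains to read off this limiting quadratic form. Factor $V^* = LL^\T$, so $Z^\T B^* Z$ has the distribution of $\xi^\T (L^\T B^* L)\xi$ with $\xi$ a standard Gaussian vector. Under $\HF$ the potential outcomes coincide across arms, so $\Sigma^* = \sigma^{2*}\bm 1\bm 1^\T$ and $B^*\Sigma^* = 0$; hence $B^* V^* B^* = B^* D^* B^* = B^*$, which makes $L^\T B^* L$ idempotent with $\mathrm{tr}(L^\T B^* L) = \mathrm{tr}(B^* V^*) = \mathrm{tr}(B^* D^*) = J-1$, giving $Z^\T B^* Z\sim\chi^2_{J-1}$ and $X^2\converged\chi^2_{J-1}$. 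Under $\HN$ the matrix $B^*$ is unchanged while $\Sigma^*$ is a general positive-semidefinite matrix, so instead I would invoke the Loewner monotonicity of Gaussian quadratic forms: if $V_1\preceq V_2$ are covariance matrices and $B\succeq 0$, then $Z_1^\T B Z_1$ is stochastically dominated by $Z_2^\T B Z_2$ for $Z_i\sim\mathcal N(0,V_i)$. This follows by writing $Z_i^\T B Z_i = \|B^{1/2}Z_i\|^2$ with $B^{1/2}Z_i\sim\mathcal N(0,B^{1/2}V_iB^{1/2})$, observing $B^{1/2}V_1B^{1/2}\preceq B^{1/2}V_2B^{1/2}$, and using Weyl's inequality to order the eigenvalues $\gamma_k(B^{1/2}V_1B^{1/2})\le\gamma_k(B^{1/2}V_2B^{1/2})$, so that the two squared norms have the coupled representations $\sum_k\gamma_k(B^{1/2}V_iB^{1/2})\,g_k^2$ with common standard normals $g_k$. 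Applying this with $V_1 = V^*\preceq D^* = V_2$ and $B = B^*$, and noting that a $\mathcal N(0,D^*)$ vector $Z'$ satisfies $(Z')^\T B^* Z'\sim\chi^2_{J-1}$ — the identities $B^* D^* B^* = B^*$ and $\mathrm{tr}(B^* D^*) = J-1$ do not involve $\Sigma^*$ — we obtain that $Z^\T B^* Z$ is stochastically dominated by $\chi^2_{J-1}$. Then weak convergence and the portmanteau theorem yield $\lim_N\pr(X^2\ge a) = \pr(Z^\T B^* Z\ge a)\le\pr(\chi^2_{J-1}\ge a)$ for every $a>0$, the limit existing because $a>0$ is a continuity point of the limiting law.

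I expect the crux to be the covariance bookkeeping that reduces the comparison to the single clean inequality $V^* = D^* - \Sigma^*\preceq D^*$ — this is precisely why estimating the weights by $\hat{Q}_j = N_j/s^2_{\obs}(j)$ makes the test conservative under treatment effect heterogeneity — together with the Loewner-monotonicity lemma for Gaussian quadratic forms, which is the real engine behind the stochastic domination; by comparison the $\HF$ statement is a routine central limit theorem followed by an idempotency check.
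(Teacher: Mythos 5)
Your proposal is correct, and it reaches Theorem~\ref{thm::chisq-neyman} by a genuinely different route from the paper. The paper first replaces the estimated weights by their limits, then folds the inverse variances into the CLT vector $V_0$, so that the limiting covariance is the Hadamard product $P*R$ of a projection matrix and the finite population correlation matrix of the potential outcomes; the stochastic dominance then rests on two lemmas, the distribution of Gaussian quadratic forms and Schur's theorem bounding the largest eigenvalue of $P*R$ by that of $P$ (which is $1$), with the $\HF$ case recovered by setting $R=1_J1_J^\T$ so that all nonzero eigenvalues equal $1$. You instead keep the raw vector of arm means, observe that the scaled design covariance is $V^*=D^*-\Sigma^*$ with $D^*$ the diagonal limit of $S_{\cdot}^2(j)/p_j$ and $\Sigma^*$ the (positive semidefinite) limit of the finite population covariance matrix of the potential outcomes, identify the limit of the weight matrix as the weighted least squares annihilator $B^*$ satisfying $B^*D^*B^*=B^*$ and $\mathrm{tr}(B^*D^*)=J-1$, and then invoke a Loewner-monotonicity coupling for Gaussian quadratic forms ($V^*\preceq D^*$, eigenvalue ordering via Weyl, common standard normals) to dominate the limit by $\chi^2_{J-1}$; the $\HF$ case follows from $B^*\Sigma^*=0$ when $\Sigma^*$ is rank one, giving exact idempotency. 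The two arguments are equally rigorous (your coupling lemma is a valid substitute for the paper's Lemma on eigenvalues of $P_w(P*R)$ plus Schur's bound, and every computation you cite — $\hat B_N\bm 1=0$, consistency of $s^2_\obs(j)$, the covariance identities, continuity of the limit law at $a>0$ — matches what the paper needs), but they buy different things: your decomposition $V^*=D^*-\Sigma^*\preceq D^*$ makes the source of the conservativeness under $\HN$ completely transparent and avoids any appeal to Hadamard-product eigenvalue theory, whereas the paper's parametrization through the correlation matrix $R$ connects the size of the gap to the correlation of potential outcomes (as exploited in its numerical Example~S1) and hands the $\HF$ case to the permutation-test literature directly. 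The only points worth making explicit if you wrote this up are the (easy) verification that $B^*\succeq 0$, so that $B^{*1/2}$ exists, and a remark that under $\HF$ the relevant ``population'' for the central limit theorem is the fixed observed outcomes with all potential outcomes equal, exactly as in the paper's treatment.
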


\begin{remark}
\label{remark::x2}
Under $\HF$, the randomization distribution of $\SSTre/s_\obs^2$ follows $\chi^2_{J-1}$ asymptotically as shown in the Supplementary Material. Under $\HN$, however, the asymptotic distribution of $\SSTre/s_\obs^2$ is not $\chi^2_{J-1}$, and the asymptotic distribution of $F$ is not $F_{N-J,J-1}$ as suggested by Corollary \ref{coro::unbalanced}. Fortunately, if we weight each treatment square by the inverse of the sample variance of the outcomes, the resulting $X^2$ statistic preserves the asymptotic $\chi^2_{J-1}$ randomization distribution under $\HF$, and has an asymptotic distribution that is stochastically dominated by $\chi^2_{J-1}$ under $\HN$. 
\end{remark}

Therefore, under $\HN$, the type I error of the \FRT using $X^2$ does not exceed the nominal level. Although we can perform the \FRT by enumerating or simulating from all possible realizations of the treatment assignment, Theorem \ref{thm::chisq-neyman} suggests that an asymptotic rejection rule against $\HF$ or $\HN$ is $X^2 > x_{1-\alpha}$, the $1-\alpha$ quantile of the $\chi^2_{J-1}$ distribution. Because the asymptotic distribution of $X^2$ under $\HN$ is stochastically dominated by $\chi^2_{J-1}$, its true $1-\alpha$ quantile is asymptotically smaller than $x_{1-\alpha}$, and the corresponding \FRT is conservative in the sense of having smaller type I error than the nominal level asymptotically.

\begin{remark}
This asymptotic conservativeness is not particular to our test statistic, but rather a feature of finite population inference \citep{neyman::1923, aronow::2014, imbens::2015book}. It distinguishes Theorem \ref{thm::chisq-neyman} from previous results in the permutation test literature \citep[e.g.,][]{chung2013exact,  pauly2015asymptotic}, where the conservativeness did not appear and the correlation between the potential outcomes played no role in the theory.
\end{remark}

The form of $X^2$ in \eqref{eq:X^2} suggests its difference from $F$ when the potential outcomes have different variances under different treatment levels. Otherwise we show that they are asymptotically equivalent in the following sense.

\begin{corollary}
\label{coro::equiv}
If $S_{\cdot}^2(1) = \cdots = S_{\cdot}^2(J)$, then $(J-1) F \asim X^2$.
\end{corollary}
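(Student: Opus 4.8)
\emph{Plan of proof.} The statement is understood under $\HN$ (which subsumes $\HF$): under $\HF$ it is already implied by Theorems \ref{thm::fisher-sharp-null} and \ref{thm::chisq-neyman}, whereas under an alternative both $(J-1)F$ and $X^2$ diverge and have no limiting law. The plan is to prove the stronger statement that, under $\HN$ and the assumption $S_{\cdot}^2(1) = \cdots = S_{\cdot}^2(J)$, one has $(J-1)F - X^2 = o_p(1)$; the conclusion $(J-1)F \asim X^2$ then follows from Slutsky's theorem, since both statistics are $O_p(1)$.

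First I would record two preliminary facts. (i) Under Condition 1 the within-treatment sample variances are consistent, $s^2_{\obs}(j) - S_{\cdot}^2(j) = o_p(1)$ for each $j$; writing $\sigma^2 = \lim S^2 > 0$ for the common limit of the $S_{\cdot}^2(j)$'s, this gives $s^2_{\obs}(j) = \sigma^2 + o_p(1)$ for all $j$, and hence $\MSRes = (N-J)^{-1}\sumJ (N_j-1)s^2_{\obs}(j) = \sigma^2 + o_p(1)$. (ii) I would then show $\bar{Y}^\obs_w - \bar{Y}_{\cdot}^\obs = o_p(N^{-1/2})$. The point is that $\bar{Y}^\obs_w$ and $\bar{Y}_{\cdot}^\obs$ are weighted averages of the $\bar{Y}_{\cdot}^\obs(j)$'s with weight vectors $\{\hat{Q}_j\}$ and $\{N_j\}$ that are asymptotically proportional because the $s^2_{\obs}(j)$'s share a common limit. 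Setting $q_j = N_j/\sigma^2$, one has the exact identity $\sumJ q_j\{\bar{Y}_{\cdot}^\obs(j) - \bar{Y}_{\cdot}^\obs\} = \sigma^{-2}\{N\bar{Y}_{\cdot}^\obs - N\bar{Y}_{\cdot}^\obs\} = 0$, so
$$\bar{Y}^\obs_w - \bar{Y}_{\cdot}^\obs = \frac{\sumJ \hat{Q}_j\{\bar{Y}_{\cdot}^\obs(j) - \bar{Y}_{\cdot}^\obs\}}{\sumJ \hat{Q}_j} = \frac{\sumJ (\hat{Q}_j - q_j)\{\bar{Y}_{\cdot}^\obs(j) - \bar{Y}_{\cdot}^\obs\}}{\sumJ \hat{Q}_j}.$$
Now $\hat{Q}_j - q_j = N_j\{1/s^2_{\obs}(j) - 1/\sigma^2\} = o_p(N)$, the finite population central limit theorem gives $\bar{Y}_{\cdot}^\obs(j) - \bar{Y}_{\cdot}^\obs = O_p(N^{-1/2})$ under $\HN$, and $\sumJ \hat{Q}_j = \sigma^{-2}N\{1+o_p(1)\}$, so the right-hand side is $o_p(N)\cdot O_p(N^{-1/2})/O_p(N) = o_p(N^{-1/2})$.

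Then I would assemble the pieces. Expanding the square and writing $\bar{Y}_{\cdot}^\obs(j) - \bar{Y}^\obs_w = \{\bar{Y}_{\cdot}^\obs(j) - \bar{Y}_{\cdot}^\obs\} - \{\bar{Y}^\obs_w - \bar{Y}_{\cdot}^\obs\}$, the bounds $\hat{Q}_j = O_p(N)$, $\bar{Y}_{\cdot}^\obs(j) - \bar{Y}_{\cdot}^\obs = O_p(N^{-1/2})$ and $\bar{Y}^\obs_w - \bar{Y}_{\cdot}^\obs = o_p(N^{-1/2})$ make the cross term and the term involving $\{\bar{Y}^\obs_w - \bar{Y}_{\cdot}^\obs\}^2$ both $o_p(1)$, so $X^2 = \sumJ \hat{Q}_j\{\bar{Y}_{\cdot}^\obs(j) - \bar{Y}_{\cdot}^\obs\}^2 + o_p(1)$. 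Replacing $\hat{Q}_j$ by $N_j/\sigma^2$ is legitimate since $N_j\{\bar{Y}_{\cdot}^\obs(j) - \bar{Y}_{\cdot}^\obs\}^2 = O_p(1)$ and $s^2_{\obs}(j)/\sigma^2 = 1 + o_p(1)$, giving $X^2 = \SSTre/\sigma^2 + o_p(1)$. On the other side, $(J-1)F = \SSTre/\MSRes = \SSTre/\sigma^2 + o_p(1)$ because $\SSTre = O_p(1)$ under $\HN$ and $\MSRes = \sigma^2 + o_p(1)$. Subtracting yields $(J-1)F - X^2 = o_p(1)$.

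The hard part will be the rate $\bar{Y}^\obs_w - \bar{Y}_{\cdot}^\obs = o_p(N^{-1/2})$: the crude bound using only $\hat{Q}_j = O_p(N)$ and $\bar{Y}_{\cdot}^\obs(j) - \bar{Y}_{\cdot}^\obs = O_p(N^{-1/2})$ gives merely $O_p(N^{-1/2})$, which is too weak, since in $X^2$ this difference is multiplied by $\hat{Q}_j = O_p(N)$. The exact cancellation $\sumJ q_j\{\bar{Y}_{\cdot}^\obs(j) - \bar{Y}_{\cdot}^\obs\} = 0$ — valid precisely because equal variances make the limiting weights proportional to the $N_j$'s — is what upgrades the bound to $o_p(N^{-1/2})$ and carries the argument through. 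I would also double-check that Condition 1 delivers both the root-$N$ rate for $\bar{Y}_{\cdot}^\obs(j) - \bar{Y}_{\cdot}^\obs$ under $\HN$ and the consistency of $s^2_{\obs}(j)$, and that $\sigma^2 > 0$.
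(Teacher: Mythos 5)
Your proof is correct and follows essentially the same route as the paper: both arguments use consistency of the $s^2_{\obs}(j)$'s to replace the estimated weights $\hat{Q}_j$ by $N_j/S^2$ (the paper does this by invoking the equivalence $X^2 \asim X_0^2$ already established in the proof of Theorem \ref{thm::chisq-neyman}), observe that with weights proportional to $N_j$ the weighted mean coincides with $\bar{Y}_{\cdot}^\obs$ so that $X^2$ reduces to $\SSTre/S^2$, and then use $\MSRes \rightarrow S^2$ with Slutsky's theorem to identify $(J-1)F$ with the same quantity. Your explicit $o_p(N^{-1/2})$ bound on $\bar{Y}^\obs_w - \bar{Y}_{\cdot}^\obs$ via the exact cancellation $\sumJ (N_j/\sigma^2)\{\bar{Y}_{\cdot}^\obs(j)-\bar{Y}_{\cdot}^\obs\}=0$ simply fills in, with rates, the Slutsky step the paper leaves implicit.
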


Under treatment effect additivity in Definition \ref{def:strict_additivity}, the condition $S_{\cdot}^2(1) = \cdots = S_{\cdot}^2(J)$ holds, and the equivalence between $(J-1) F $ and $ X^2$ guarantees that the Fisher randomization tests using $F$ and $X^2$ have the same asymptotic type I error and power. However, Corollary \ref{coro::equiv} is a large-sample result, and we evaluate it in finite samples in the Supplementary Material.

\section{Simulation}
\label{sec::numerical}

\subsection{Type I error of the \FRT using $F$}
\label{subsec::f}
In this subsection, we use simulation to evaluate the finite sample performance of the \FRT using $F$ under $\HN$. We consider the following three cases, where $\mathcal{N}(\mu, \sigma^2)$ denotes a normal distribution with mean $\mu$ and variance $\sigma^2$. We choose significance level $0.05$ for all tests.

Case 1.
For balanced experiments with sample sizes $N=45$ and $N=120$, we generate potential outcomes under two cases: (1.1) $Y_i(1)\sim \mathcal{N}(0,1)$, $Y_i(2)\sim \mathcal{N}(0, 1.2^2)$, $Y_i(3)\sim \mathcal{N}(0, 1.5^2)$; and (1.2) $Y_i(1)\sim \mathcal{N}(0,1)$, $Y_i(2)\sim \mathcal{N}(0, 2^2)$, $Y_i(3)\sim \mathcal{N}(0, 3^2)$. These potential outcomes are independently generated, and standardized to have zero means.

Case 2.
For unbalanced experiments with sample sizes $(N_1, N_2, N_3) = (10,20,30)$ and $(N_1, N_2, N_3) = (20,30,50)$, we generate potential outcomes under two cases: (2.1) $Y_i(1)\sim \mathcal{N}(0,1)$, $Y_i(2) = 2Y_i(1)$, $Y_i(3) = 3Y_i(1)$; and (2.2) $Y_i(1)\sim \mathcal{N}(0,1)$, $Y_i(2) = 3Y_i(1)$, $Y_i(3) = 5Y_i(1)$. These potential outcomes are standardized to have zero means. In this case, $p_1<p_2<p_3$ and $S^2_{\cdot}(1) < S^2_{\cdot}(2) < S^2_{\cdot}(3).$

Case 3.
For unbalanced experiments with sample sizes $(N_1, N_2, N_3) = (30,20,10)$ and $(N_1, N_2, N_3) = (50,30,20)$, we generate potential outcomes under two cases: (3.1) $Y_i(1)\sim \mathcal{N}(0,1)$, $Y_i(2) = 2Y_i(1)$, $Y_i(3) = 3Y_i(1)$; and (3.2) $Y_i(1)\sim \mathcal{N}(0,1)$, $Y_i(2) = 3Y_i(1)$, $Y_i(3) = 5Y_i(1)$. These potential outcomes are standardized to have zero means. In this case, $p_1>p_2>p_3$ and $S^2_{\cdot}(1) <S^2_{\cdot}(2) < S^2_{\cdot}(3).$

%
%

Once generated, the potential outcomes are treated as fixed constants. Over $2000$ simulated randomizations, we calculate the observed outcomes, and then perform the Fisher randomization test using $F$ to approximate the $p$-values by $2000$ draws of the treatment assignment. The histograms of the $p$-values are shown in Figures \ref{fig:subfigure1}--\ref{fig:subfigure3} corresponding to cases 1--3 above. We also report the rejection rates associated with these cases along with their standard errors in the next few paragraphs. 

In Figure \ref{fig:subfigure1}, the \FRT using $F$ is conservative with $p$-values distributed towards $1$. With larger heterogeneity in the potential outcomes, the histograms of the $p$-values have larger masses near $1$.
For case (1.1), the rejection rates are $0.010$ and $0.018$, and for case (1.2), the rejection rates are $0.023$ and $0.016$, for sample sizes $N=45$ and $N=120$ respectively, with all Monte Carlo standard errors no larger than $0.003$.

In Figure \ref{fig:subfigure2}, the sample sizes under each treatment level are increasing in the variances of the potential outcomes. The \FRT using $F$ is  conservative with $p$-values distributed towards $1$. Similar to Figure \ref{fig:subfigure1}, with larger heterogeneity in the potential outcomes, the $p$-values
have larger masses near $1$.
For case (2.1), the rejection rates are $0.016$ and $0.014$, and for case (2.2), the rejection rates are $0.015$ and $0.011$, for sample sizes $N=45$ and $N=120$ respectively, with all Monte Carlo standard errors no larger than $0.003.$

In Figure \ref{fig:subfigure3}, the sample sizes under different treatment levels are decreasing in the variances of the potential outcomes. 
For case (3.1), the rejection rates are $0.133$ and $0.126$, and for case (3.2), the rejection rates are $0.189$ and $0.146$, for sample sizes $N=45$ and $N=120$ respectively, with all Monte Carlo standard errors no larger than $0.009.$
The \FRT using $F$ does not preserve correct type I error with $p$-values distributed towards $0$. With larger heterogeneity in the potential outcomes, the $p$-values
have larger masses near $0$.

These empirical findings agree with our theory in Section \ref{sec::Neyman}, i.e., if the sample sizes under different treatment levels are decreasing in the sample variances of the observed outcomes, then the \FRT using $F$ may not yield correct type I error under $\HN$.

\begin{figure}[t]
\centering
\subfigure[Balanced experiments, case 1]{%
\includegraphics[width= 0.31 \textwidth]{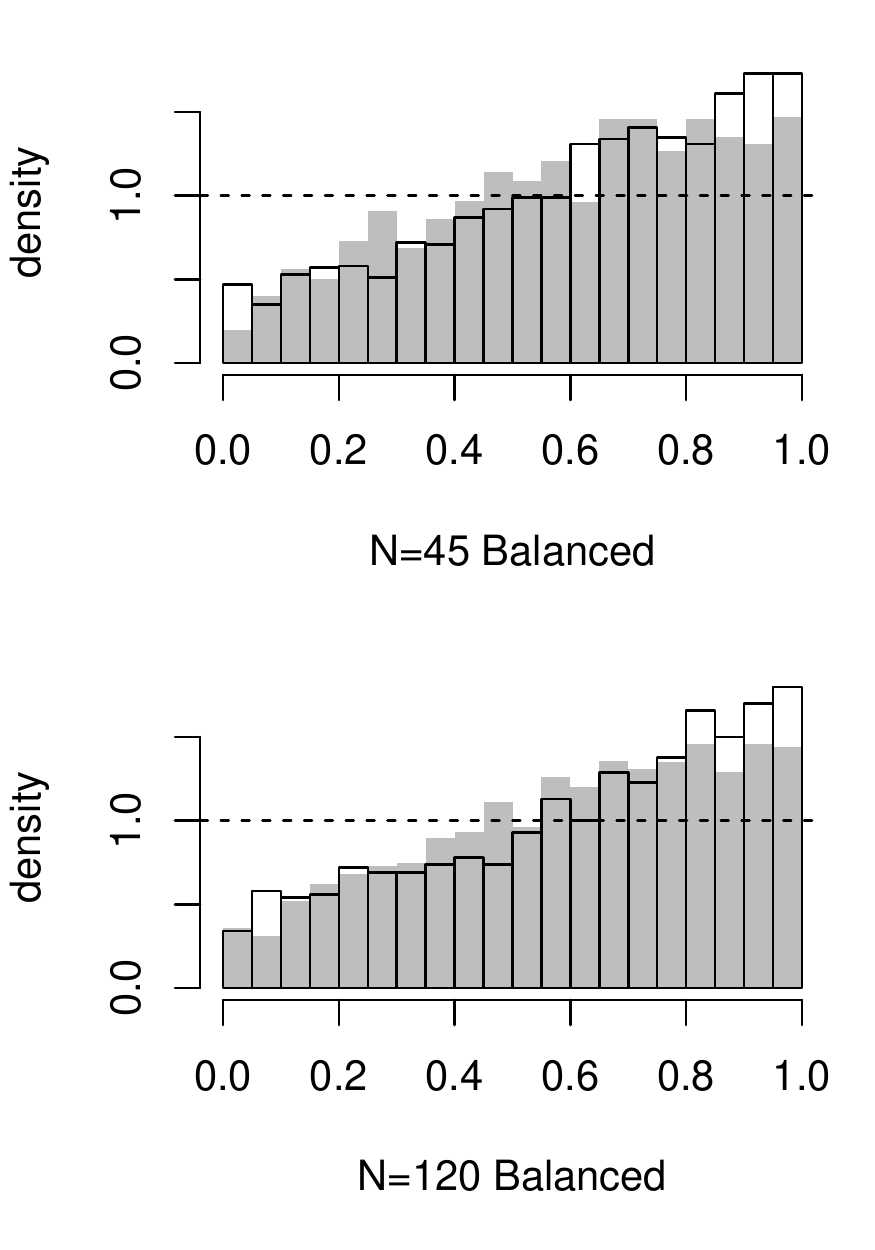}
\label{fig:subfigure1}}
\quad
\subfigure[Unbalanced experiments, case 2]{%
\includegraphics[width=0.31 \textwidth]{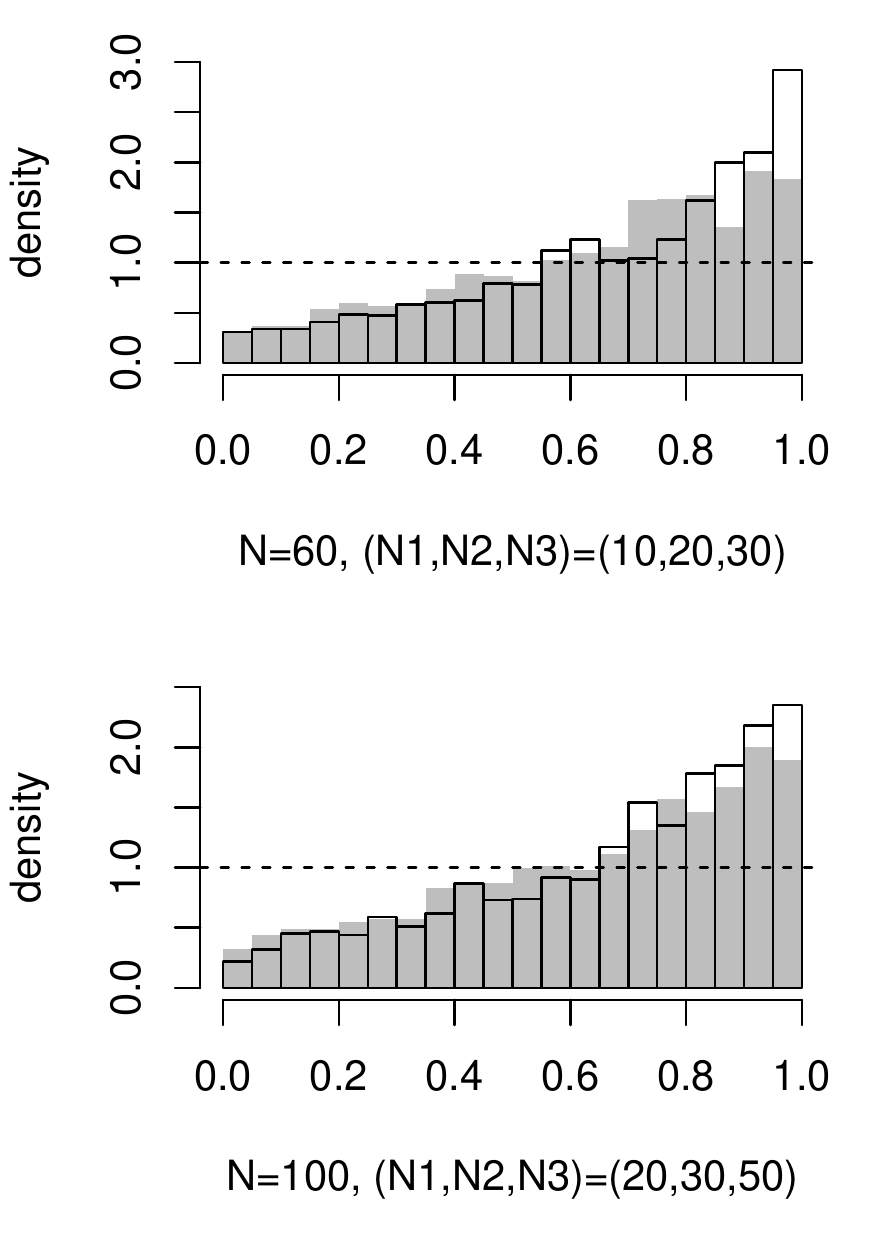}
\label{fig:subfigure2}}
\subfigure[Unbalanced experiments, case 3]{%
\includegraphics[width=0.31 \textwidth]{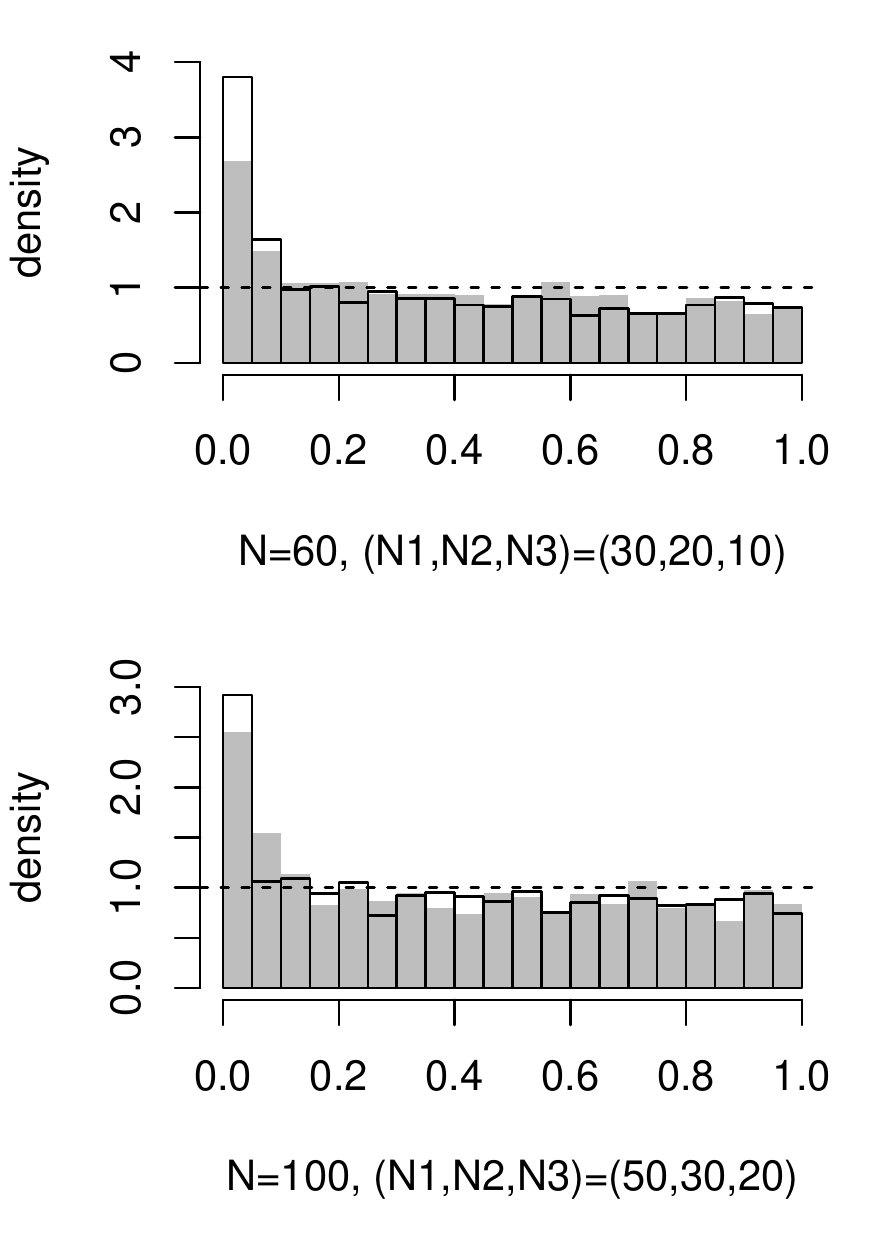}
\label{fig:subfigure3}}
\caption{Histograms of the $p$-values under $\HN$ based on the Fisher randomization tests using $F$, with grey histogram and white histograms for the first and second sub-cases.}
\label{fig:figure}
\end{figure}

\subsection{Type I error of the \FRT using $X^2$}
\label{subsec::x2}

Figure \ref{fig:figure_weight} shows the same simulation as Figure \ref{fig:figure}, but with test statistic $X^2$. 

Figure \ref{fig:subfigure1_weight} shows a similar pattern as Figure \ref{fig:subfigure1}. For case (1.1), the rejection rates are $0.016$ and $0.012$, and for case (1.2), the rejection rates are $0.014$ and $0.010$, for sample sizes $N=45$ and $N=120$ respectively, with all Monte Carlo standard errors no larger than $0.003$.

Figure \ref{fig:subfigure2_weight} shows better performance of the \FRT using $X^2$ than Figure \ref{fig:subfigure2}, with $p$-values closer to uniform. 
For case (2.1), the rejection rates are $0.032$ and $0.038$, and for case (2.2), the rejection rates are $0.026$ and $0.030$, for sample sizes $N=45$ and $N=120$ respectively, with all Monte Carlo standard errors no larger than $0.004.$

Figure \ref{fig:subfigure3_weight} shows much better performance of the \FRT using $X^2$ than Figure \ref{fig:subfigure3}, because the $p$-values are much closer to uniform. For case (3.1), the rejection rates are $0.052$ and $0.042$, and for case (3.2), the rejection rates are $0.048$ and $0.040$, for sample sizes $N=45$ and $N=120$ respectively, with all Monte Carlo standard errors no larger than $0.005.$
This agrees with our theory that the \FRT using $X^2$ can control the asymptotic type I error under $\HN$.


\begin{figure}[t]
\centering
\subfigure[Balanced experiments, case 1]{%
\includegraphics[width=0.31  \textwidth]{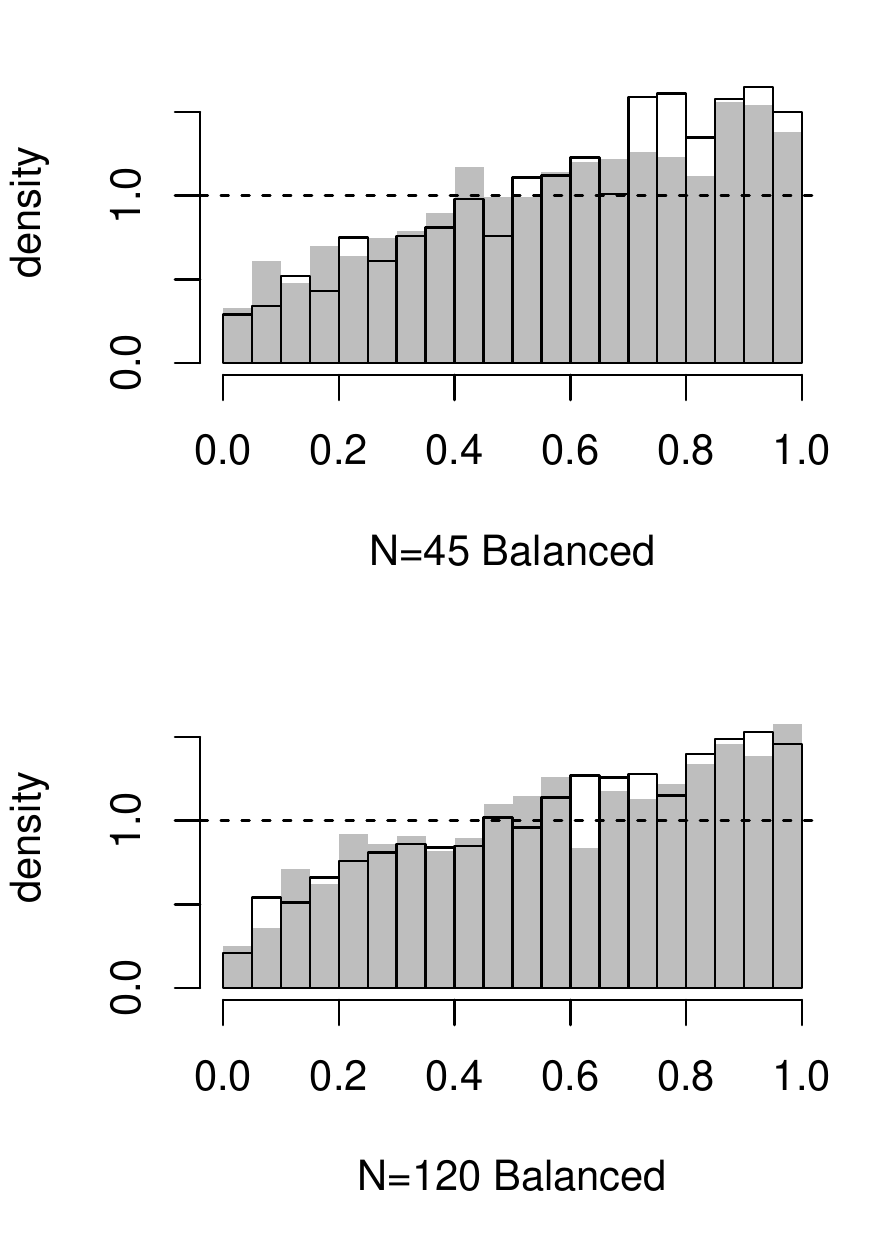}
\label{fig:subfigure1_weight}}
\quad
\subfigure[Unbalanced experiments, case 2]{%
\includegraphics[width=0.31 \textwidth]{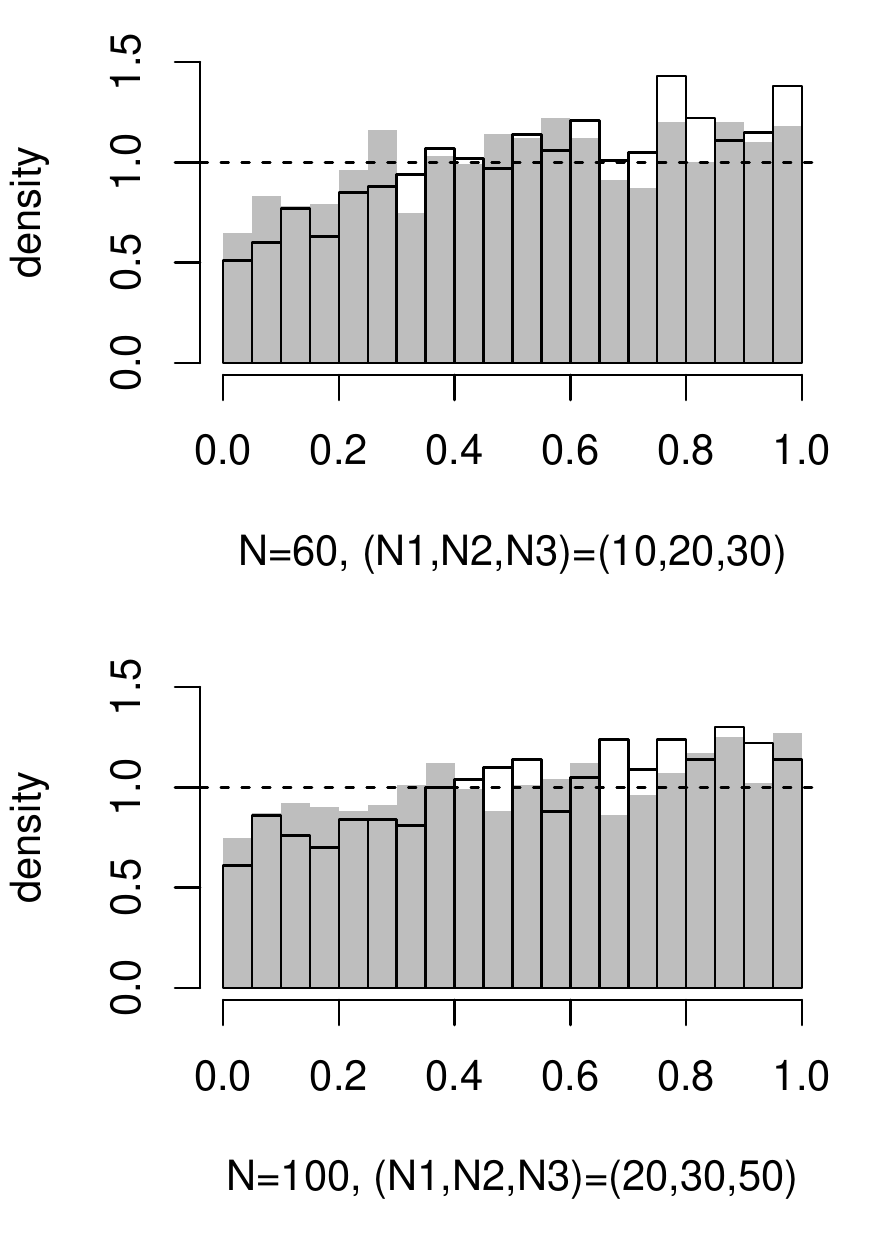}
\label{fig:subfigure2_weight}}
\subfigure[Unbalanced experiments, case 3]{%
\includegraphics[width=0.31  \textwidth]{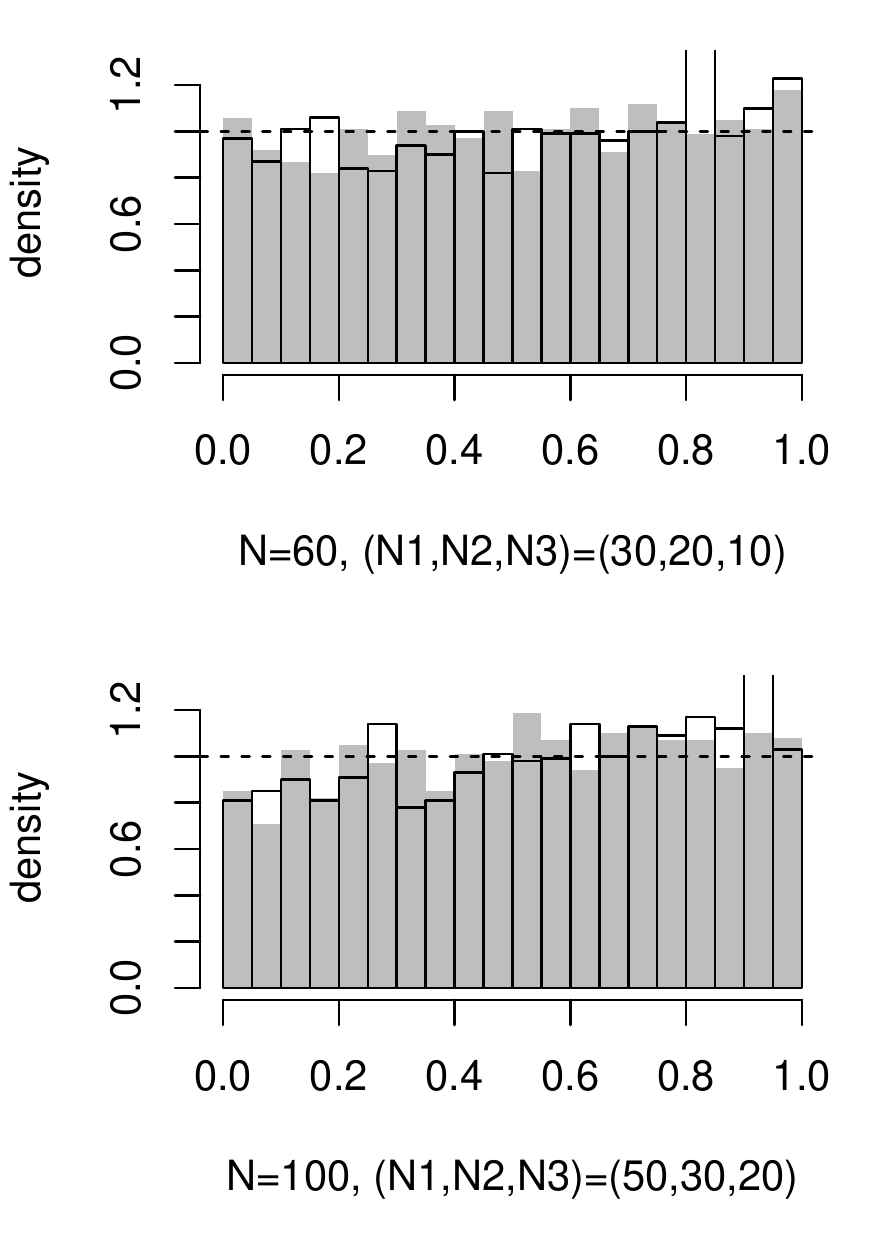}
\label{fig:subfigure3_weight}}
\caption{Histograms of the $p$-values under $\HN$ based on the Fisher randomization tests using $X^2$, with grey histogram and white histograms for the first and second sub-cases.}
\label{fig:figure_weight}
\end{figure}

\subsection{Power comparison of the Fisher randomization tests using $F$ and $X^2$}
\label{subsec::power}
In this subsection, we compare the powers of the Fisher randomization tests using $F$ and $X^2$ under alternative hypotheses. We consider the following cases.

Case 4.
For balanced experiments with sample sizes $N=30$ and $N=45$, we generate potential outcomes from $Y_i(1)\sim \mathcal{N}(0,1)$, $Y_i(2)\sim \mathcal{N}(0, 2^2)$, $Y_i(3)\sim \mathcal{N}(0, 3^2)$. These potential outcomes are independently generated, and shifted to have means $(0,1,2)$.

Case 5.
For unbalanced experiments with sample sizes $(N_1, N_2, N_3) = (10,20,30)$ and $(N_1, N_2, N_3) = (20,30,50)$, we first generate $Y_i(1)\sim \mathcal{N}(0,1)$ and standardize them to have mean zero, and we then generate $Y_i(2) = 3Y_i(1)+1$ and $Y_i(3) = 5Y_i(1) + 2$. In this case, $p_1<p_2<p_3$ and $S^2_{\cdot}(1) < S^2_{\cdot}(2) < S^2_{\cdot}(3).$

Case 6.
For unbalanced experiments with sample sizes $(N_1, N_2, N_3) = (30,20,10)$ and $(N_1, N_2, N_3) = (50,30,20)$, we generate potential outcomes the same as the above case 5. In this case, $p_1>p_2>p_3$ and $S^2_{\cdot}(1) <S^2_{\cdot}(2) < S^2_{\cdot}(3).$

Over $2000$ simulated data sets, we perform the \FRT using $F$ and $X^2$ and obtain the $p$-values by $2000$ draws of the treatment assignment. The histograms of the $p$-values, in Figures \ref{fig:subfigure1_power}--\ref{fig:subfigure3_power}, correspond to cases 4--6 above. The Monte Carlo standard errors for the rejection rates below are all close but no larger than $0.011.$

For case 4, the rejection rates using $X^2$ and $F$ are $0.290$ and $0.376$ respectively with sample size $N=30$, and $0.576$ and $0.692$ respectively with sample size $N=45$. For case 5, the powers using $X^2$ and $F$ are $0.178$ and $0.634$ respectively with sample size $N=60$, and $0.288$ and $0.794$ respectively with sample size $N=100$. Therefore, when the experiments are balanced or when the sample sizes are positively associated with the variances of the potential outcomes, the \FRT using $F$ has larger power than that using $X^2$.  

For case 6, the rejection rates using $X^2$ and $F$ are $0.494$ and $0.355$ respectively with sample size $N=60$, and $0.642$ and $0.576$ respectively with sample size $N=100$. Therefore, when the sample sizes are negatively associated with the variances of the potential outcomes, the \FRT using $F$ has smaller power than that using $X^2$.


\begin{figure}[t]
\centering
\subfigure[Balanced experiments, case 4]{%
\includegraphics[width=0.31 \textwidth]{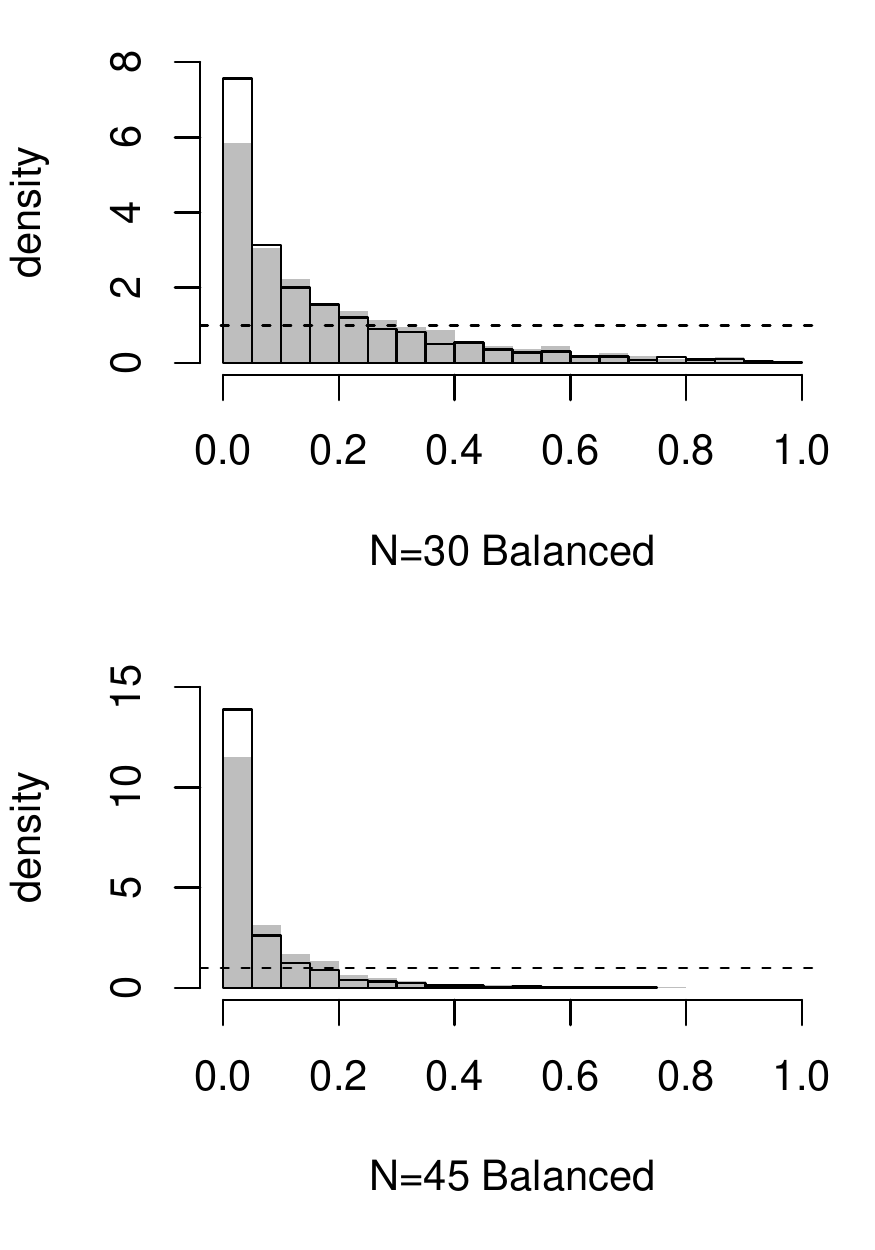}
\label{fig:subfigure1_power}}
\quad
\subfigure[Unbalanced experiments, case 5]{%
\includegraphics[width=0.31 \textwidth]{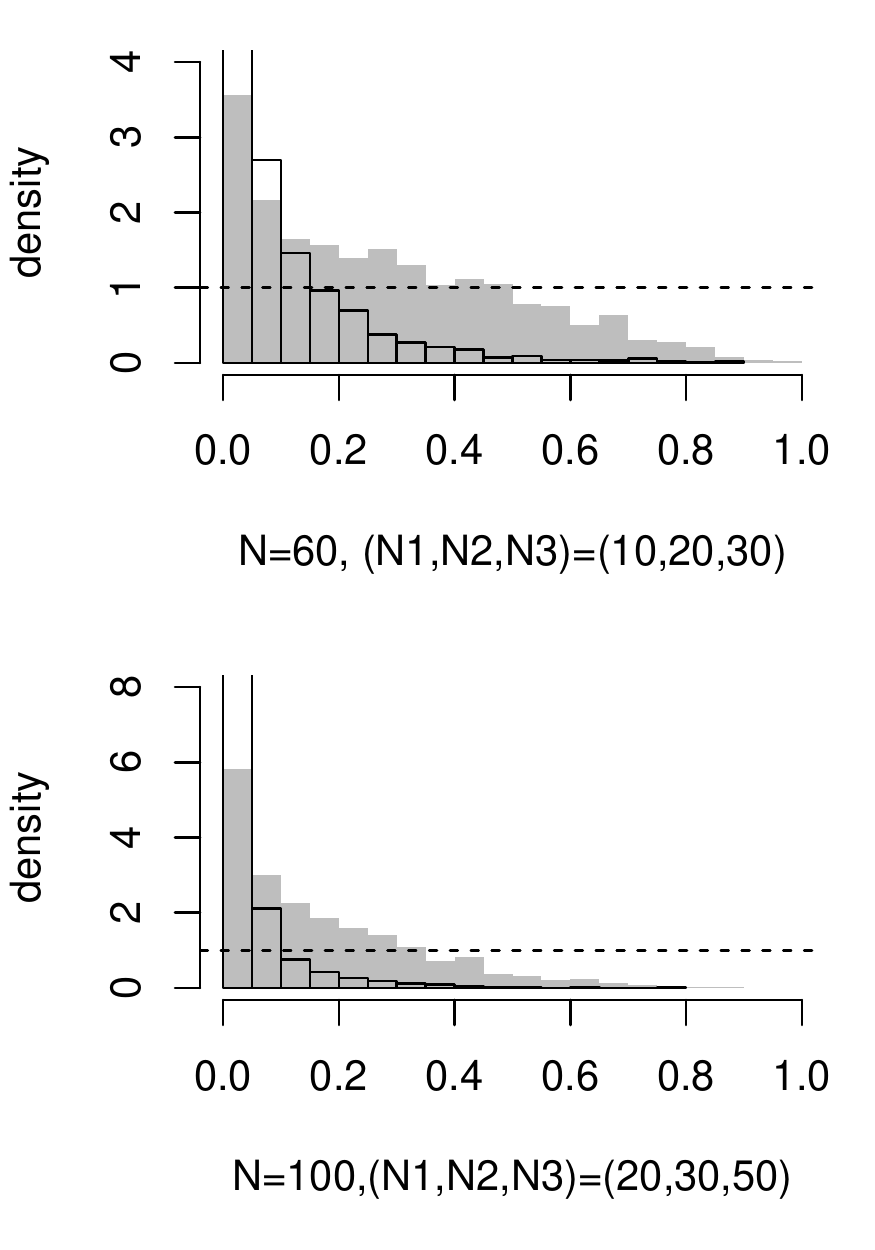}
\label{fig:subfigure2_power}}
\subfigure[Unbalanced experiments, case 6]{%
\includegraphics[width=0.31  \textwidth]{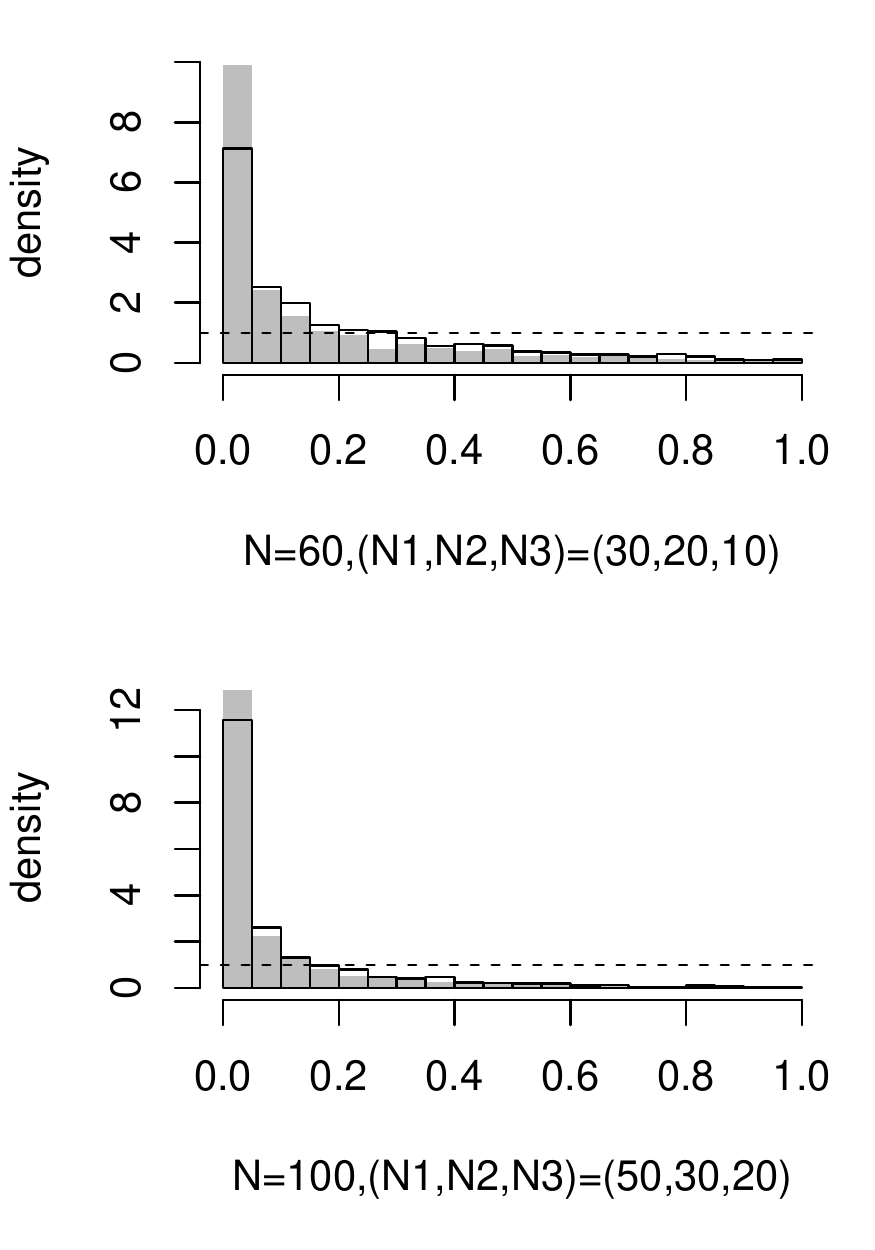}
\label{fig:subfigure3_power}}
\caption{Histograms of the $p$-values under alternative hypotheses based on the Fisher randomization tests using $F$ and $X^2$, with grey histograms for $X^2$ and white histograms for $F$.}
\label{fig:figure_power}
\end{figure}

\subsection{Simulation studies under other distributions and practical suggestions}

In the Supplementary Material, we give more numerical examples. First, we conduct simulation studies in parallel with \S\S \ref{subsec::f}--\ref{subsec::power} with outcomes generated from exponential distributions. The conclusions are nearly identical to those in \S\S \ref{subsec::f}--\ref{subsec::power}, because the finite population central limit theorems holds under mild moment conditions without imposing any distributional assumptions. 

Second, we use two numerical examples to illustrate the conservativeness issue in Theorem \ref{thm::chisq-neyman}. Third, we compare different behaviors of the Fisher randomization tests using $F$ and $X^2$ in two real-life examples.

\section{Discussion}

As shown in the proofs of Theorems \ref{thm::fisher-sharp-null} and \ref{thm::chisq-neyman} in the Supplementary Material, we need to analyze the eigenvalues of the covariance matrix of $\{ \bar{Y}_{\cdot}^\obs(1),  \ldots,  \bar{Y}_{\cdot}^\obs(J) \}$ to obtain the properties of $F$ and $X^2$ for general $J> 2$. Moreover, we consider the case with $J=2$ to gain more insights and to make connections with existing literature. For $j\neq j'$, an unbiased estimator for $\tau(j,j')$ is $\hat{\tau}(j,j') = \bar{Y}_{\cdot}^\obs(j) - \bar{Y}_{\cdot}^\obs(j') $, which has sampling variance $\var\{\hat{\tau}(j,j') \} = S_{\cdot}^2(j) / N_j  +  S_{\cdot}^2(j') / N_{j'}  -  S_{\cdot}^2(j\- j') / (N_j + N_{j'} )$ and an conservative variance estimator $s_\obs^2(j)/N_j +   s_\obs^2(j') / N_{j'}$ \citep{neyman::1923}.

\begin{corollary}
\label{coro::two-treatments}
When $J=2$, the $F$ and $X^2$ statistics reduce to 
$$
F\approx  \frac{     \hat{\tau}^2(1,2)     }{   s_\obs^2(1)/N_2 +   s_\obs^2(2) / N_1   },\quad
X^2 =  \frac{     \hat{\tau}^2(1,2)     }{   s_\obs^2(1)/N_1 +   s_\obs^2(2) / N_2   },
$$
where the approximation of $F$ is due to ignoring the difference between $N$ and $N-2$ and the difference between $N_j$ and $N_j-1$ $(j=1,2)$. Under $\HF$, $F\asim \chi^2_1$ and $X^2\asim \chi^2_1$. Under $\HN$, $F\asim C_1 \chi^2_1$ and $X^2\asim C_2 \chi^2_1$, where
\begin{eqnarray}
\label{eq::constants}
C_1 = \lim_{N\rightarrow + \infty}   \frac{    \var\{\hat{\tau}(1,2) \}    }
{  S_{\cdot}^2(1)/N_2 +   S_{\cdot}^2(2) / N_1       }, \quad
C_2 = \lim_{N\rightarrow + \infty}   \frac{    \var\{\hat{\tau}(1,2) \}    }
{  S_{\cdot}^2(1)/N_1 +   S_{\cdot}^2(2) / N_2       }  \leq 1.
\end{eqnarray}
\end{corollary}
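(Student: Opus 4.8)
The plan has two parts: an exact algebraic reduction for $J=2$, followed by an application of the finite population central limit theorem and Slutsky's theorem. For the reduction, write $\bar{Y}_{\cdot}^\obs = \{N_1\bar{Y}_{\cdot}^\obs(1) + N_2\bar{Y}_{\cdot}^\obs(2)\}/N$, so that $\bar{Y}_{\cdot}^\obs(1) - \bar{Y}_{\cdot}^\obs = (N_2/N)\hat{\tau}(1,2)$ and $\bar{Y}_{\cdot}^\obs(2) - \bar{Y}_{\cdot}^\obs = -(N_1/N)\hat{\tau}(1,2)$; substituting into $\SSTre$ collapses the two terms into $\SSTre = (N_1N_2/N)\hat{\tau}^2(1,2)$. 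Dividing by $\MSRes = \{(N_1-1)s_\obs^2(1) + (N_2-1)s_\obs^2(2)\}/(N-2)$ and replacing $N-2$ by $N$ and each $N_j-1$ by $N_j$ yields the displayed approximation for $F$. Repeating the computation with the weights $\hat{Q}_j$ in place of $N_j$ gives $\bar{Y}_{\cdot}^\obs(1) - \bar{Y}^\obs_w = \{\hat{Q}_2/(\hat{Q}_1+\hat{Q}_2)\}\hat{\tau}(1,2)$, hence $X^2 = \{\hat{Q}_1\hat{Q}_2/(\hat{Q}_1+\hat{Q}_2)\}\hat{\tau}^2(1,2) = \hat{\tau}^2(1,2)/\{s_\obs^2(1)/N_1 + s_\obs^2(2)/N_2\}$, with no approximation needed because $X^2$ has no residual degrees of freedom.

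For the distributional claims, I would apply the finite population central limit theorem under Condition 1 \citep{li2017general} to obtain that $\hat{\tau}(1,2)$ is asymptotically normal with mean $\tau(1,2)$ and variance $V_N = \var\{\hat{\tau}(1,2)\} = S_{\cdot}^2(1)/N_1 + S_{\cdot}^2(2)/N_2 - S_{\cdot}^2(1\-2)/N$, Neyman's variance formula. Since both $\HF$ and $\HN$ force $\tau(1,2)=0$ when $J=2$, it follows that $\hat{\tau}^2(1,2)/V_N \asim \chi^2_1$. The within-group sample variances $s_\obs^2(j)$ are consistent for $S_{\cdot}^2(j)$, so scaling the denominators by $N$ and using $N_j/N\to p_j$ gives $N\{s_\obs^2(1)/N_2 + s_\obs^2(2)/N_1\} \to S_{\cdot}^2(1)/p_2 + S_{\cdot}^2(2)/p_1$ and $N\{s_\obs^2(1)/N_1 + s_\obs^2(2)/N_2\} \to S_{\cdot}^2(1)/p_1 + S_{\cdot}^2(2)/p_2$; by Slutsky's theorem, $F = (\hat{\tau}^2(1,2)/V_N)(V_N/\{s_\obs^2(1)/N_2+s_\obs^2(2)/N_1\}) \asim C_1\chi^2_1$ and likewise $X^2\asim C_2\chi^2_1$, with $C_1$ and $C_2$ the stated limiting ratios. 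Under $\HN$, $S_{\cdot}^2(1\-2)\ge 0$ gives $V_N \le S_{\cdot}^2(1)/N_1 + S_{\cdot}^2(2)/N_2$, hence $C_2\le 1$. Under $\HF$, all potential outcomes equal $Y_i^\obs$, so $S_{\cdot}^2(1\-2)=0$ and $S_{\cdot}^2(1)=S_{\cdot}^2(2)=s_\obs^2$; then $V_N$ has the same limit after scaling as both denominators, $C_1=C_2=1$, and $F\asim X^2\asim\chi^2_1$.

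I expect the main obstacle to be careful bookkeeping of the normalizing rates rather than any conceptual difficulty: $V_N$ and the two denominators all vanish at rate $N^{-1}$, so one must rescale by $N$ before passing to limits and check that Slutsky's theorem applies to the ratio. Two minor technical points should be addressed explicitly: the consistency of $s_\obs^2(j)$ for $S_{\cdot}^2(j)$ must be invoked under $\HN$, not just under $\HF$, which follows from the moment conditions in Condition 1; and the limits defining $C_1$ and $C_2$ require $S_{\cdot}^2(1\-2)$ to converge, which is bounded by Cauchy--Schwarz and may be assumed without loss of generality, or arranged by passing to subsequences.
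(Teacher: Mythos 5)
Your proposal is correct and follows essentially the same route as the paper: the identical algebraic collapse of $\SSTre$ and of the weighted squares to expressions in $\hat{\tau}^2(1,2)$, followed by the finite population central limit theorem with Neyman's variance formula, consistency of $s_\obs^2(j)$, and Slutsky's theorem to get $C_1\chi^2_1$ and $C_2\chi^2_1$, with $C_2\le 1$ from $S_{\cdot}^2(1\-2)\ge 0$. The only cosmetic difference is that you treat $\HF$ as the special case $S_{\cdot}^2(1\-2)=0$, $S_{\cdot}^2(1)=S_{\cdot}^2(2)=s_\obs^2$ within the same CLT argument, whereas the paper simply cites Theorems \ref{thm::fisher-sharp-null} and \ref{thm::chisq-neyman} for that case; your added remarks on rescaling by $N$ and on the convergence of $S_{\cdot}^2(1\-2)$ are sensible but not substantive departures.
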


Depending on the sample sizes and the finite population variances, $C_1$ can be either larger than or smaller than $1$. Consequently, using $F$ in the \FRT can be conservative or anti-conservative under $\HN$. In contrast, $C_2$ is always no larger than $1$, and therefore using $X^2$ in the \FRT is conservative for testing $\HN$. \citet{neyman::1923} proposed to use the square root of $X^2$ to test $\HN$ based on a normal approximation, which is asymptotically equivalent to the \FRT using $X^2$. Both are conservative unless the unit-level treatments are constant.

In practice, for treatment-control experiments, the difference-in-means statistic $\hat{\tau}(1,2)$ was widely used in the \FRT \citep{imbens::2015book}, which, however, can be conservative or anti-conservative for testing $\HN$ as shown in \citet{gail1996design}, \citet{lin2017placement} and \citet{ding2017paradox} using numerical examples. We formally state this result below, recognizing the equivalence between $\hat{\tau}(1,2)$ and $F$ in a two-sided test.

\begin{corollary}
\label{coro::difference-in-means}
When $J=2$, the two-sided \FRT using $\hat{\tau}(1,2)$ is equivalent to using  
$$
T^2 = \frac{ \hat{\tau}^2(1,2)  }{  N s_\obs^2/(N_1N_2)    }    \approx  \frac{\hat{\tau}^2(1,2) }{     s_\obs^2(1)/N_2 +   s_\obs^2(2) / N_1 + \hat{\tau}^2(1,2)/N   },
$$
where the approximation is due to ignoring the difference between $(N,N_1-1,N_2-1)$ and $(N,N_1,N_2)$. Under $\HF$, $T^2\asim F\asim \chi^2_1$, and under $\HN$, $T^2\asim F \asim C_1\chi^2_1$ with $C_1$ defined in \eqref{eq::constants}.   
\end{corollary}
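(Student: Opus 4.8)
The plan is to reduce the entire statement to elementary identities valid when $J=2$ and then import the limiting distributions already established in Theorem~\ref{thm::fisher-sharp-null} (together with Remark~\ref{remark::theorem1_1}) and in Corollary~\ref{coro::two-treatments}. First I would derive the exact form of $T^2$. With $J=2$, writing $\bar{Y}_{\cdot}^\obs=\{N_1\bar{Y}_{\cdot}^\obs(1)+N_2\bar{Y}_{\cdot}^\obs(2)\}/N$ gives $\bar{Y}_{\cdot}^\obs(1)-\bar{Y}_{\cdot}^\obs=(N_2/N)\hat{\tau}(1,2)$ and $\bar{Y}_{\cdot}^\obs(2)-\bar{Y}_{\cdot}^\obs=-(N_1/N)\hat{\tau}(1,2)$, so that $\SSTre=N_1N_2\hat{\tau}^2(1,2)/N$. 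Dividing by $s_\obs^2$ yields the exact identity $T^2=\SSTre/s_\obs^2=\hat{\tau}^2(1,2)/\{Ns_\obs^2/(N_1N_2)\}$, the first displayed expression. The stated approximation then follows by substituting $(N-1)s_\obs^2=\SSTre+\SSRes=N_1N_2\hat{\tau}^2(1,2)/N+(N_1-1)s_\obs^2(1)+(N_2-1)s_\obs^2(2)$ into $Ns_\obs^2/(N_1N_2)$ and replacing the divisors $N-1,N_1-1,N_2-1$ by $N,N_1,N_2$, which produces $s_\obs^2(1)/N_2+s_\obs^2(2)/N_1+\hat{\tau}^2(1,2)/N$.

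Next I would argue the equivalence of the tests. Under $\HF$ the total sum of squares $(N-1)s_\obs^2$ is invariant over the randomization distribution, and $N,N_1,N_2$ are fixed by design, so $T^2$ is a strictly increasing function of $|\hat{\tau}(1,2)|$; hence the one-sided randomization $p$-value based on $T^2$ coincides with the two-sided randomization $p$-value $\pr\{|\hat{\tau}(1,2)|\ge|\hat{\tau}^\obs(1,2)|\}$ based on $\hat{\tau}(1,2)$. Moreover, using $\SSRes=(N-1)s_\obs^2-\SSTre$ one obtains the exact relation $F=(N-2)T^2/\{(N-1)-T^2\}$, equivalently $T^2=(N-1)F/\{(N-2)+F\}$, which is strictly monotone, so the \FRT based on $T^2$ is identical to the \FRT based on $F$ in finite samples as well.

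For the asymptotics, the identity $T^2-F=F(1-F)/\{(N-2)+F\}=O_p(N^{-1})\to0$ holds whenever $F=O_p(1)$, and the latter is guaranteed under both $\HF$ and $\HN$ because $F$ converges in distribution by Theorem~\ref{thm::fisher-sharp-null} and Corollary~\ref{coro::two-treatments}. Hence $T^2\asim F$. Under $\HF$, Theorem~\ref{thm::fisher-sharp-null} with $J=2$ and Remark~\ref{remark::theorem1_1} give $F\asim\chi^2_1$; under $\HN$, Corollary~\ref{coro::two-treatments} gives $F\asim C_1\chi^2_1$ with $C_1$ as in \eqref{eq::constants}; combining either with $T^2\asim F$ completes both distributional claims.

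I do not expect any step to be a genuine obstacle, since everything is exact algebra plus an appeal to earlier results. The only point that needs care is the equivalence step: one must pin down the convention defining the two-sided \FRT on $\hat{\tau}(1,2)$, and then check that the normalizer $Ns_\obs^2/(N_1N_2)$ is truly constant across randomizations under $\HF$, so that the monotone-transformation argument legitimately transfers the $p$-value from $|\hat{\tau}(1,2)|$ to $T^2$.
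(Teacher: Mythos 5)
Your proposal is correct and follows essentially the same route as the paper: the same sum-of-squares decomposition $(N-1)s_\obs^2=(N_1-1)s_\obs^2(1)+(N_2-1)s_\obs^2(2)+N_1N_2\hat{\tau}^2(1,2)/N$, the observation that $s_\obs^2$ is fixed across randomizations so the two-sided test on $\hat{\tau}(1,2)$ is the test on $T^2$, and an appeal to Theorem~\ref{thm::fisher-sharp-null} and Corollary~\ref{coro::two-treatments} for the limiting laws. The only difference is in the last step: you obtain $T^2\asim F$ from the exact monotone relation $F=(N-2)T^2/\{(N-1)-T^2\}$ and the resulting $O_p(N^{-1})$ gap (which as a bonus gives exact finite-sample equivalence of the randomization tests based on $T^2$ and $F$), whereas the paper reaches the same conclusion by noting that $\hat{\tau}(1,2)\rightarrow 0$ in probability under $\HF$ or $\HN$ and applying Slutsky's theorem; both arguments are valid.
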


\begin{remark}
Analogously, under the super population model, \citet{romano1990behavior} showed that the \FRT using $\hat{\tau}(1,2)$ can be conservative or anti-conservative for testing the hypothesis of equal means of two samples. \citet{janssen1997studentized, janssen1999testing} and \citet{chung2013exact} suggested using the studentized statistic, or equivalently $X^2$, to remedy the problem of possibly inflated type I error, which is asymptotically exact under the super population model. 
\end{remark}

After rejecting either $\HF$ or $\HN$, it is often of interest to test pairwise hypotheses, i.e., for $j\neq j'$, $Y_i(j) = Y_i(j')$ for all $i$, or $\bar{Y}_{\cdot}(j) = \bar{Y}_{\cdot}(j'). $ According to Corollaries \ref{coro::two-treatments} and \ref{coro::difference-in-means}, we recommend using the \FRT with test statistic
$
   \hat{\tau}^2(j,j')  /\{  s_\obs^2(j)/N_j +   s_\obs^2(j') / N_{j'}  \} ,
$
which will yield conservative type I error even if the experiment is unbalanced and the variances of the potential outcomes vary across treatment groups.

The analogue between our finite population theory and \citet{chung2013exact}'s super population theory suggests that similar results may also hold for layouts of higher order and other test statistics \citep{pauly2015asymptotic, chung2016asymptotically, chung2016multivariate, friedrich2017permuting}. In more complex experimental designs, often multiple effects are of interest simultaneously, raising the problem of multiple testings \citep{chung2016multivariate}. We leave these to future work.

\section*{Acknowledgement}
The research of Peng Ding was partially funded by Institute of Education Sciences, U.S.A. The authors thank Xinran Li, Zhichao Jiang, Lo-Hua Yuan and Robin Gong for suggestions for early versions of the paper. We thank a reviewer and the Associate Editor for helpful comments. 

%
%
%

\bibliographystyle{biometrika}
\bibliography{ACE}

\newpage

\renewcommand{\theequation}{S\arabic{equation}}
\renewcommand{\thelemma}{S\arabic{lemma}}
\renewcommand{\thesection}{S\arabic{section}}
\renewcommand{\thetheorem}{S\arabic{theorem}}
\renewcommand{\thefigure}{S\arabic{figure}}
\renewcommand{\thetable}{S\arabic{table}}
\renewcommand{\theexample}{S\arabic{example}}

\begin{center}
\Huge Supplementary Materials
\end{center}

\S \ref{sec::proofs} presents the proofs, \S \ref{sec::numerical} contains examples, and \S \ref{sec::morenumerical} gives additional simulation.

\section{Proofs}
\label{sec::proofs}

To prove the theorems, we need the following lemmas about completely randomized experiments.

\begin{lemma}\label{lemma::correlation-structure}
The treatment assignment indicator 
$W_{i}(j) $ is a Bernoulli random variable with mean $p_j = N_j/N$ and variance $p_j(1-p_j)$.
The covariances of the treatment assignment indicators are
$$
\begin{array}{lll}
\cov\{ W_i(j) , W_{i'} (j) \} &=  - p_j  (1-p_j) /(N-1),\quad  &(i\neq i') \\
\cov\{ W_i(j), W_i(j')  \}    &= -p_j p_{j'} ,\quad  &(j\neq j') \\
\cov\{  W_i(j), W_{i'}(j')  \} &= p_j p_{j'}/(N-1), \quad  &(i\neq i', j\neq j') .
\end{array}
$$
\end{lemma}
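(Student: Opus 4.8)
The plan is to compute every moment directly from the randomization distribution $\pr\{(W_1,\ldots,W_N) = (w_1,\ldots,w_N)\} = \prod_{j=1}^J N_j!/N!$ introduced in \S\ref{sec:basic}, exploiting the exchangeability of the $N$ units under complete randomization. First I would fix a treatment level $j$ and note that, by symmetry, unit $i$ is equally likely to occupy any of the $N$ ``slots'' in the assignment, so $\pr\{W_i(j) = 1\} = N_j/N = p_j$; hence $W_i(j)$ is Bernoulli$(p_j)$ with mean $p_j$ and variance $p_j(1-p_j)$. Equivalently, $\{W_i(j): i = 1,\ldots,N\}$ are the inclusion indicators of a simple random sample of size $N_j$ from $N$ units, so the first-line covariance identity for $i\neq i'$ is just the classical negative covariance of sampling without replacement.

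For the off-diagonal entries I would evaluate the relevant pairwise joint probabilities by counting assignment vectors consistent with the stated events. For $i\neq i'$ and the same $j$, $\pr\{W_i(j) = W_{i'}(j) = 1\} = N_j(N_j-1)/\{N(N-1)\}$; subtracting $p_j^2$ and putting everything over the common denominator $N^2(N-1)$ gives $\cov\{W_i(j),W_{i'}(j)\} = -p_j(1-p_j)/(N-1)$. For $j\neq j'$ at the same unit, $W_i(j)W_i(j') \equiv 0$ because $\sum_{k=1}^J W_i(k) = 1$, so $\cov\{W_i(j),W_i(j')\} = 0 - p_j p_{j'} = -p_j p_{j'}$. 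For $i\neq i'$ and $j\neq j'$, $\pr\{W_i(j) = 1, W_{i'}(j') = 1\} = N_j N_{j'}/\{N(N-1)\}$, and subtracting $p_j p_{j'}$ yields $N_j N_{j'}\{1/(N(N-1)) - 1/N^2\} = p_j p_{j'}/(N-1)$.

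There is no genuine obstacle here: the only mildly delicate points are justifying the joint probabilities from the exchangeability of the design (a one-line symmetry argument, or a direct count of the assignment vectors satisfying each event) and carrying the algebraic simplifications through without sign errors. As a consistency check one can verify that the covariances respect the deterministic constraints $\sum_{j} W_i(j) = 1$ and $\sum_{i} W_i(j) = N_j$: for instance $\cov\{W_i(j), \sum_{j'} W_i(j')\} = p_j(1-p_j) - \sum_{j'\neq j} p_j p_{j'} = p_j(1-p_j) - p_j(1-p_j) = 0$, and $\cov\{W_i(j), \sum_{i'} W_{i'}(j)\} = p_j(1-p_j) + (N-1)\cdot\{-p_j(1-p_j)/(N-1)\} = 0$, which together pin down the signs and the $1/(N-1)$ factors.
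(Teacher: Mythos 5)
Your proof is correct, and it is exactly the direct computation that the paper has in mind when it dismisses this lemma with ``the proof is straightforward'': the joint inclusion probabilities under complete randomization are evaluated by symmetry/counting and the products of means subtracted, with all four formulas and the sanity checks against the constraints $\sum_j W_i(j)=1$ and $\sum_i W_i(j)=N_j$ checking out. Since the paper supplies no details of its own, your write-up is simply a filled-in version of the same argument.
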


\begin{proof}[of Lemma \ref{lemma::correlation-structure}]
The proof is straightforward.
\end{proof}

\begin{lemma}\label{lemma:covariance}
Assume $(c_1, \ldots, c_N)$ and $(d_1,\ldots, d_N)$ are two fixed vectors with means $\bar{c}$ and $\bar{d}$, finite population variances $S_c^2$ and $S_d^2$. The finite population covariance is $S_{cd} = (S_c^2 + S_d^2 - S^2_{c\-d} ) /2 $, where $S^2_{c\-d}$ is the finite population variance of $(c_1-d_1, \ldots, c_N-d_N)$. For $j\neq j'$,
$$
\var \left\{  \frac{1}{N_j} \sumN W_i(j) c_i    \right\}   =  \frac{ 1-p_j }{N_j}  S_c^2 ,\quad
\cov\left\{    \frac{1}{N_j} \sumN W_i(j) c_i , \frac{1}{N_{j'}} \sumN W_i(j') d_i    \right \}  = - \frac{  S_{cd} }{N} .
$$
\end{lemma}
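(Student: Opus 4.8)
The plan is to establish the two displayed identities separately, both by direct moment computation using Lemma \ref{lemma::correlation-structure}. \textbf{Step 1 (the polarization identity).} I would first prove $S_{cd} = (S_c^2 + S_d^2 - S^2_{c\-d})/2$ by the elementary observation that $(c_i - d_i) - (\bar{c} - \bar{d}) = (c_i - \bar{c}) - (d_i - \bar{d})$; squaring, summing over $i = 1, \ldots, N$, and dividing by $N-1$ gives $S^2_{c\-d} = S_c^2 - 2S_{cd} + S_d^2$, which rearranges to the claim. This part is purely algebraic and requires no probabilistic input.

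\textbf{Step 2 (the variance).} Expand the variance of $N_j^{-1}\sumN W_i(j) c_i$ as $N_j^{-2}\{\sumN c_i^2\, \var(W_i(j)) + \sum_{i\neq i'} c_i c_{i'}\, \cov(W_i(j), W_{i'}(j))\}$. Substituting $\var(W_i(j)) = p_j(1-p_j)$ and $\cov(W_i(j), W_{i'}(j)) = -p_j(1-p_j)/(N-1)$ from Lemma \ref{lemma::correlation-structure}, and using the combinatorial identity $\sum_{i\neq i'} c_i c_{i'} = (\sumN c_i)^2 - \sumN c_i^2 = N^2\bar{c}^2 - \sumN c_i^2$, the bracket collapses to $\{N/(N-1)\}\sumN (c_i - \bar{c})^2 = N S_c^2$. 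Since $p_j(1-p_j)N/N_j^2 = (1-p_j)/N_j$, this gives the stated formula.

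\textbf{Step 3 (the covariance).} For $j\neq j'$, expand $\cov\{N_j^{-1}\sumN W_i(j) c_i,\ N_{j'}^{-1}\sumN W_i(j') d_i\}$ as $(N_j N_{j'})^{-1}\sum_{i,i'} c_i d_{i'}\, \cov(W_i(j), W_{i'}(j'))$, then split into the diagonal terms $i = i'$, with covariance $-p_j p_{j'}$, and the off-diagonal terms $i\neq i'$, with covariance $p_j p_{j'}/(N-1)$, both from Lemma \ref{lemma::correlation-structure}. The analogous identity $\sum_{i\neq i'} c_i d_{i'} = N^2\bar{c}\bar{d} - \sumN c_i d_i$ reduces the double sum to $-p_j p_{j'} N S_{cd}$, and since $p_j p_{j'} N/(N_j N_{j'}) = 1/N$, we obtain $-S_{cd}/N$.

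There is no serious obstacle here; the whole argument is routine bookkeeping. The only place calling for care is the diagonal-versus-off-diagonal splitting of the double sums together with the signs of the covariances in Lemma \ref{lemma::correlation-structure}, since a slip there would flip the sign of the final answer; I would guard against this by checking the special case $c_i \equiv d_i$ against the variance formula from Step 2.
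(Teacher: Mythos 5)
Your proof is correct and follows essentially the same route as the paper's: a direct expansion of the variance and covariance using the moment structure of the treatment indicators from Lemma \ref{lemma::correlation-structure}, splitting the double sums into diagonal and off-diagonal parts. The only cosmetic difference is that the paper centers $c_i$ and $d_i$ first so the off-diagonal terms simplify via $\sumN (c_i - \bar{c}) = 0$, whereas you keep raw sums and use $\sum_{i \neq i'} c_i c_{i'} = N^2 \bar{c}^2 - \sumN c_i^2$ (and its analogue for $c_i d_{i'}$); both reduce to the same bookkeeping and yield the stated formulas.
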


\begin{proof}
[of Lemma \ref{lemma:covariance}] 
Lemma \ref{lemma:covariance} is known, and its special forms appeared in \citet{kempthorne1955randomization}. We give an elementary proof for completeness. 
Applying Lemma \ref{lemma::correlation-structure}, we have
\begin{eqnarray*}
&&\var \left\{  \frac{1}{N_j} \sumN W_i(j) c_i    \right\}  \\
&=&   \frac{1}{N_j^2}  \var  \left\{   \sumN  W_i(j) (c_i - \bar{c} )    \right\} \\
&=& \frac{1}{N_j^2}  \left\{   \sumN \var\{W_i(j)  \} (c_i-\bar{c} ) ^2
-\mathop{\sum\sum}_{i\neq i'}  \cov\{ W_i(j), W_{i'}(j) \}  (c_i-\bar{c})(c_{i'} - \bar{c}) \right\}  \\
&=&  \frac{1}{N_j^2}  \left\{   \sumN p_j(1-p_j) (c_i-\bar{c}) ^2
-\mathop{\sum\sum}_{i\neq i'}   \frac{ p_j  (1-p_j) } { N-1} (c_i-\bar{c})(c_{i'} - \bar{c}) \right\}  \\
&=& \frac{1}{N_j^2}  \left\{  p_j(1-p_j)   \sumN  (c_i-\bar{c}) ^2
+   \frac{ p_j  (1-p_j) } { N-1}  \sumN    (c_i-\bar{c}) ^2   \right\} \\
&=&   \frac{1-p_j}{N_j}   S_c^2.
\end{eqnarray*}
For $j\neq j'$, applying Lemma \ref{lemma::correlation-structure} again, we have
\begin{eqnarray*}
&&\cov\left\{    \frac{1}{N_j} \sumN W_i(j) c_i , \frac{1}{N_j'} \sumN W_i(j') d_i    \right \}  \\
&=& \frac{1}{N_j N_{j'}} \cov\left\{   \sumN W_i(j) (c_i - \bar{c} ) ,
\sumN W_i(j') ( d_i -\bar{d})   \right\} \\
&=&   \frac{1}{N_j N_{j'}}  \left\{   \sumN \cov\{ W_i(j), W_i(j')\} (c_i - \bar{c} ) ( d_i -\bar{d}) \right. \\
&& ~~~~~~~~~~~~~\left. +
\mathop{\sum\sum}_{i\neq i'} \cov\{ W_i(j), W_{i'}(j')\}  (c_i - \bar{c} ) ( d_{i'} -\bar{d})   \right\} \\
&=&    \frac{1}{N_j N_{j'}}  \left\{  - \sumN p_j p_{j'}  (c_i - \bar{c} ) ( d_i -\bar{d}) +
\mathop{\sum\sum}_{i\neq i'} \frac{ p_j p_{j'}}{N-1}  (c_i - \bar{c} ) ( d_{i'} -\bar{d})    \right\} \\
&=& -  \frac{1}{N_j N_{j'}}  \left\{  p_j p_{j'}   \sumN  (c_i - \bar{c} ) ( d_i -\bar{d})   +
 \frac{ p_j p_{j'}}{N-1}   \sumN  (c_i - \bar{c} ) ( d_i -\bar{d})  \right\} \\
&=& - S_{cd}/N.
\end{eqnarray*}
\end{proof}

\begin{proof}
[of Theorem \ref{thm::fisher-sharp-null}]
Under $\HF$,  $\{ Y_i^\obs: i=1,\ldots, N  \}$ and $ \text{SSTot} = (N-1) s_{\obs}^2$ are fixed. Because $\{ Y_i^\obs: W_i(j) = 1  \}$ is a simple random sample from the finite population $\{ Y_i^\obs: i=1,\ldots, N  \}$, the sample mean $\bar{Y}_{\cdot}^\obs (j)$ is unbiased for the population mean $\bar{Y}_{\cdot}^\obs$, and the sample variance $ s^2_{\obs}(j) $ is unbiased for the population variance $ s_{\obs}^2$. Therefore,
$$
E( \SSRes)  = \sumJ  E \left\{  (N_j - 1) s^2_{\obs}(j) \right\}
= \sumJ  (N_j-1) s_{\obs}^2 = (N-J) s_{\obs}^2,
$$
which further implies that
$$
E( \SSTre) = \text{SSTot} - E( \SSRes) =  (N-1) s_{\obs}^2 - (N-J) s_{\obs}^2 = (J-1) s_{\obs}^2.
$$

Applying Lemma \ref{lemma:covariance}, we have
\begin{eqnarray}
\var\{\bar{Y}_{\cdot}^\obs(j)\} = \frac{1-p_j}{N_j}     s_{\obs}^2,\quad
\cov\{   \bar{Y}_{\cdot}^\obs(j) ,   \bar{Y}_{\cdot}^\obs(j' )    \} = - \frac{ s_{\obs}^2 }{ N  } . 
\label{eq::var-cov}
\end{eqnarray}
Therefore, the finite population central limit theorem \citep[][Theorem 5]{li2017general}, coupled with the variance and covariance formulae in \eqref{eq::var-cov}, implies
$$
V\equiv
\begin{bmatrix}
N_1^{1/2} \{  \bar{Y}_{\cdot}^\obs(1)  - \bar{Y}_{\cdot}^\obs \} \\
N_2^{1/2} \{  \bar{Y}_{\cdot}^\obs(2)  - \bar{Y}_{\cdot}^\obs \} \\
\vdots\\
N_J^{1/2} \{  \bar{Y}_{\cdot}^\obs(J)  - \bar{Y}_{\cdot}^\obs \}
\end{bmatrix}
\asim
\mathcal{N}_J\left[  0, s_{\obs}^2
\begin{pmatrix}
1-p_1 & -p_1^{1/2}p_2^{1/2} &\cdots& -p_1^{1/2} p_J^{1/2}\\
-p_2^{1/2} p_1^{1/2} & 1-p_2 &\cdots& -p_2^{1/2} p_J^{1/2}\\
\vdots&&&\vdots\\
-p_J^{1/2} p_1^{1/2}&-p_J^{1/2} p_2^{1/2}&\cdots& 1-p_J
\end{pmatrix}
\right] ,
$$
where $\mathcal{N}_J$ denotes a $J$-dimensional normal random vector. The above asymptotic covariance matrix can be simplified as $  s_\obs^2 ( I_J - qq ^\T  ) \equiv  s^2_\obs P $, where $I_J$ is the $J\times J$ identity matrix, and $q = (p_1^{1/2}, \ldots, p_J^{1/2})^\T $. The matrix $P$ is a projection matrix of rank $J-1$, which is orthogonal to the vector $q$. Consequently, the treatment sum of squares can be represented as
$
\SSTre = V^\T  V \asim  \chi_{J-1}^2 s_{\obs}^2,
$
and the F statistic can be represented as
\begin{eqnarray*}
F &=&  \frac{ \SSTre/(J-1) }{  \{ (N-1) s_{\obs}^2 - \SSTre\} / (N-J) }  \asim \frac{ \chi_{J-1}^2s_{\obs}^2 / (J-1)    }{  \{  (N-1)s_{\obs}^2 - \chi_{J-1}^2s_{\obs}^2  \} / (N-J) }  \\
&=&  \frac{  \chi^2_{J-1}/ (J-1) }{   \{  (N-1) - \chi^2_{J-1}  \}/(N-J) } \asim F_{J-1, N-J}  \asim \chi_{J-1}^2 / (J-1).
\end{eqnarray*}
\end{proof}

{\it Proof of Theorem \ref{thm::sampling-F-Neyman}.}
First, because $\bar{Y}_{\cdot}^\obs(j) = \sumN  W_i(j) Y_i(j)/N_j $, Lemma \ref{lemma:covariance} implies that $\bar{Y}_{\cdot}^\obs(j) $ has mean $\bar{Y}_{\cdot}(j)$ and variance $(1-p_j)S_{\cdot}^2(j)/ N_j$, and  
\begin{eqnarray*}
\cov \{\bar{Y}_{\cdot}^\obs(j) , \bar{Y}_{\cdot}^\obs(j') \}
&=&    \cov\left\{  \frac{1}{N_j }  \sumN  W_i(j) Y_i(j)   ,  \frac{1}{ N_{j'} } \sumN  W_i(j ' ) Y_i(j ' )     \right\} \\
&=&    - \frac{1}{2N} \{  S_{\cdot}^2(j) + S_{\cdot}^2(j') - S_{\cdot}^2(j\-j')  \} .
\end{eqnarray*}
Therefore,  
\begin{eqnarray*}
\var (\bar{Y}_{\cdot}^\obs)
&=& \sumJ  p_j^2  \var\{ \bar{Y}_{\cdot}^\obs(j) \}
+  \mathop{\sum\sum}_{j  \neq j'} p_j p_{j'} \cov\{  \bar{Y}_{\cdot}^\obs(j), \bar{Y}_{\cdot}^\obs(j')   \} \\
&=&  \sumJ  p_j^2 \frac{ 1-p_j} { N_j }S_{\cdot}^2(j)
-  \mathop{\sum\sum}_{j  \neq j'} p_j p_{j'}  \frac{1}{2N} \{  S_{\cdot}^2(j) + S_{\cdot}^2(j') - S_{\cdot}^2(j\-j')  \} \\
&=& \frac{1}{N}  \left\{    \sumJ  p_j  ( 1-p_j)  S_{\cdot}^2(j)   \right. \\
&& ~~~~~~~~~~ \left. - \frac{1}{2}        \mathop{\sum\sum}_{j  \neq j'} p_j p_{j'}   S_{\cdot}^2(j)
-  \frac{1}{2}        \mathop{\sum\sum}_{j  \neq j'} p_j p_{j'}   S_{\cdot}^2(j')
+ \frac{1}{2}        \mathop{\sum\sum}_{j  \neq j'} p_j p_{j'}   S_{\cdot}^2(j\-j')
\right\}.
\end{eqnarray*}
Because
\begin{eqnarray*}
 \mathop{\sum\sum}_{j  \neq j'} p_j p_{j'}   S_{\cdot}^2(j)  &=&  \sumJ  p_j  ( 1-p_j)  S_{\cdot}^2(j),\\
 \mathop{\sum\sum}_{j  \neq j'} p_j p_{j'}   S_{\cdot}^2(j') &=& \sumJ  p_{j'}  ( 1-p_{j'})  S_{\cdot}^2(j') = \sumJ  p_j  ( 1-p_j)  S_{\cdot}^2(j),
\end{eqnarray*}
the variance of $\bar{Y}_{\cdot}^\obs $ reduces to
$$
\var (\bar{Y}_{\cdot}^\obs)  =(2N)^{-1}  \mathop{\sum\sum}_{j  \neq j'} p_j p_{j'}   S_{\cdot}^2(j\-j') = \Delta / N.
$$

Second,  
\begin{eqnarray*}
\cov\{    \bar{Y}_{\cdot}^\obs(j), \bar{Y}_{\cdot}^\obs  \}&=&
p_j \var\{ \bar{Y}_{\cdot}^\obs(j)\}  +  \sum_{j'  \neq j}  p_{j'} \cov\{ \bar{Y}_{\cdot}^\obs(j), \bar{Y}_{\cdot}^\obs(j' )   \} \\
&=& \frac{1}{N}(1-p_j) S_{\cdot}^2(j) -   \frac{1}{2N}\sum_{j'  \neq j}  p_{j'}     \{  S_{\cdot}^2(j) + S_{\cdot}^2(j') - S_{\cdot}^2(j\-j')  \} .
\end{eqnarray*}
We further define  $\sum_{j'  \neq j}  p_{j'}   S_{\cdot}^2(j\-j') = \Delta_j.$
Because
$$
\sum_{j'  \neq j}  p_{j'}    S_{\cdot}^2(j) = (1-p_j)  S_{\cdot}^2(j) ,\quad
\sum_{j'  \neq j}  p_{j'}    S_{\cdot}^2(j') = S^2 - p_jS_{\cdot}^2(j),
$$
the covariance between $\bar{Y}_{\cdot}^\obs(j)$ and $\bar{Y}_{\cdot}^\obs$ reduces to
\begin{eqnarray*}
\cov\{    \bar{Y}_{\cdot}^\obs(j), \bar{Y}_{\cdot}^\obs  \}
&=&  (2N)^{-1}  \left\{    2(1-p_j) S_{\cdot}^2(j)  -  (1-p_j)  S_{\cdot}^2(j)   - S^2 +  p_jS_{\cdot}^2(j) +  \Delta_j \right\}  \\
&=&  (2N)^{-1}   \left\{ S_{\cdot}^2(j) - S^2 + \Delta_j \right\}.
\end{eqnarray*}

Third, $\bar{Y}_{\cdot}^\obs(j) -  \bar{Y}_{\cdot}^\obs $ has mean $ \bar{Y}_{\cdot}(j)  - \sumJ p_j \bar{Y}_{\cdot}(j)$ and variance
\begin{eqnarray*}
\var \{\bar{Y}_{\cdot}^\obs(j) -  \bar{Y}_{\cdot}^\obs\} &=&
\var\{\bar{Y}_{\cdot}^\obs(j) \} + \var ( \bar{Y}_{\cdot}^\obs ) - 2\cov\{  \bar{Y}_{\cdot}^\obs(j), \bar{Y}_{\cdot}^\obs   \}   \\
&=& \frac{1}{N}   \left\{
\frac{1-p_j}{p_j}  S_{\cdot}^2(j)  + \Delta - S_{\cdot}^2(j) + S^2 - \Delta_j
\right\}.
\end{eqnarray*}

Finally, the expectation of the treatment sum of squares is
\begin{eqnarray*}
E( \SSTre) &=& E \left[   \sumJ N_j    \{\bar{Y}_{\cdot}^\obs(j) -  \bar{Y}_{\cdot}^\obs\} ^2 \right] \\
&=&
\sumJ N_j  \left\{ \bar{Y}_{\cdot}(j)  - \sumJ p_j \bar{Y}_{\cdot}(j) \right\}^2+
 \sumJ p_j  \left\{
\frac{1-p_j}{p_j}  S_{\cdot}^2(j)  +  \Delta - S_{\cdot}^2(j) + S^2 - \Delta_j
\right\} ,
\end{eqnarray*}
which follows from the mean and variance formulas of $\bar{Y}_{\cdot}^\obs(j) -  \bar{Y}_{\cdot}^\obs$. Some algebra gives
\begin{eqnarray*}
E( \SSTre )&=&\sumJ N_j  \left\{ \bar{Y}_{\cdot}(j)  - \sumJ p_j \bar{Y}_{\cdot}(j) \right\}^2+
\sumJ (1-p_j) S_{\cdot}^2(j) +  \Delta - S^2 + S^2 - 2 \Delta \\
&=& \sumJ N_j  \left\{ \bar{Y}_{\cdot}(j)  - \sumJ p_j \bar{Y}_{\cdot}(j) \right\}^2+ \sumJ (1-p_j ) S_{\cdot}^2(j) - \Delta .
\end{eqnarray*}

Under $\HN$, i.e., $  \bar{Y}_{\cdot}(1) = \cdots  = \bar{Y}_{\cdot}(J)$, or, equivalently, $\bar{Y}_{\cdot}(j)  - \sumJ p_j \bar{Y}_{\cdot}(j)  = 0$ for all $j$, the expectation of the treatment sum of squares further reduces to
$$
E( \SSTre)  = \sumJ (1-p_j) S_{\cdot}^2(j) - \Delta . 
$$
Because $\{ Y_i^\obs: W_i(j) = 1  \}$ is a simple random sample from $\{ Y_i(j): i=1,2,\ldots, N  \}$, the sample variance is unbiased for the population variance, i.e.,
$
E \{  s_\obs^2(j)  \}
= S_{\cdot}^2(j).
$
Therefore, the mean of the residual sum of squares is
$$
E( \SSRes) = E  \left\{   (N_j - 1)   s_\obs^2(j)   \right\}
= \sumJ (N_j - 1)  S_{\cdot}^2(j) .    
$$
This completes the proof.  \hfill{$\square$} 

\begin{proof}
[of Corollary \ref{coro::1}]
Additivity implies $S^2 = S_{\cdot}^2(j)$ for all $j$ and $\Delta = 0$, and the conclusions follow.
\end{proof}

\begin{proof}
[of Corollary \ref{coro::2}]
For balanced designs, $p_j = 1/J, N_j  =N/J$ and $S^2 = \sumJ S_{\cdot}^2(j) / J$, and therefore Theorem 2 implies
\begin{eqnarray*}
E( \SSRes)&=&  \frac{N-J}{J}  \sumJ S_{\cdot}^2(j) = (N-J )S^2,\\
E( \SSTre) &=&   \frac{N}{J  } \sumJ  \{  \bar{Y}_{\cdot}(j) - \bar{Y}_{\cdot}(\cdot)  \}^2 +   (J-1) S^2 - \Delta.
\end{eqnarray*}
Moreover, under $\HN$, $E( \SSRes)$ is unchanged, and $E(\SSTre) =  (J-1) S^2 - \Delta.$
Therefore, the expectation of the mean treatment squares is no larger than the expectation of the mean residual squares, because
$
E( \MSRes) - E( \MSTre)   = \Delta / (J-1) \geq 0.
$
\end{proof}

\begin{proof}
[of Corollary \ref{coro::unbalanced}]
Under $\HN$,
\begin{eqnarray*}
E( \MSRes) - E( \MSTre)
&=& \sumJ  \left( \frac{N_j - 1}{ N- J} - \frac{1-p_j}{J - 1} \right)  S_{\cdot}^2(j) + \frac{\Delta}{J - 1} \\
&=& \frac{  (N-1)J }{  (J-1)(N-J) } \sumJ (p_j - J^{-1})S_{\cdot}^2(j) +  \frac{ \Delta} { J-1} .
\end{eqnarray*}
\end{proof}

To prove Theorem \ref{thm::chisq-neyman}, we need the following two lemmas: the first is about the quadratic form of the multivariate normal distribution, and the second, due to \citet{schur1911bemerkungen}, provides an upper bound for the largest eigenvalue of the element-wise product of two matrices. The proof of the first follows from straightforward linear algebra, and the proof of the second can be found in \citet[][Corollary 3]{styan1973hadamard}. Below we use $A*B$ to denote the element-wise product of $A$ and $B$, i.e, the $(i,j)$-th element of $A*B$ is the product of the $(i,j)$-th elements of $A$ and $B$, $A_{ij}B_{ij}.$

\begin{lemma}
\label{lemma::mvn}
If $X\sim \mathcal{N}_J(0, A)$, then $X^\T  B X \sim \sumJ \lambda_j  \xi_j$, where the $\xi_j$'s are iid $\chi_1^2$, and the $\lambda_j$'s are eigenvalues of $BA.$ 
\end{lemma}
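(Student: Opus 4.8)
\textbf{Proof proposal for Lemma~\ref{lemma::mvn}.}

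The plan is to reduce the quadratic form $X^\T B X$ to a diagonal form by a linear change of variables and then recognize the resulting expression as a weighted sum of independent $\chi^2_1$ variables. First I would write $A = A^{1/2} A^{1/2}$ using the symmetric positive semi-definite square root of $A$ (if $A$ is singular, one works on the range of $A$, or equivalently replaces $A^{1/2}$ by any matrix $L$ with $LL^\T = A$; since $X$ lives in the column space of $A$ almost surely this causes no loss). Setting $Z = A^{-1/2} X$ when $A$ is invertible, or more robustly writing $X = A^{1/2} Z$ with $Z \sim \mathcal{N}_J(0, I_J)$, gives $X^\T B X = Z^\T (A^{1/2} B A^{1/2}) Z$. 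The matrix $M := A^{1/2} B A^{1/2}$ is symmetric, so by the spectral theorem $M = O \Lambda O^\T$ for an orthogonal $O$ and a diagonal matrix $\Lambda = \mathrm{diag}(\lambda_1, \ldots, \lambda_J)$.

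Next I would exploit orthogonal invariance of the standard normal: $\tilde Z := O^\T Z$ is again $\mathcal{N}_J(0, I_J)$, and $X^\T B X = \tilde Z^\T \Lambda \tilde Z = \sum_{j=1}^J \lambda_j \tilde Z_j^2$. Since the components $\tilde Z_j$ are independent standard normals, the $\xi_j := \tilde Z_j^2$ are independent and identically distributed $\chi^2_1$ random variables, which gives the stated representation $X^\T B X \sim \sum_{j=1}^J \lambda_j \xi_j$. The final bookkeeping step is to identify the $\lambda_j$ as the eigenvalues of $BA$: the $\lambda_j$ are by construction the eigenvalues of $A^{1/2} B A^{1/2}$, and this matrix is similar to $B A$ via $A^{1/2} B A^{1/2} = A^{-1/2}(B A) A^{1/2}$ (in the invertible case), so the two matrices share the same spectrum; in the singular case one notes that $A^{1/2} B A^{1/2}$ and $B A$ have the same nonzero eigenvalues, with the remaining eigenvalues being zero for both, so the multiset $\{\lambda_1, \ldots, \lambda_J\}$ agrees.

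There is no real obstacle here, as the paper itself notes the proof "follows from straightforward linear algebra." The only point requiring a little care is the treatment of a singular covariance $A$ --- one should either restrict attention to the support of $X$ or use a decomposition $A = LL^\T$ with $L$ not necessarily square --- but this does not affect the conclusion because the eigenvalues of $BA$ outside the range of $A$ are zero and contribute $\lambda_j = 0$ terms to the sum. I would state the argument in the nonsingular case for transparency and add one sentence handling the degenerate case.
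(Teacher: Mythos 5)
Your proposal is correct and is exactly the ``straightforward linear algebra'' argument the paper alludes to without writing out: factor $A$ as $A^{1/2}A^{1/2}$ (or $LL^\T$), diagonalize $A^{1/2}BA^{1/2}$, invoke rotational invariance of the standard normal, and identify the spectrum of $A^{1/2}BA^{1/2}$ with that of $BA$, including your correct treatment of singular $A$. The only implicit hypothesis worth flagging is that $B$ must be symmetric for the spectral-theorem step (it is in the paper's application, where $B$ is the projection matrix $I_J - q_w q_w^\T$ and $A = P*R$ is positive semidefinite, which also guarantees the $\lambda_j$ are real).
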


\begin{lemma}
\label{lemma::schur}
If $A$ is positive semidefinite and $B$ is a correlation matrix, then the maximum eigenvalue of $A*B$ does not exceed the maximum eigenvalue of $A.$
\end{lemma}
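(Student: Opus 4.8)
The plan is to reduce the bound to two classical facts: the Schur product theorem, that the element-wise product of two positive semidefinite matrices is positive semidefinite, and the defining property of a correlation matrix, that its diagonal entries all equal $1$. These combine into a short operator-inequality argument, with the genuine content residing entirely in the Schur product theorem.

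Let $\mu = \lambda_{\max}(A)$. Since $A$ is symmetric positive semidefinite, the matrix $\mu I_J - A$ has eigenvalues $\mu - \lambda_j(A) \ge 0$ and is therefore positive semidefinite. Applying the Schur product theorem to the two positive semidefinite matrices $\mu I_J - A$ and $B$ gives $(\mu I_J - A)*B \succeq 0$. Expanding by bilinearity of the element-wise product, $(\mu I_J - A)*B = \mu (I_J * B) - (A*B)$; because $B$ is a correlation matrix, $I_J * B = I_J$, so this becomes $\mu I_J - A*B \succeq 0$, i.e. $A*B \preceq \mu I_J$. By the Rayleigh characterization of the largest eigenvalue of a symmetric matrix, $M \preceq c I_J$ forces $\lambda_{\max}(M) \le c$, so this is exactly $\lambda_{\max}(A*B) \le \mu = \lambda_{\max}(A)$, as claimed. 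Note that $A*B$ is itself positive semidefinite by the same theorem, so its eigenvalues are real and the quantity $\lambda_{\max}(A*B)$ is well defined.

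The only nontrivial ingredient, and the step I expect to carry the real content, is the Schur product theorem. I would prove it by spectral decomposition: write the positive semidefinite matrices as $C = \sum_k u_k u_k^\T$ and $D = \sum_\ell w_\ell w_\ell^\T$, taking $u_k = \lambda_k^{1/2} v_k$ from the eigendecomposition of $C$ and similarly for $D$. The elementary entrywise identity $(u u^\T)*(w w^\T) = (u * w)(u*w)^\T$, whose $(i,j)$ entry equals $u_i w_i u_j w_j$ on both sides, then yields $C*D = \sum_{k,\ell}(u_k * w_\ell)(u_k * w_\ell)^\T$, a sum of rank-one positive semidefinite matrices and hence positive semidefinite. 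Since the paper invokes this lemma only as a tool and a reference to \citet{styan1973hadamard} is already supplied, this self-contained derivation suffices.
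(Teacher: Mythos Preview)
Your argument is correct. The paper does not supply its own proof of this lemma; it simply refers the reader to \citet[][Corollary 3]{styan1973hadamard}, so there is no in-paper argument to compare against. The route you take---apply the Schur product theorem to $\mu I_J - A$ and $B$, use that $I_J * B = I_J$ for a correlation matrix $B$, and read off $A*B \preceq \mu I_J$---is precisely the standard short proof of this inequality and is essentially what one finds in Styan's paper; nothing further is needed.
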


\begin{proof}
[of Theorem \ref{thm::chisq-neyman}]
We first prove the result under $\HN$, and then view the result under $\HF$ as a special case. 

Let $Q_j = N_j/S_{\cdot}^2(j)$ for $j=1,\ldots,J$, and $Q = \sumJ Q_j$ be their sum. Define $q_w^\T = (Q_1^{1/2}, \ldots, Q_J^{1/2})/ Q^{1/2}$, and $P_w=  I_J -  q_w q_w^\T$ is a projection matrix of rank $J-1.$ Let $\bar{Y}^\obs_{w0} = Q^{-1} \sumJ  Q_j \bar{Y}_{\cdot}^\obs(j)$ be a weighted average of the means of the observed outcomes. According to \citet[][Proposition 3]{li2017general}, $s_\obs^2(j) - S_{\cdot}^2 (j) \rightarrow 0$ in probability $(j=1,\ldots,J)$. 
By Slutsky's Theorem, $X^2$ has the same asymptotic distribution as
$$
X_0^2 = \sumJ   Q_j  \left\{      \bar{Y}_{\cdot}^\obs(j) -    \bar{Y}^\obs_{w0}   \right\}^2 .
$$
Define $\rho_{jk}$ as the finite population correlation coefficient of potential outcomes $\{Y_i(j)\}_{i=1}^N$ and $\{ Y_i(k) \}_{i=1}^N$, and $R$ as the corresponding correlation matrix with $(j, k)$-th element $\rho_{jk}$. 
The finite population central limit theorem \citep[][Theorem 5]{li2017general} implies
\begin{eqnarray*}
V_0 & \equiv&
\begin{bmatrix}
Q_1^{1/2}  \{  \bar{Y}_{\cdot}^\obs(1)  - \bar{Y}_{\cdot}(1) \} \\
Q_2^{1/2} \{  \bar{Y}_{\cdot}^\obs(2)  -\bar{Y}_{\cdot}(2) \} \\
\vdots\\
Q_J^{1/2}  \{  \bar{Y}_{\cdot}^\obs(J)  - \bar{Y}_{\cdot}(J) \}
\end{bmatrix} \\
&\asim&
\mathcal{N}_J\left[  0,
\begin{pmatrix}
1-p_1 & -p_1^{1/2}p_2^{1/2} \rho_{12} &\cdots& -p_1^{1/2} p_J^{1/2}  \rho_{1J}\\
-p_2^{1/2} p_1^{1/2}  \rho_{21} & 1-p_2 &\cdots& -p_2^{1/2} p_J^{1/2} \rho_{2J} \\
\vdots&&&\vdots\\
-p_J^{1/2} p_1^{1/2 } \rho_{J1}&-p_J^{1/2} p_2^{1/2} \rho_{J2}&\cdots& 1-p_J
\end{pmatrix}
=  P * R
\right] ,
\end{eqnarray*}
where $P = I_J -qq^\T $ is the projection matrix defined in the proof of Theorem \ref{thm::fisher-sharp-null}. In the above, the mean and covariance matrix of the random vector $V_0$ follow directly from Lemmas \ref{lemma::correlation-structure} and \ref{lemma:covariance}.


Under $\HN$ with $\bar{Y}_{\cdot}(1) = \cdots = \bar{Y}_{\cdot}(J)$, we can verify that
$$
X_0^2 = \sumJ Q_j  \{  \bar{Y}_{\cdot}^\obs(j)  - \bar{Y}_{\cdot}(j) \}^2  
-
\frac{1}{  Q  }\left[  \sumJ Q_j \{  \bar{Y}_{\cdot}^\obs(j)  - \bar{Y}_{\cdot}(j) \} \right]^2,
$$
which can be further rewritten as a quadratic form \citep[cf.][]{chung2013exact} 
$$
X_0^2 = V_0^\T   (   I_J -  q_w q_w ^\T    )  V_0 \equiv  V_0^\T  P_w V_0.
$$
According to Lemma \ref{lemma::mvn}, $X_0^2$ has asymptotic distribution $\sum_{j=1}^{J-1} \lambda_j \xi_j$, where the $\lambda_j$'s are the $J-1$ nonzero eigenvalues of $P_w(P*R)$. The summation is from $j=1$ to $J-1$ because $P_w(P*R)$ has rank at most $J-1$. The eigenvalues $(\lambda_1, \ldots, \lambda_{J-1} ) $  are all smaller than or equal to the largest eigenvalue of $P*R$, because $P_w$ is a projection matrix.
According to Lemma \ref{lemma::schur}, the maximum eigenvalue of the element-wise product $P*R$ is no larger than the maximum eigenvalue of $P$, which is $1$. Therefore, $X_0^2 \asim \sum_{j=1}^{J-1} \lambda_j \xi_j$, where $\lambda_j \leq 1$ for all $j.$ Because the $\chi^2_{J-1}$ can be represented as $\xi_1  + \cdots + \xi_{J-1}$, it is clear that the asymptotic distribution of $X_0^2$ is stochastically dominated by $\chi^2_{J-1}$.

When performing the Fisher randomization test, we treat all observed outcomes as fixed, and consequently, the randomization distribution is essentially the repeated sampling distribution of $X^2$ under $Y_i(1) = \cdots = Y_i(J) = Y_i^\obs$. This restricts $S^2_{\cdot}(j)$ to be constant, and the correlation coefficients between potential outcomes to be $1$. Correspondingly, $P_w = P, R = 1_J 1_J^\T $, and the asymptotic covariance matrix of $V_0$ is $P$. Applying Lemma \ref{lemma::mvn} again, we know that the asymptotic randomization distribution of $X^2$ is $\chi^2_{J-1}$, because $PP=P$ has $J-1$ nonzero eigenvalues and all of them are $1$. 

Mathematically, the randomization distribution under $\HF$ is the same as the permutation distribution. Therefore, applying \citet{chung2013exact} yields the same result for $X^2$ under $\HF$.
\end{proof}

\begin{proof}
[of Corollary \ref{coro::equiv}]
As shown in the proof of Theorem \ref{thm::chisq-neyman}, $X^2$ is asymptotically equivalent to $X_0^2$, and therefore we need only to show the equivalence between $(J-1) F$ and $X^2_0.$ If $S_{\cdot}^2(1) = \cdots = S_{\cdot}^2(J) = S^2$, then $\bar{Y}^\obs_{w0}  = \bar{Y}_{\cdot}^\obs$, and
$$
X_0^2 =  \frac{  \sum_{j=1}^J  \{   \bar{Y}_{\cdot}^\obs(j) - \bar{Y}_{\cdot}^\obs   \}^2   }{    S^2   } = \frac{\SSTre}{ S^2 }.
$$
Because $\MSRes  = \sum_{j=1}^J  (N_j - 1)  s_\obs^2(j) / (N-J) $ converges to $S^2$ in probability \citep[][Proposition 3]{li2017general}, Slutsky's Theorem implies 
$$
(J-1)F =  \frac{\SSTre}{ \MSRes } \asim  \frac{\SSTre}{ S^2 }.
$$
Therefore, $(J-1) F \asim X_0^2\asim  X^2$.
\end{proof}

\begin{proof}
[of Corollary \ref{coro::two-treatments}]
First, we discuss $F.$ Because $\bar{Y}_{\cdot}^\obs = p_1 \bar{Y}_{\cdot}^\obs(1) + p_2 \bar{Y}_{\cdot}^\obs(2)$, we have
$$
 \bar{Y}_{\cdot}^\obs(1) - \bar{Y}_{\cdot}^\obs  = p_2 \hat{\tau}(1,2),\qquad
 \bar{Y}_{\cdot}^\obs(2) - \bar{Y}_{\cdot}^\obs  = -p_1 \hat{\tau}(1,2).
$$
The treatment sum of squares reduces to
$$
\SSTre =  N_1 \left\{   \bar{Y}_{\cdot}^\obs(1) - \bar{Y}_{\cdot}^\obs  \right\}^2 
+ N_2  \left\{  \bar{Y}_{\cdot}^\obs(2) - \bar{Y}_{\cdot}^\obs  \right\}^2  = N p_1 p_2 \hat{\tau}^2(1,2),$$
and the residual sum of squares reduces to $\SSRes = (N_1-1)  s_\obs^2(1) + (N_2-1)s_\obs^2(2)$.
Therefore, the $F$ statistic reduces to
$$
F = \frac{  \SSTre }{  \SSRes / (N-2)} 
= \frac{  \hat{\tau}^2(1,2)  }{      \frac{N (N_1-1) }{ (N-2)N_1N_2  }   s_\obs^2(1) + \frac{N(N_2-1)}{ (N-2)N_1 N_2 }   s_\obs^2(2)  }
\approx \frac{     \hat{\tau}^2(1,2)     }{   s_\obs^2(1)/N_2 +   s_\obs^2(2) / N_1   },
$$
where the approximation follows from ignoring the difference between $N$ and $N-2$ and the difference between $N_j$ and $N_j-1$ $(j=1,2)$. Following from Theorem \ref{thm::fisher-sharp-null} or proving it directly, we know that $F \asim F_{1,N-2} \asim \chi^2_1$ under $\HF$. However, under $\HN$, \citet{neyman::1923}, coupled with the finite population central limit theorem \citep[][Theorem 5]{li2017general}, imply
$$
\frac{  \hat{\tau}(1,2)  }{   \left\{   \frac{ S_{\cdot}^2(1)}{N_1}  + \frac{S_{\cdot}^2(2)}{N_2} - \frac{S_{\cdot}^2(1\- 2)}{N}  \right\}^{1/2}     } \asim \mathcal{N}(0,1),
$$ 
and $ s_\obs^2(j) \rightarrow S_{\cdot}^2(j)$ in probability $(j=1,2)$. Therefore, the asymptotic distribution of $F$ under $\HN$ is
$
F \asim C_1 \chi^2_1,
$
where 
$$
C_1 = \lim_{N\rightarrow + \infty}   \frac{     S_{\cdot}^2(1) / N_1  +  S_{\cdot}^2(2) / N_2  -  S_{\cdot}^2(1\- 2) / N    }
{  S_{\cdot}^2(1)/N_2 +   S_{\cdot}^2(2) / N_1       } .
$$

Second, we discuss $X^2$. Because
$$
\bar{Y}_w^\obs =  \left\{   \frac{N_1}{   s_\obs^2(1)  } \bar{Y}_{\cdot}^\obs(1)    + \frac{ N_2 }{   s_\obs^2(2)    }    \bar{Y}_{\cdot}^\obs(2)     \right\} \Big / 
\left\{   \frac{N_1}{   s_\obs^2(1)  } + \frac{ N_2 }{   s_\obs^2(2)    }      \right\} , 
$$
we have
\begin{eqnarray*}
\bar{Y}_{\cdot}^\obs(1) -  \bar{Y}_w^\obs  &=& \frac{N_2}{   s_\obs^2(2)   }   \hat{\tau}^2(1,2)  
\Big / 
\left\{   \frac{N_1}{   s_\obs^2(1)  } + \frac{ N_2 }{   s_\obs^2(2)    }      \right\}, \\
\bar{Y}_{\cdot}^\obs(2) -  \bar{Y}_w^\obs  &=&  - \frac{N_1}{   s_\obs^2(1)   }   \hat{\tau}^2(1,2)  
\Big / 
\left\{   \frac{N_1}{   s_\obs^2(1)  } + \frac{ N_2 }{   s_\obs^2(2)    }      \right\}.
\end{eqnarray*}
Therefore, the $X^2$ statistic reduces to
\begin{eqnarray*}
X^2&=& \left\{   \frac{N_1}{   s_\obs^2(1)  }  \frac{N_2^2}{   s_\obs^4(2)   }   \hat{\tau}^2(1,2)  
+  \frac{ N_2 }{   s_\obs^2(2)    }    \frac{N_1^2}{   s_\obs^4(1)  }   \hat{\tau}^2(1,2)     \right\}
\Big / 
\left\{   \frac{N_1}{   s_\obs^2(1)  } + \frac{ N_2 }{   s_\obs^2(2)    }      \right\}^2 \\
&=&  \frac{     \hat{\tau}^2(1,2)     }{   s_\obs^2(1)/N_1 +   s_\obs^2(2) / N_2   }.
\end{eqnarray*}
Following from Theorem \ref{thm::chisq-neyman} or proving it directly, we know that $X^2 \asim \chi^2_1 $ under $\HF$. However, under $\HN$, we can use an argument similar to that for $F$ and obtain
$
X^2 \asim C_2 \chi^2_1,
$
where
$$
C_2 = \lim_{N\rightarrow + \infty}   \frac{     S_{\cdot}^2(1) / N_1  +  S_{\cdot}^2(2) / N_2  -  S_{\cdot}^2(1\- 2) / N    }
{  S_{\cdot}^2(1)/N_1 +   S_{\cdot}^2(2) / N_2       }  \leq 1.
$$
The constant $C_2$ is smaller than or equal to $1$ with equality holding if the limit of $S_{\cdot}^2(1\- 2) $ is zero, i.e., the unit-level treatment effects are constant asymptotically.
\end{proof}

\begin{proof}
[of Corollary \ref{coro::difference-in-means}]
In the Fisher randomization test, $s_\obs^2$ is fixed, and therefore using $\hat{\tau}(1,2)$ is equivalent to using $T^2$. Using simple algebra similar to \citet{ding2017paradox}, we have the following decomposition
$$
(N - 1)s_{\obs}^2 = (N_1 - 1) s_\obs^2(1) + (N_2 - 1) s_\obs^2(2) + N_1N_2 \hat{\tau}(1,2)/N, 
$$
which implies the equivalent formula of $T^2$ in Corollary \ref{coro::difference-in-means}. Under $\HF$ or $\HN$, $\hat{\tau}(1,2) \rightarrow 0$ in probability, which coupled with Slutsky's Theorem, implies the asymptotic equivalence $T^2\asim F.$
\end{proof}

\section{Numerical Examples} 
\label{sec::numerical}

\begin{example}
\label{eg::example}
We consider $J=3$, sample sizes $N_1=120, N_2 = 80$ and $ N_3 = 40$. We generate the first set of potential outcomes from
\begin{eqnarray}
Y_i(1)\sim \mathcal{N}(0,1), Y_i(2)= 3Y_i(1), Y_i(3)=5Y_i(1), \label{eg::1}
\end{eqnarray}
and the second set of potential outcomes from
\begin{eqnarray}
Y_i(1)\sim \mathcal{N}(0,1), Y_i(2)\sim \mathcal{N}(0,3^2), Y_i(3)\sim \mathcal{N}(0,5^2). \label{eg::2}
\end{eqnarray}
After generating the potential outcomes, we center the $Y_i(j)$'s by subtracting the mean to make $\bar{Y}_{\cdot}(j)=0$ for all $j$ so that $\HN$ holds. Figure \ref{fg::sampling} shows the distributions of $X^2$ over repeated sampling of the treatment assignment vector $(W_1,\ldots, W_N)$ for potential outcomes generated from \eqref{eg::1} and \eqref{eg::2}. The true sampling distributions under both cases are stochastically dominated by $\chi^2_2$.
Under \eqref{eg::1}, the correlation coefficients between the potential outcomes are $1$; whereas under \eqref{eg::2}, the correlation coefficients are $0$.
With less correlated potential outcomes, the gap between the true distribution and $\chi^2_2$ becomes larger.

\begin{figure}[t]
\centering
\includegraphics[width = 0.7 \textwidth]{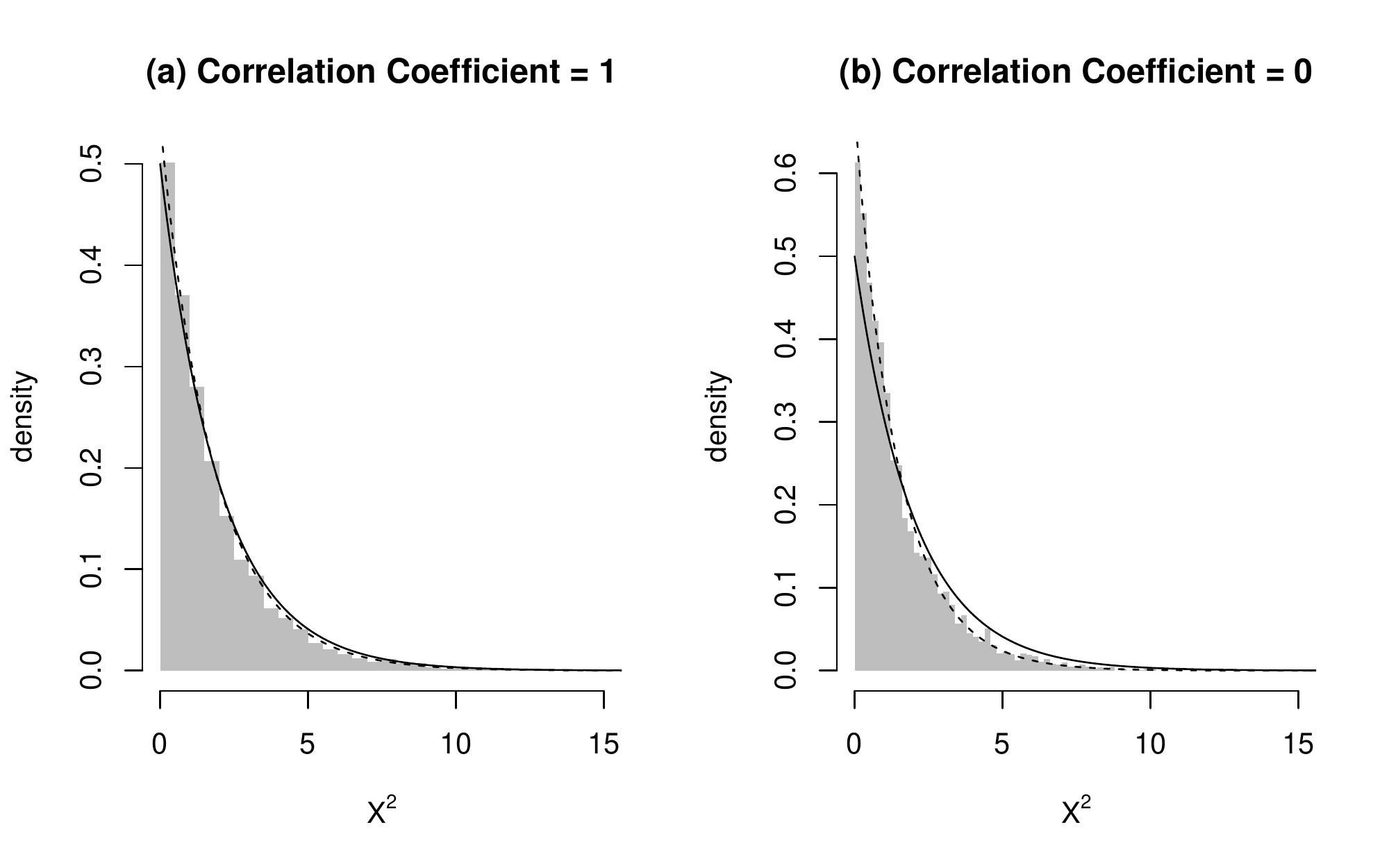}
\caption{Distributions of $X^2$. The histograms are the sampling distributions, the dotted lines are the asymptotic distributions, and the solid lines are the $\chi^2_{2}$ distribution. }\label{fg::sampling}
\end{figure}

\end{example}

\begin{example}
We use an example from \citet[][Exercise 3.15]{montgomery2000design} with $4$ treatment levels. The sample variances and the sample sizes differ for the four treatment levels, as shown in Table \ref{tb::monto}.
The $p$-values of the \FRT using $F$ and $X^2$ are
$0.003$ and $0.010$, respectively. If we choose a stringent size, say $\alpha=0.01$, then the evidence against the null is strong from the first test, but the evidence is weak from the second test. If our interest is $\HN$, then the different strength of evidence may be due to the different variances and sample sizes of the treatment groups. Because of this, we recommend making decision based on the \FRT using
$X^2$.

 \begin{table}[t]
\centering
\caption{A randomized experiment with $J=4$}\label{tb::monto}
\begin{tabular}{ccccc}
\hline
 & 1 & 2 & 3 & 4\\
 \hline
observed outcome
&58.2&56.3&50.1&52.9\\
&57.2&54.5&54.2&49.9\\
&58.4&57.0&55.4&50.0\\
&55.8&55.3&&51.7\\
&54.9&&&\\
\hline
sample size&5&4&3&4\\
mean&
56.9&55.8&53.2&51.1\\
variance&
2.3&1.2&7.7&2.1\\
\hline
\end{tabular}
\end{table}

\end{example}

\begin{example}
We reanalyze the data from \citet{angrist2009incentives}, which contain a control group and $3$ treatment groups designed to improve academic performance among college freshmen. Table \ref{tb::angrist} summaries the sample sizes, means and variances of the final grades under $4$ treatment groups. The $p$-values of the \FRT using $F$ and $X^2$ are
$0.058$ and $0.045$, respectively. The Fisher randomization tests using $F$ and $X^2$ give different conclusions at the commonly used significance level of $0.05$. In this unbalanced experiment, the \FRT using $F$ is less powerful.

\begin{table}[t]
\centering
\caption{A randomized experiment with $J=4$, where control, sfp, ssp and sfsp denote the four treatment groups.} \label{tb::angrist}
\begin{tabular}{rrrrr}
  \hline
 & control & sfp & ssp & sfsp \\ 
  \hline
 sample size & 854 & 219 & 212 & 119 \\ 
  mean & 63.86 & 65.83 & 64.13 & 66.10 \\ 
  variance & 144.97 & 124.45 & 159.76 & 114.33 \\ 
   \hline
\end{tabular}
\end{table}

\end{example}

\section{More Simulation With Nonnormal Outcomes} 
\label{sec::morenumerical}

\subsection{Type I error of the \FRT using $F$}
\label{subsec::f-S}
In this subsection, we use simulation to evaluate the finite sample performance of the \FRT using $F$ under $\HN$. We consider the following three cases, where $\mathcal{E}$ denotes an exponential distribution with mean $1$. 

Case S1.
For balanced experiments with sample sizes $N=45$ and $N=120$, we generate potential outcomes under two cases: (S1.1) $Y_i(1)\sim \mathcal{E} $, $Y_i(2)\sim \mathcal{E} / 0.7$, $Y_i(3)\sim \mathcal{E} / 0.5 $; and (S1.2) $Y_i(1)\sim \mathcal{E}$, $Y_i(2)\sim \mathcal{E} / 0.5$, $Y_i(3)\sim \mathcal{E} / 0.3$. These potential outcomes are independently generated, and standardized to have zero means.

Case S2.
For unbalanced experiments with sample sizes $(N_1, N_2, N_3) = (10,20,30)$ and $(N_1, N_2, N_3) = (20,30,50)$, we generate potential outcomes under two cases: (S2.1) $Y_i(1)\sim \mathcal{E}$, $Y_i(2) = 2Y_i(1)$, $Y_i(3) = 3Y_i(1)$; and (S2.2) $Y_i(1)\sim \mathcal{E}$, $Y_i(2) = 3Y_i(1)$, $Y_i(3) = 5Y_i(1)$. These potential outcomes are standardized to have zero means. In this case, $p_1<p_2<p_3$ and $S^2_{\cdot}(1) < S^2_{\cdot}(2) < S^2_{\cdot}(3).$

Case S3.
For unbalanced experiments with sample sizes $(N_1, N_2, N_3) = (30,20,10)$ and $(N_1, N_2, N_3) = (50,30,20)$, we generate potential outcomes under two cases: (S3.1) $Y_i(1)\sim \mathcal{E}$, $Y_i(2) = 1.2Y_i(1)$, $Y_i(3) = 1.5Y_i(1)$; and (S3.2) $Y_i(1)\sim \mathcal{E} $, $Y_i(2) = 1.5Y_i(1)$, $Y_i(3) = 2Y_i(1)$. These potential outcomes are standardized to have zero means. In this case, $p_1>p_2>p_3$ and $S^2_{\cdot}(1) <S^2_{\cdot}(2) < S^2_{\cdot}(3).$

We follow \S \ref{subsec::f} and obtain the same conclusions about the Fisher randomization test using $F$, because Figures \ref{fig:figure} and \ref{fig:figure-S} exhibit the same pattern.

In Figure \ref{fig:subfigure1-S}, for case (S1.1), the rejection rates are $0.022$ and $0.014$, and for case (S1.2), the rejection rates are $0.030$ and $0.030$, for sample sizes $N=45$ and $N=120$ respectively. 
In Figure \ref{fig:subfigure2-S}, for case (S2.1), the rejection rates are $0.018$ and $0.024$, and for case (2.2), the rejection rates are $0.026$ and $0.018$, for sample sizes $N=45$ and $N=120$ respectively. The Monte Carlo standard errors are all close to but no larger than $0.003.$

In Figure \ref{fig:subfigure3-S}, for case (S3.1), the rejection rates are $0.076$ and $0.086$, and for case (S3.2), the rejection rates are $0.108$ and $0.109$, for sample sizes $N=45$ and $N=120$ respectively, with all Monte Carlo standard errors no larger than $0.008.$
In these two cases, the \FRT using $F$ does not preserve correct type I error.

\begin{figure}[t]
\centering
\subfigure[Balanced experiments, case S1]{%
\includegraphics[width=0.31\textwidth]{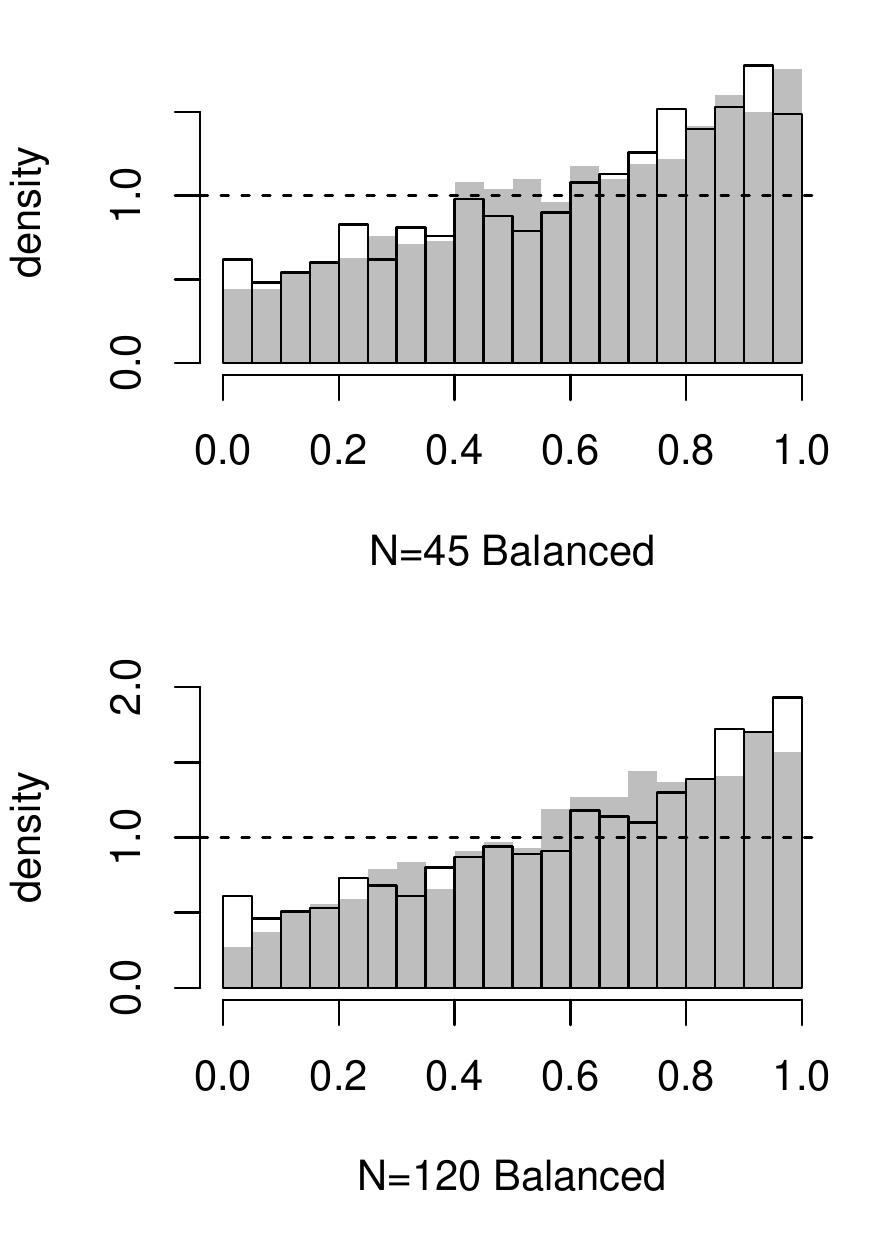}
\label{fig:subfigure1-S}}
\quad
\subfigure[Unbalanced experiments, case S2]{%
\includegraphics[width=0.31\textwidth]{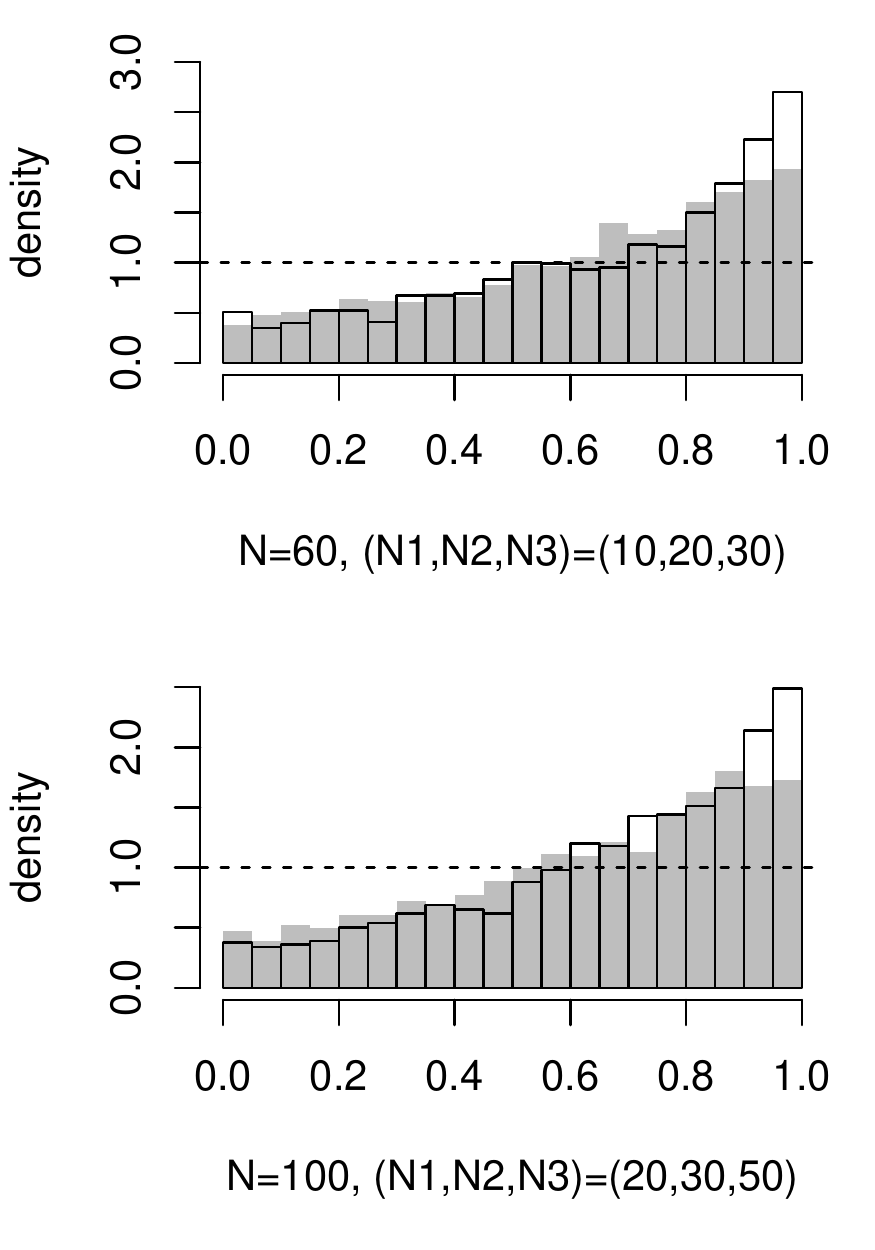}
\label{fig:subfigure2-S}}
\subfigure[Unbalanced experiments, case S3]{%
\includegraphics[width=0.31\textwidth]{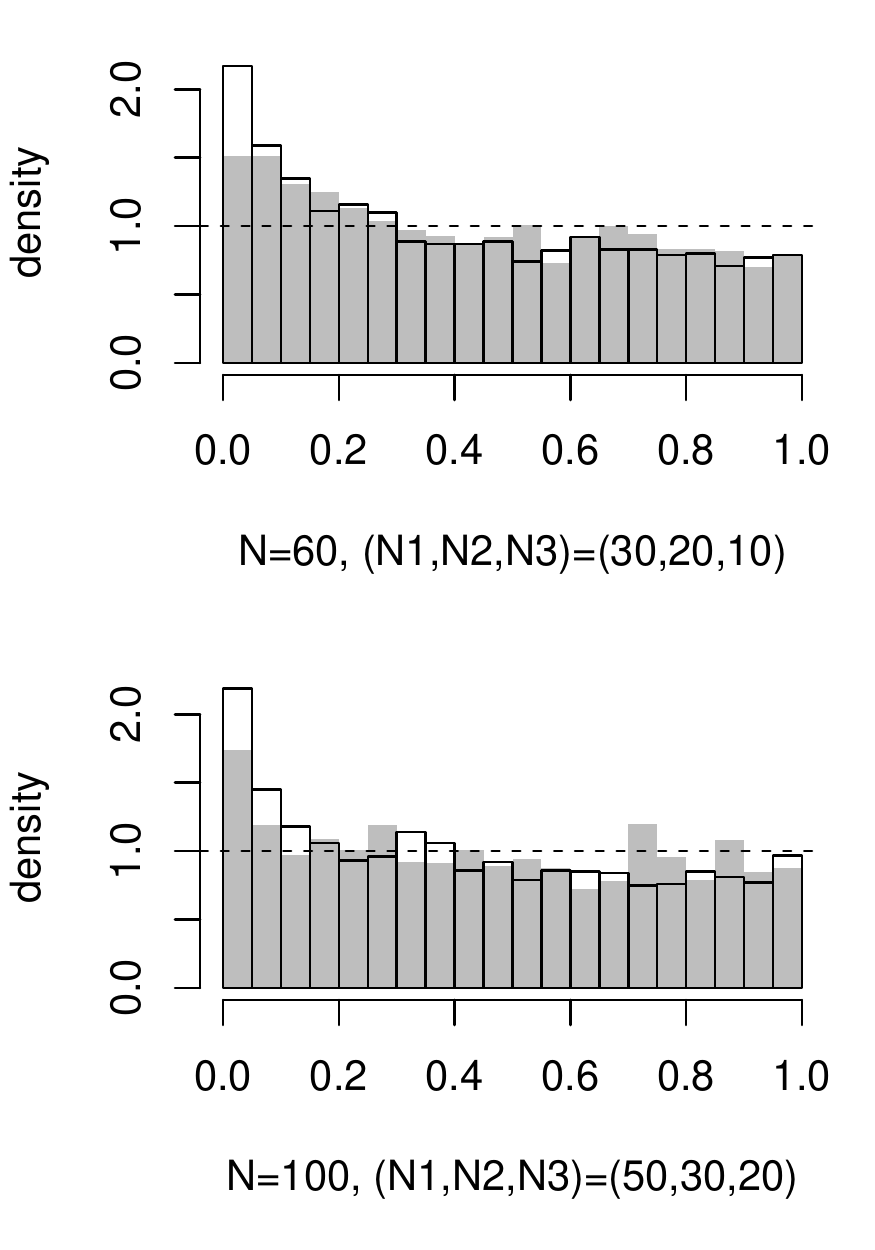}
\label{fig:subfigure3-S}}
\caption{Histograms of the $p$-values under $\HN$ based on the Fisher randomization tests using $X^2$, with grey histogram and white histograms for the first and second sub-cases.}
\label{fig:figure-S}
\end{figure}

\subsection{Type I error of the \FRT using $X^2$}
\label{subsec::x2-S}

We follow \S \ref{subsec::x2}, generate the same data as \S \ref{subsec::f-S}, and obtain the same conclusions about the Fisher randomization test using $X^2$, because Figures \ref{fig:figure_weight} and \ref{fig:figure_weight-S} exhibit the same pattern. All the Monte Carlo standard errors of the rejection rates below are close but no larger than $0.005.$

In Figure \ref{fig:subfigure1_weight-S}, for case (S1.1), the rejection rates are $0.034$ and $0.018$, and for case (S1.2), the rejection rates are $0.048$ and $0.029$, for sample sizes $N=45$ and $N=120$ respectively.
In Figure \ref{fig:subfigure2_weight-S}, for case (S2.1), the rejection rates are $0.032$ and $0.035$, and for case (S2.2), the rejection rates are $0.025$ and $0.036$, for sample sizes $N=45$ and $N=120$ respectively.
In Figure \ref{fig:subfigure3_weight-S}, for case (S3.1), the rejection rates are $0.060$ and $0.062$, and for case (S3.2), the rejection rates are $0.054$ and $0.044$, for sample sizes $N=45$ and $N=120$ respectively. 
This, coupled with Figure \ref{fig:figure-S}, agrees with our theory that the \FRT using $X^2$ can control type I error under $\HN$ better than using $F$.

\begin{figure}[t]
\centering
\subfigure[Balanced experiments, case S1]{%
\includegraphics[width=0.31\textwidth]{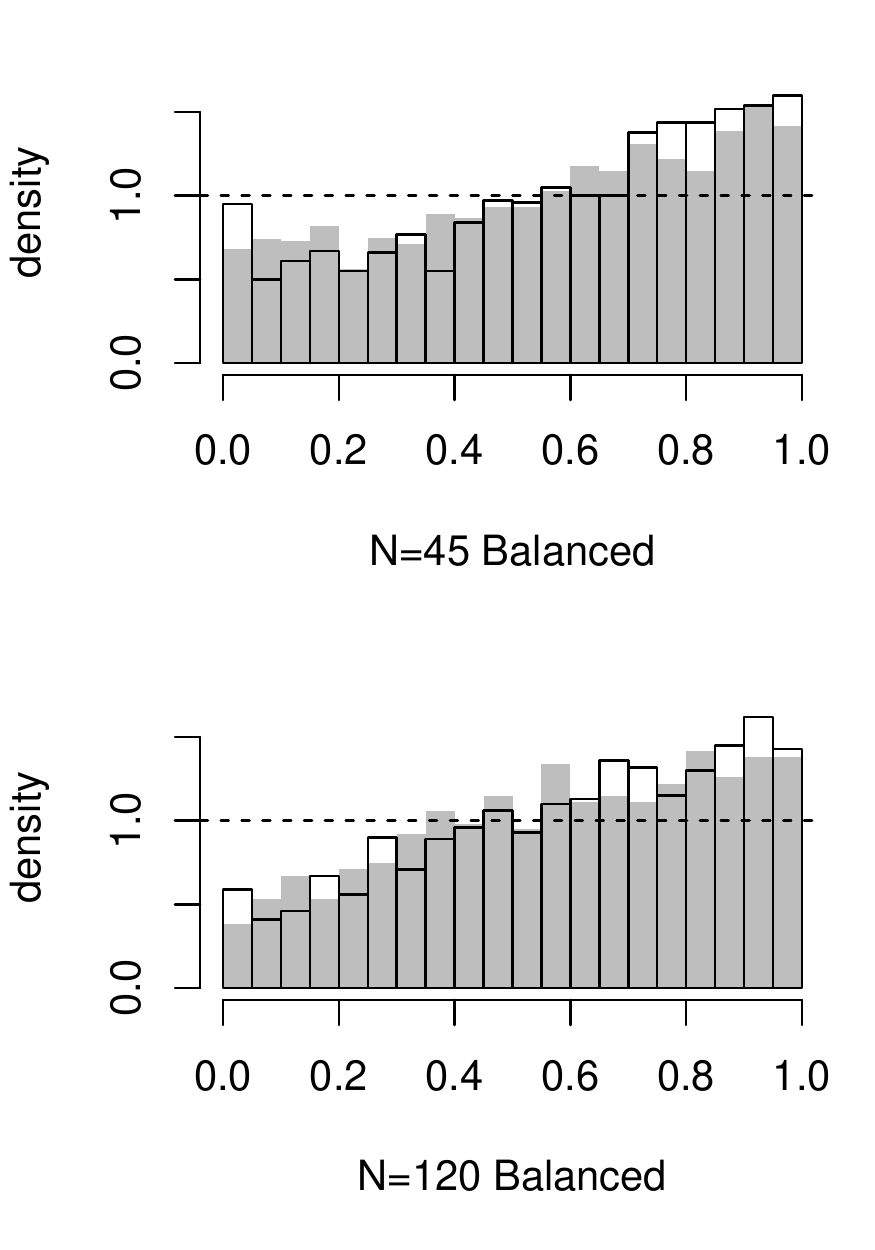}
\label{fig:subfigure1_weight-S}}
\quad
\subfigure[Unbalanced experiments, case S2.]{%
\includegraphics[width=0.31\textwidth]{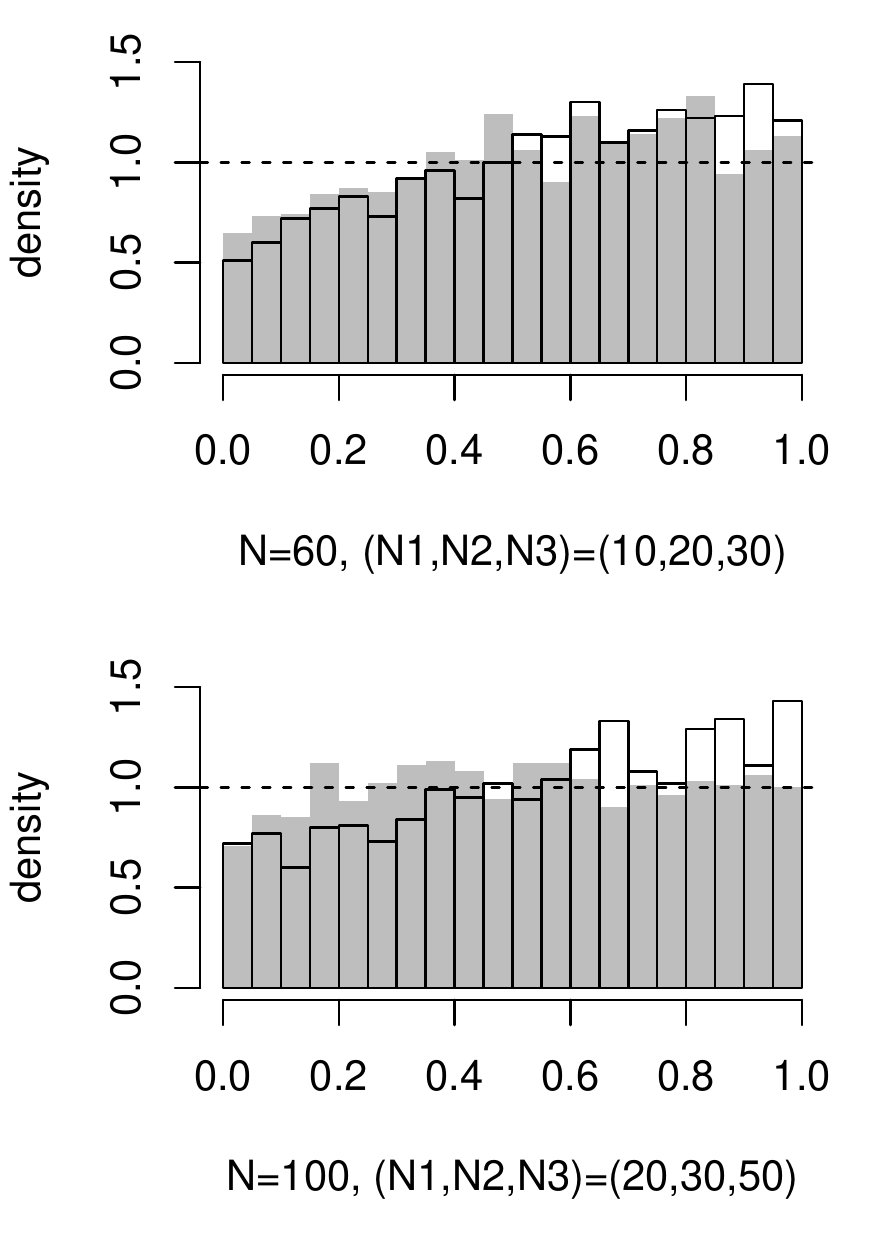}
\label{fig:subfigure2_weight-S}}
\subfigure[Unbalanced experiments, case S3]{%
\includegraphics[width=0.31\textwidth]{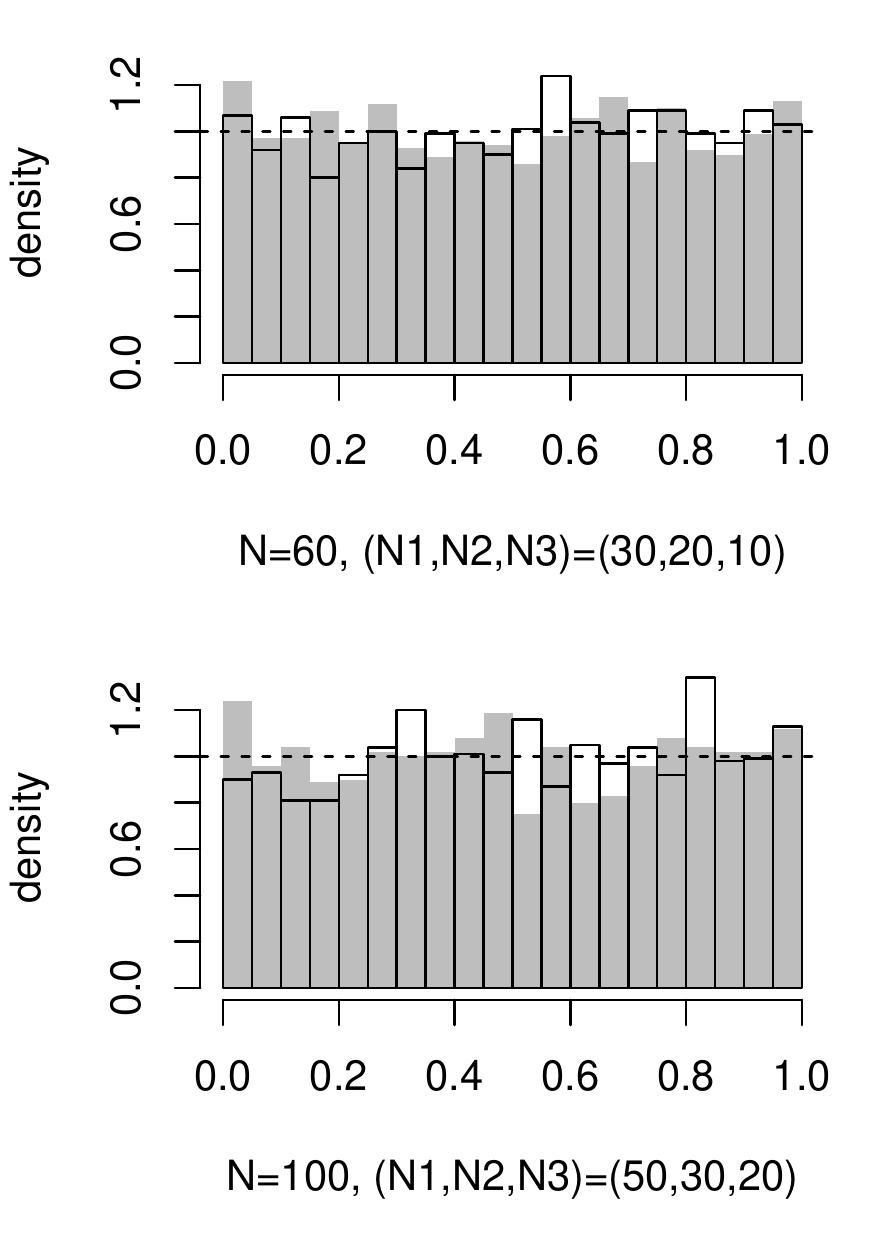}
\label{fig:subfigure3_weight-S}}
\caption{Histograms of the $p$-values under $\HN$ based on the Fisher randomization tests using $X^2$, with grey histogram and white histograms for the first and second sub-cases.}
\label{fig:figure_weight-S}
\end{figure}

\subsection{Power comparison of the Fisher randomization tests using $F$ and $X^2$}

We follow \S \ref{subsec::power} to compare the powers of the Fisher randomization tests using $F$ and $X^2$. We consider the following cases and summarize the results in Figure \ref{fig:figure_power-S}.  

Case S4.
For balanced experiments with sample sizes $N=30$ and $N=45$, we generate potential outcomes from $Y_i(1)\sim \mathcal{E}$, $Y_i(2)\sim \mathcal{E}/0.7$, $Y_i(3)\sim \mathcal{E} / 0.5$. These potential outcomes are independently generated, and shifted to have means $(0,0.5,1)$.

Case S5.
For unbalanced experiments with sample sizes $(N_1, N_2, N_3) = (10,20,30)$ and $(N_1, N_2, N_3) = (20,30,50)$, we first generate $Y_i(1)\sim \mathcal{E}$ and standardize them to have mean zero, and we then generate $Y_i(2) = 3Y_i(1)+1$ and $Y_i(3) = 5Y_i(1) + 2$. In this case, $p_1<p_2<p_3$ and $S^2_{\cdot}(1) < S^2_{\cdot}(2) < S^2_{\cdot}(3).$

Case S6.
For unbalanced experiments with sample sizes $(N_1, N_2, N_3) = (30,20,10)$ and $(N_1, N_2, N_3) = (50,30,20)$, we generate potential outcomes the same as the above case S5. In this case, $p_1>p_2>p_3$ and $S^2_{\cdot}(1) <S^2_{\cdot}(2) < S^2_{\cdot}(3).$

When the sample sizes are positively associated with the variances of the potential outcomes, the \FRT using $F$ has larger power than that using $X^2$. However, when the treatment groups are balanced or when the sample sizes are negatively associated with the variances of the potential outcomes, the \FRT using $F$ has smaller power than that using $X^2$. We report the rejection rates below with all the Monte Carlo standard errors no larger than $0.01.$

For case S4, the rejection rates using $X^2$ and $F$ are $0.087$ and $0.066$ with sample size $N=30$, and $0.207$ and $0.198$ with sample size $N=45$. For case S5, the powers using $X^2$ and $F$ are $0.044$ and $0.106$ with sample size $N=60$, and $0.293$ and $0.729$ with sample size $N=100$. For case S6, the rejection rates using $X^2$ and $F$ are $0.211$ and $0.037$ with sample size $N=60$, and $0.578$ and $0.274$ with sample size $N=100$.

\begin{figure}[t]
\centering
\subfigure[Balanced experiments, case S4]{%
\includegraphics[width=0.31\textwidth]{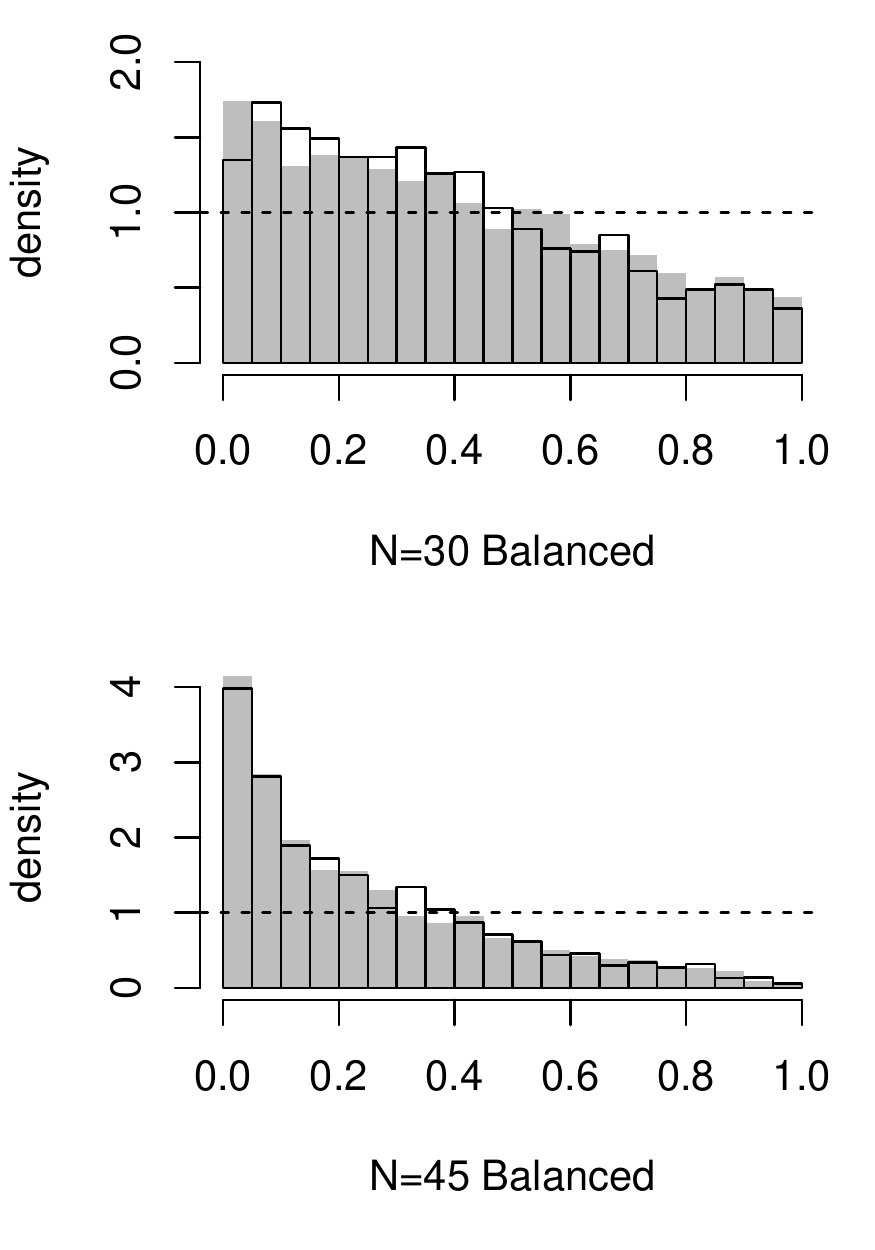}
\label{fig:subfigure1_power-S}}
\quad
\subfigure[Unbalanced experiments, case S5]{%
\includegraphics[width=0.31\textwidth]{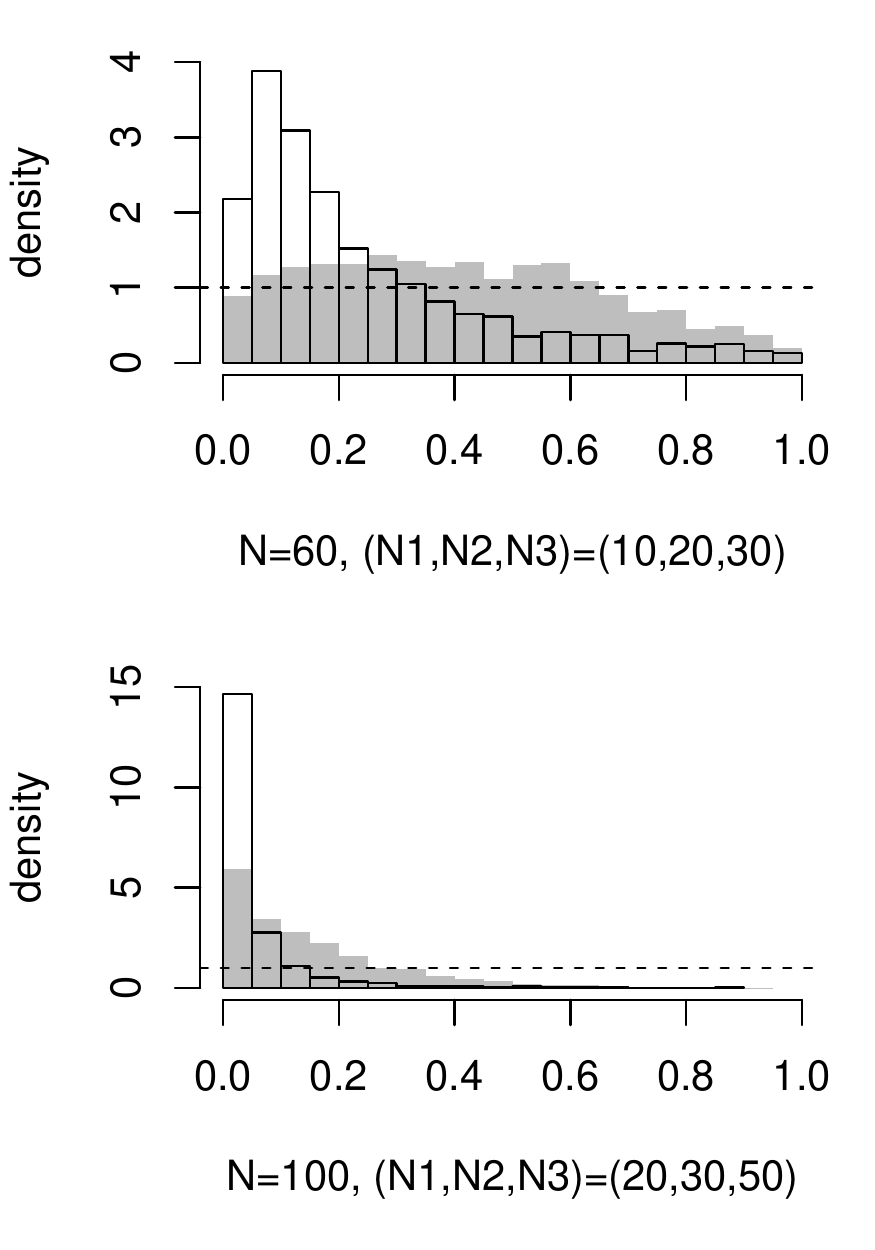}
\label{fig:subfigure2_power-S}}
\subfigure[Unbalanced experiments, case S6]{%
\includegraphics[width=0.31\textwidth]{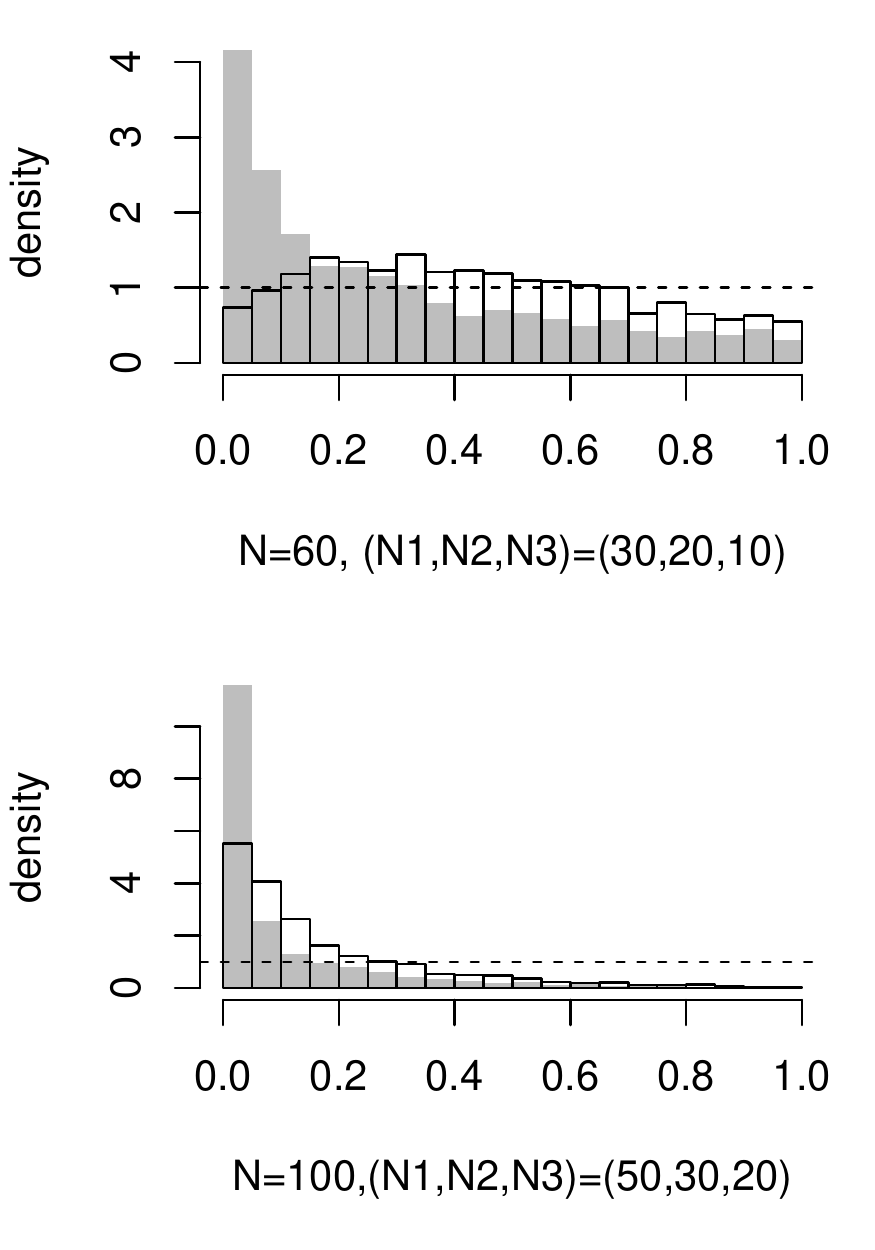}
\label{fig:subfigure3_power-S}}
\caption{Histograms of the $p$-values under alternative hypotheses based on the Fisher randomization tests using $F$ and $X^2$, with grey histograms for $X^2$ and white histograms for $F$.}
\label{fig:figure_power-S}
\end{figure}

\subsection{Finite sample evaluation of Corollary \ref{coro::equiv} with skewed outcomes}

We first generate log-normal potential outcomes $Y_i(1) \sim \exp\{  \mathcal{N}(0,1) \}$, $Y_i(2) \sim \exp\{  \mathcal{N}(1,1) \}$, and $Y_i(3) \sim \exp\{  \mathcal{N}(2,1) \}$, and then standard them to have equal finite population means $0$ and variances $1.$

Under $\HN$, the $p$-values of the \FRT using $F$ and $X^2$ are shown in Figure \ref{fig:homo}(a). With sample size $(N_1,N_2,N_3) = (10,10,10)$, the rejection rates using $X^2$ and $F$ are $0.012$ and $0.016$; with sample size $(10,15,20)$, the rejection rates are $0.016$ and $0.028$; with sample size $(20,15,10)$, the rejection rates are $0.006$ and $0.015$. The Monte Carlo standard errors are all close to but no larger than $0.004.$

%
%
%

Under alternative hypotheses, the $p$-values of the \FRT using $F$ and $X^2$ are shown in Figure \ref{fig:homo}(b). With sample size $(N_1,N_2,N_3) = (10,10,10)$, we shift the potential outcomes by constants $(0,0.5,1)$, and the rejection rates using $X^2$ and $F$ are $0.514$ and $0.512$; with sample size $(10,15,20)$, we shift the potential outcomes by constants $(0,0.2,0.5)$, and the rejection rates are $0.164$ and $0.215$; with sample size $(20,15,10)$, we shift the potential outcomes by constants $(0,0.2,0.5)$, and the rejection rates are $0.256$ and $0.179$. The Monte Carlo standard errors are all close but no larger than $0.011.$

In finite samples, we observe moderate difference between the Fisher randomization tests using $X^2$ and $F$ even with homoskedastic potential outcomes, although Corollary \ref{coro::equiv} ensures their asymptotic equivalence.

%
%
%

\begin{figure}[t]
\centering
\subfigure[$\HN$ holds]{%
\includegraphics[width=\textwidth]{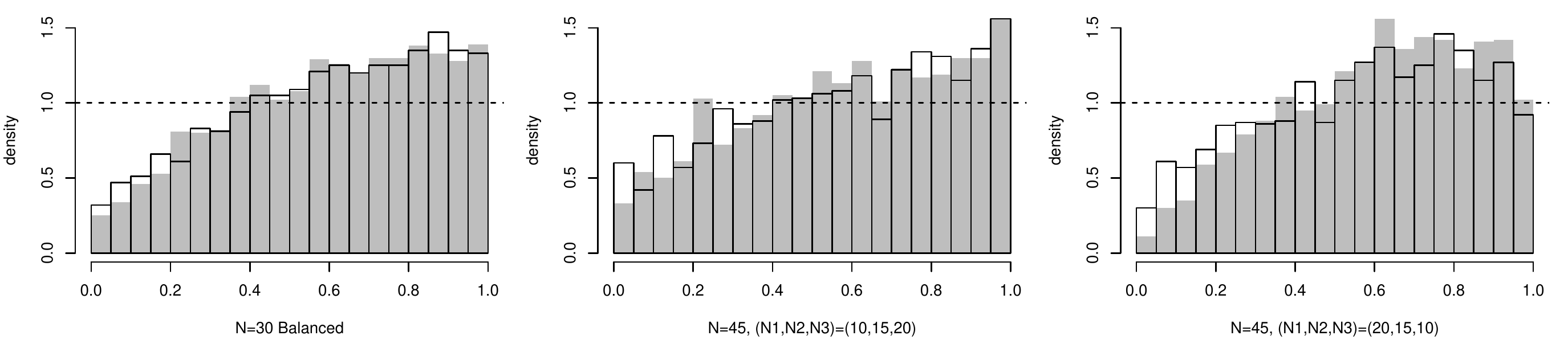}}
\quad
\subfigure[$\HN$ does not hold]{%
\includegraphics[width=\textwidth]{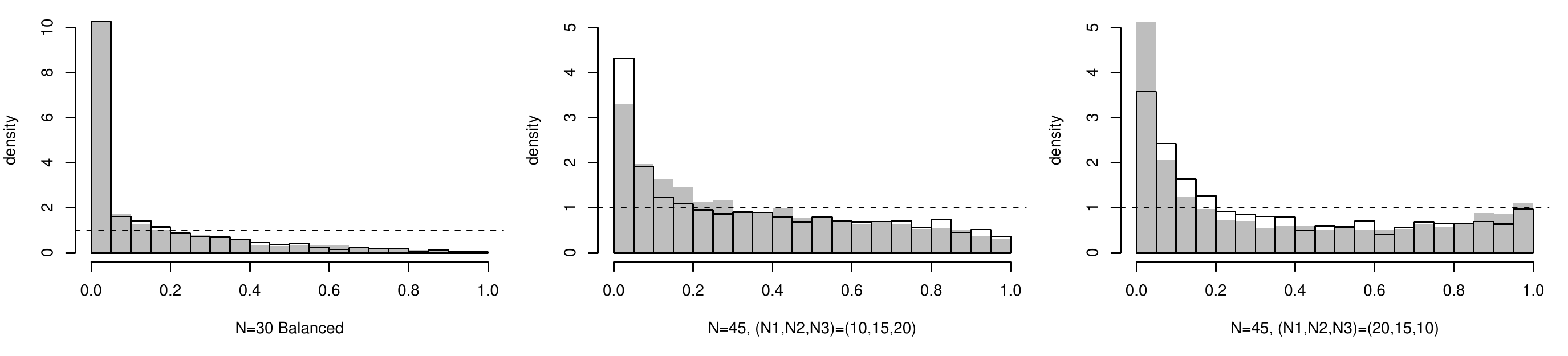}}
\caption{Histograms of the $p$-values under equal finite population variances based on the Fisher randomization tests using $F$ and $X^2$, with grey histograms for $X^2$ and white histograms for $F$.}
\label{fig:homo}
\end{figure}

\end{document}